\newtheorem{prop}{Proposition}[section]
\newtheorem{lemma}[prop]{Lemma}
\newtheorem*{theorem*}{Theorem}
\newtheorem{theorem}[prop]{Theorem}
\newtheorem{cor}[prop]{Corollary}
\newtheorem{assumption}[prop]{Assumption}
\newtheorem{remark}[prop]{Remark}
\newtheorem{defn}[prop]{Definition}
\begin{document}
\title{Higher rank stable pairs and virtual localization}
\author{Artan Sheshmani}
\maketitle
\begin{abstract}
We introduce a higher rank analog of the Pandharipande-Thomas theory  of stable pairs \cite{a17} on a Calabi-Yau threefold $X$.  More precisely,  we develop a moduli theory for frozen triples given by the data ${\mathcal O}_X^{\oplus r}(-n)\xrightarrow{\phi} F$ where $F$ is a sheaf of pure dimension $1$. The moduli space of such objects does not naturally determine an enumerative theory: that is, it does not naturally possess a perfect  
symmetric obstruction theory.  Instead, we build a zero-dimensional virtual fundamental class by hand, by truncating a deformation-obstruction theory coming from the moduli of objects in the derived  
category of $X$.  This yields the first deformation-theoretic construction of a higher-rank enumerative theory for Calabi-Yau threefolds.  We calculate this enumerative theory for local ${\mathbb  
P}^1$ using the Graber-Pandharipande \cite{a25} virtual localization technique. 
\end{abstract}
\section{Introduction}
Pandharipande and Thomas (PT for short) in \cite{a18, a17}  introduced the notion of stable pairs, given by a tuple $(F,s)$ where $s\in H^{0}(X,F)$ and $F$ is a pure sheaf with scheme-theoretically  one dimensional support (with fixed Hilbert polynomial and fixed second Chern character). 

Roughly speaking, these were sheaf theoretic objects representing a system of curves and points (on the curves) embedded in a fixed ambient Calabi-Yau threefold. The advantage of working with stable pairs was that they were realized as two term complexes $\mathcal{O}\xrightarrow{s}F$, and the authors proved that they were deforming as objects in the derived category. The robust yet powerful machinery of deformation theory of objects in the derived category then enabled the authors to construct well defined deformation-obstruction complexes over the moduli space of stable pairs and their associated virtual fundamental classes and compute their corresponding invariants. These enabled algebraic geometers to verify many more exciting results in the context of enumerative geometry of Calabi Yau manifolds, as well as mathematical string theory among some of which, one can mention the proof of MNOP conjecture \cite{a109} as well as the proof of KKV conjecture over K3 surfaces \cite{KKVPT}.

This article consists of two parts, Part I (theory) and Part II (Calculations).  In Part I, we develop a higher rank analog of the theory of stable pairs, in particular we produce a truncated well defined deformation-obstruction theory of correct amplitude for this higher rank theory which, based on a crucial assumption, provides a globally well defined virtual fundamental class for the theory. 

In Part II of the paper we apply our constructions to a toric variety, the local $\mathbb{P}^{1}$. In Part II we show that over the torus fixed loci of the moduli space, the higher rank objects of rank $r$ become isomorphic to $r$ copies of (\textit{twisted}) PT stable pairs. This fact  implies two outcomes; firstly we can show that over the torus fixed loci of the moduli space, the deformation-obstruction theory of higher rank objects becomes isomorphic to multiple copies of PT deformation-obstruction theory which is proven to be perfect of correct amplitude \cite{a17}. Therefore, this implies that the restriction of the truncated theory, constructed in Part I, to the torus fixed loci becomes perfect, which automatically implies that the assumption made in Part I, independently, holds true over the torus-fixed loci. Secondly via direct equivariant calculation of the theory of higher rank objects, and using the identification of their torus fixed loci, we verify at the end of the article that  their corresponding equivariant vertex matches precisely with a twisted version of an $r$-fold product of PT equivariant vertex. Below we elaborate further about the content of Part I and Part II;

\section*{Part I (Theory)} First we need the notion of triples; Let $X$ be a nonsingular Calabi-Yau 3-fold over $\mathbb{C}$ with $H^{1}(\mathcal{O}_{X})=0$ and with a fixed polarization $L$. A triple of type $(P_{E},P_{F})$ over $X$ is given by a triple $(E,F,\phi)$ where $E$ and $F$ are coherent sheaves with fixed Hilbert polynomials $P_{E}$ and $P_{F}$ respectively, $F$ is given as a pure sheaf with one dimensional support over $X$ and $\phi:E\rightarrow F$ is a holomorphic morphism. 

In this article however, in order to generalize the PT theory of stable pairs, we will use two special cases of the above triples; First we will introduce the notion of ``\textit{frozen triples}" of type $(r, P_F)$ which are given as a special case of the triples $(E,F,\phi)$ of type $(P_{E}, P_{F})$, in the sense that $E\cong \mathcal{O}_{X}^{\oplus r}(-n)$ and $F$ has fixed Hilbert polynomial $P_{F}$. In other words we ``\textit{freeze}" $E$ to be isomorphic to $\mathcal{O}_{X}^{\oplus r}(-n)$ for fixed choice of $r$, but the choice of this isomorphism is not fixed. Secondly, we will also work with closely related objects, called ``\textit{highly frozen triples}", given as quadruples $(E,F,\phi,\psi)$ where $E$, $F$ and $\phi$ have the same definition as above however, this time we have ``\textit{highly}" frozen the triple by fixing a choice of isomorphism $$\psi: E\xrightarrow{\cong}  \mathcal{O}_{X}^{\oplus r}(-n).$$ The stability condition for frozen and highly frozen triples is compatible with PT stability of stable pairs (c.f. Remark  \ref{PT-stab}). We call this stability condition $\tau'$-limit-stability or in short $\tau'$-stability. As it turns out (Lemma \ref{lemma2}), a frozen (respectively highly frozen) triple  $(E,F,\phi)$ of type $(r, P_F)$ is $\acute{\tau}$-stable if and only if the map $E\xrightarrow{\phi}F$ has zero dimensional cokernel.  

Naturally, the moduli spaces of $\tau'$-stable frozen and highly frozen triples are given as algebraic stacks. In fact the notion of $\tau'$-stability condition turns out to be a limiting GIT stability and thus we apply the results of Wandel \cite{a62} (Section 3) to prove in Remark \ref{forget-BG}, Theorem \ref{theorem8'} and Theorem \ref{pf-DM} that the moduli stack of $\tau'$-stable highly frozen triples of type $(r, P_F)$, which we denote by $\mathfrak{M}^{(r,P_{F})}_{\text{HFT}}(\tau')$, is given as a scheme which is a $\text{GL}_{r}(\mathbb{C})$-torsor over the (Artin) moduli stack of $\tau'$-stable frozen triples, denoted by  $\mathfrak{M}^{(r,P_{F})}_{\text{FT}}(\tau')$.

\begin{remark}
\emph{Our constructions in this article depend on the fixed choice of large enough integer $n$ for which $H^i(F(n))=0$ for all $i>0$. In fact our computation of the deformation-obstruction theories for frozen and highly frozen triples hold true over the sub-loci, $\mathfrak{N}^{(r,P_{F})}_{\text{HFT}}(\tau')\subset \mathfrak{M}^{(r,P_{F})}_{\text{HFT}}(\tau')$ and $\mathfrak{N}^{(r,P_{F})}_{\text{FT}}(\tau')\subset \mathfrak{M}^{(r,P_{F})}_{\text{FT}}(\tau')$ over which the higher cohomologies of $F(n)$ vanish (c.f. Definition \ref{open-sub}).}
\end{remark}

\begin{remark}
\emph{Note that, as the choice of $n, (r,P_{F})$, as well as the stability condition $\tau'$ throughout this article is fixed, later whenever needed, we will omit from our notation the fixed parameters in our construction and use $\mathfrak{N}_{\text{HFT}}, \mathfrak{M}_{\text{HFT}}$ and/or $\mathfrak{N}_{\text{FT}}, \mathfrak{M}_{\text{FT}}$ for our moduli spaces.}
\end{remark}
For a 3-fold $X$ the natural deformation-obstruction theories of stable frozen and highly frozen triples fail to provide well behaved complexes of correct amplitude over $\mathfrak{N}_{\text{FT}}$ and $\mathfrak{N}_{\text{HFT}}$ and they do not admit virtual fundamental cycles. Therefore, in order to find a remedy to this issue, we carry out a careful study of deformation theory of frozen and highly frozen triples viewed as objects in $\mathcal{D}^{b}(X)$ given by $I^{\bullet}: {\mathcal O}_X^{\oplus r}(-n)\rightarrow F$ and compute the fixed-determinant  obstruction theory of $I^{\bullet}$. It then turns out that, despite the fact that the object $I^{\bullet}$ (with the fixed determinant) in the derived category does not distinguish between a frozen or a highly frozen triple, \textit{its deformation space does}; In other words, it can be shown that given a frozen triple $(E,F,\phi)$ and a highly frozen triple $(E,F,\phi,\psi)$, both associated to the same object $I^{\bullet}\in \mathcal{D}^{b}(X)$, the space of flat deformations of $(E,F,\phi)$ and $I^{\bullet}$ are equally given by the group $\text{Ext}^{1}(I^{\bullet},I^{\bullet})_{0}$ while the space of flat deformations of $(E,F,\phi,\psi)$ is \textbf{not} equal to that of $I^{\bullet}$ given by $\text{Ext}^{1}(I^{\bullet},I^{\bullet})_{0}$. 

We use this fact to compute the deformation space of highly frozen triples (Proposition \ref{tripledef2}) and then, using a comparison between the two moduli spaces (Proposition \ref{tripledef}), obtain the deformation space of frozen triples (Theorem \ref{Gamma1}); Finally we prove that the deformations of frozen triples are equal to that of objects in the derived category (Theorem \ref{theorem10}). We summarize these results as follows;

\begin{theorem*} (Proposition \ref{tripledef2}, Proposition \ref{tripledef}, Theorem \ref{Gamma1}, Theorem \ref{theorem10}).
\begin{itemize}
\item Fix a map $f: S\rightarrow \mathfrak{N}_{\text{HFT}}$. Let $S'$ be a square-zero extension of $S$ with ideal $\mathcal{I}$. Let $\mathcal{D}ef_{S}(S',\mathfrak{N}_{\text{HFT}})$ denote the deformation space of the map $f$ obtained by the set of possible deformations, $f':S'\rightarrow \mathfrak{N}_{\text{FT}}$. The following statement is true:
\begin{equation*}
\mathcal{D}ef_{S}(S',\mathfrak{N}_{\text{HFT}})\cong \text{Hom}(I^{\bullet}_{S},F)\otimes \mathcal{I}
\end{equation*}
\item Fix a map $f: S\rightarrow \mathfrak{N}_{\text{FT}}$. Fixing $f$ corresponds to fixing an $S$-flat family of frozen triples given by $[\mathcal{O}_{X}(-n)\boxtimes \mathcal{M}_{S}\rightarrow \mathcal{F}]$ as in Definition \ref{defn4}. Let $S'$ be a square-zero extension of $S$ with ideal $\mathcal{I}$. Let $\mathcal{D}ef_{S}(S',\mathfrak{N}_{\text{FT}})$ denote the deformation space of the map $f$ obtained by the set of possible deformations, $f':S'\rightarrow \mathfrak{N}_{\text{FT}}$. The following statement is true:
\begin{equation*}
\mathcal{D}ef_{S}(S',\mathfrak{N}_{\text{FT}})\cong \text{Hom}(I^{\bullet}_{S},F)\otimes \mathcal{I}\slash\text{Im}\bigg((\mathfrak{g}l_{r}(\mathcal{O}_{S})\rightarrow \text{Hom}(I^{\bullet}_{S},\mathcal{F}))\otimes \mathcal{I}\bigg)
\end{equation*} 
\item Let $p \in \mathfrak{N}_{\text{FT}}$ be a point represented by the complex with fixed determinant $I^{\bullet}:=\mathcal{O}_{X}(-n)^{\oplus r}\xrightarrow{\phi} F$. The following is true: $$\text{T}_{p}\mathfrak{N}_{\text{FT}} \cong \text{Ext}^{1}(I^{\bullet},I^{\bullet})_{0}.$$
\end{itemize}
\end{theorem*}
We will then show that over $\mathfrak{N}_{\text{FT}}$, the deformation of the universal object in the derived category induces a 4-term deformation-obstruction complex of perfect amplitude $[-2,1]$:
\begin{theorem*}(Theorem \ref{reldef-f}). There exists a map in the derived category given by:$$R\pi_{\mathfrak{N}\ast}\left(R\mathscr{H}om(\mathbb{I}^{\bullet},\mathbb{I}^{\bullet})_{0}\otimes \pi_{X}^{*}\omega_{X}\right)[2]\xrightarrow{ob} \mathbb{L}^{\bullet}_{\mathfrak{N}_{\text{FT}}}.$$ 
After suitable truncations, there exists a 4 term complex $\mathbb{E}^{\bullet}$ of locally free sheaves , such that $\mathbb{E}^{\bullet\vee}$ is self-symmetric of amplitude $[-2,1]$ and there exists a map in the derived category:
\begin{equation}\label{defobs-froz}
\mathbb{E}^{\bullet\vee}\xrightarrow{ob^{t}} \mathbb{L}^{\bullet}_{\mathfrak{N}_{\text{FT}}},
\end{equation}
such that $h^{-1}(ob^{t})$ is surjective, and $h^{0}(ob^{t})$ and $h^{1}(ob^{t})$ are isomorphisms. Here $\mathbb{L}^{\bullet}_{\mathfrak{N}_{\text{FT}}}$ stands for the truncated cotangent complex of the Artin stack $\mathfrak{M}_{\text{FT}}$ which is of amplitude $[-1,1]$.
\end{theorem*}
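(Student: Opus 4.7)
The plan is to adapt the Illusie / Huybrechts--Thomas machinery for deformations of perfect complexes to the frozen-triple moduli, and then truncate. Let $\mathbb{I}^\bullet$ denote the universal two-term complex on $\mathfrak{M} \times X$, where $\mathfrak{M}$ abbreviates $\mathfrak{M}^{(P_F,r,n)}_{s,\operatorname{FT}}(\tau')$. I would first form the Atiyah class $\operatorname{At}(\mathbb{I}^\bullet) \in \operatorname{Ext}^1(\mathbb{I}^\bullet, \mathbb{I}^\bullet \otimes \mathbb{L}^\bullet_{\mathfrak{M}\times X})$, project to the $\pi_\mathfrak{M}^* \mathbb{L}^\bullet_\mathfrak{M}$-component to obtain the Kodaira--Spencer class, and pass to the trace-free part. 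Passage to the trace-free part is meaningful precisely because the determinant of $\mathbb{I}^\bullet$ is rigidified by the freezing $E \cong \mathcal{O}_X^{\oplus r}(-n)$. Applying $R\pi_{\mathfrak{M}*}(- \otimes \pi_X^* \omega_X)$ and invoking Grothendieck--Serre duality along the proper projection $\pi_\mathfrak{M}$ converts the Kodaira--Spencer map into the claimed morphism $ob$, with the shift $[2]$ arising from $\dim X - 1$.

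Next, I would set up the self-dual structure and perform the truncation. Since $\omega_X \cong \mathcal{O}_X$, Serre duality provides a canonical quasi-isomorphism between the complex $R\pi_{\mathfrak{M}*}(R\mathcal{H}om(\mathbb{I}^\bullet, \mathbb{I}^\bullet)_0 \otimes \pi_X^*\omega_X)[2]$ and a shift of its own derived dual, producing the required self-symmetry. Fibrewise, the source has nontrivial cohomology in $\operatorname{Ext}^i(I^\bullet, I^\bullet)_0$ for $i = 0, 1, 2, 3$, which after the $[2]$-shift occupies degrees $-2, -1, 0, 1$. Following the template of Huybrechts--Thomas, I would represent this perfect complex as $\mathbb{E}^{\bullet\vee}$ for a 4-term complex $\mathbb{E}^\bullet$ of locally free sheaves, resolving from both ends and splicing via the Serre-dual pairing so that the symmetry is preserved by construction of the resolution.

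Finally, I would verify the cohomology conditions on $ob^t \colon \mathbb{E}^{\bullet\vee} \to \mathbb{L}^\bullet_\mathfrak{M}$. The isomorphism $h^0(ob^t)$ is exactly the identification of $\operatorname{Ext}^1(I^\bullet, I^\bullet)_0$ with the deformation space $\mathcal{D}ef_S(S', \mathfrak{M}^{(P_F,r,n)}_{s,\operatorname{FT}}(\tau'))$ coming from the preceding theorem (Theorem \ref{Gamma1}, Proposition \ref{tripledef2}). The isomorphism $h^1(ob^t)$ matches $\operatorname{Hom}_0(I^\bullet, I^\bullet)$ with the infinitesimal $\operatorname{GL}_r$-automorphisms that give $\mathfrak{M}^{(P_F,r,n)}_{s,\operatorname{FT}}(\tau')$ its Artin-stack (rather than DM) structure --- this is the reason the cotangent complex has a nonzero $h^1$ in the first place. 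Surjectivity of $h^{-1}(ob^t)$ is the standard Illusie obstruction-theory statement that every genuine obstruction to a deformation factors through $\operatorname{Ext}^2(I^\bullet, I^\bullet)_0$. The main technical obstacle lies in the truncation: producing $\mathbb{E}^\bullet$ as \emph{locally free} with exactly the required amplitude while keeping the Serre-dual self-symmetry intact and the composition $\mathbb{E}^{\bullet\vee} \to R\pi_{\mathfrak{M}*}(\cdots)[2] \xrightarrow{ob} \mathbb{L}^\bullet_\mathfrak{M}$ well-defined in the derived category requires a careful two-sided resolution coupled with the CY3 pairing, and is where the bulk of the argument will lie.
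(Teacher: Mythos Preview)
Your proposal is essentially the same strategy as the paper's: Atiyah class $\to$ project to the $\pi_{\mathfrak{M}}^*\mathbb{L}^\bullet_{\mathfrak{M}}$-component $\to$ trace-free part $\to$ Grothendieck--Serre duality to get $ob$, then verify the $h^{-1},h^0,h^1$ conditions via the earlier deformation identifications, and finally replace the complex by a four-term locally free representative using the Calabi--Yau Serre-duality symmetry.

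There is one technical point you pass over that the paper treats with care. You write ``form the Atiyah class $\operatorname{At}(\mathbb{I}^\bullet)\in\operatorname{Ext}^1(\mathbb{I}^\bullet,\mathbb{I}^\bullet\otimes\mathbb{L}^\bullet_{\mathfrak{M}\times X})$'' directly on the Artin stack. The paper instead works on the smooth atlas $\mathfrak{S}$ (so that $\mathfrak{M}\cong[\mathfrak{S}/G]$ with $G=\operatorname{GL}_r(\mathbb{C})\times\operatorname{GL}(V_F)$), forms Illusie's Atiyah class there, and then argues that it \emph{descends} to $\mathfrak{M}$. Concretely, the pullback of $\mathbb{L}^\bullet_{\mathfrak{M}\times X}$ is identified with the shifted cone of $\mathbb{L}^\bullet_{X\times\mathfrak{S}}\to\mathfrak{g}^\vee\otimes\mathcal{O}$, so descent amounts to showing that the composite extension $\mathbb{I}^\bullet_{\mathfrak{S}}\to\mathfrak{g}^\vee\otimes\mathbb{I}^\bullet_{\mathfrak{S}}[1]$ is ($G$-equivariantly) split; the paper supplies this splitting via the action Lie-algebroid connection coming from the $G$-equivariance of $\mathbb{I}^\bullet$. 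Your sketch should flag this step, since on an Artin stack it is exactly the place where the degree-$1$ part of the cotangent complex enters and without it the map $ob$ is only defined upstairs on $\mathfrak{S}$.

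A minor remark: where you locate the ``bulk of the argument'' in the two-sided self-dual resolution, the paper dispatches this rather quickly by citing the resolution lemma of Pandharipande--Thomas (their Lemma~2.10) together with the relative-dimension-$3$ vanishing; the self-duality $\mathbb{E}^\bullet\cong\mathbb{E}^{\bullet\vee}[1]$ is then a separate one-line Grothendieck-duality computation rather than something built into the resolution.
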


Now we give a brief overview of the main result of the paper which is: \textit{to construct a virtual fundamental class from the non-perfect obstruction theory of stable frozen and highly frozen triples.} 

Let $\pi:\mathfrak{N}_{\text{HFT}}\rightarrow \mathfrak{N}_{\text{FT}}$ denote the natural forgetful map in Diagram \eqref{diagram}. The complex $\pi^{*}\mathbb{E}^{\bullet\vee}$ is perfect of amplitude $[-2,1]$ and the main obstacle in constructing a well-behaved deformation-obstruction theory over $\mathfrak{N}_{\text{HFT}}$ is to truncate $\pi^{*}\mathbb{E}^{\bullet\vee}$ in to a 2-term complex, and define (globally) a well-behaved deformation-obstrution theory of perfect amplitude $[-1,0]$. 

The simplest solution to this problem is to apply a cohomological truncation operation. Doing so requires obtaining a certain \textit{lifting} map from $g:\Omega_{\pi}\rightarrow \pi^{*}\mathbb{E}^{\bullet\vee}$ (c.f. Proposition \ref{existence}),  taking the mapping cone of this lift (and shifting by $-1$) and proving that the resulting complex satisfies the conditions of being a perfect deformation-obstruction theory for $\mathfrak{N}_{\text{HFT}}$. Here $\Omega_{\pi}$ is the relative cotangent sheaf of $\pi:\mathfrak{N}_{\text{HFT}}\rightarrow \mathfrak{N}_{\text{FT}}$. This procedure will remove the degree 1 term from the complex $\pi^{*}\mathbb{E}^{\bullet\vee}$. We also require to remove the degree $-2$ term of $\pi^{*}\mathbb{E}^{\bullet\vee}$. 

Then we use the self-symmetry of  $\mathbb{E}^{\bullet\vee}$ and apply the same procedure as above to the dual map $g^{\vee}:\pi^{*}\mathbb{E}^{\bullet}\rightarrow \text{T}_{\pi}$ (c.f. Theorem \ref{2step-trunc}) obtained from dualizing the map $g$. We finally obtain a local truncation of $\pi^{*}\mathbb{E}^{\bullet\vee}$ of perfect amplitude $[-1,0]$ which we denote by $\mathbb{G}^{\bullet}$. Knowing that $\pi^{*}\mathbb{E}^{\bullet\vee}$ is quasi-isomorphic to a 4 term complex of vector bundles:
\begin{equation}
\pi^{*}E^{-2}\rightarrow \pi^{*}E^{-1}\rightarrow \pi^{*}E^{0}\rightarrow \pi^{*}E^{1}
\end{equation}it can be seen from our construction (Lemma 8.18) that, locally, the complex $\mathbb{G}^{\bullet}$ is given by$$\pi^{*}E^{-2}\xrightarrow{d'} \pi^{*}E^{-1}\oplus T_{\pi}\rightarrow \pi^{*}E^{0}\oplus \Omega_{\pi}\xrightarrow{d} \pi^{*}E^{1}$$which is quasi-isomorphic to a 2-term complex of vector bundles
\begin{equation}\label{G-dot}
\text{Coker}(d')\rightarrow \text{Ker}(d)
\end{equation}
concentrated in degrees $-1$ and $0$.
The existence of the lifting map $g$ is guaranteed Zariski locally over $\mathfrak{N}_{\text{HFT}}$ but not globally. Hence our strategy is to locally truncate $\pi^{*}\mathbb{E}^{\bullet\vee}$ as explained above, construct the corresponding \textit{local} virtual cycles and, assuming that certain technical condition holds true (c.f. Assumption \ref{descent assumption}), glue the local cycles together to define a globally-defined virtual fundamental class. Our main summarizing theorems of this part are as follows:
\begin{theorem*}(Theorem \ref{2step-trunc} and Theorem \ref{2step-trunc2}). 
\begin{itemize}
\item Consider the 4-term deformation-obstruction theory $\mathbb{E}^{\bullet\vee}$ of perfect amplitude $[-2,1]$ over $\mathfrak{N}_{\text{FT}}$. Locally in the Zariski topology over $\mathfrak{N}_{\text{HFT}}$ there exists a perfect two-term deformation-obstruction theory of perfect amplitude $[-1,0]$ which is obtained from the suitable local truncation of the pullback $\pi^{*}\mathbb{E}^{\bullet\vee}$ via the map $\pi:\mathfrak{N}_{\text{HFT}}\rightarrow \mathfrak{N}_{\text{FT}}$.
\item Assuming the technical condition in Assumption \ref{descent assumption}, the local deformation-obstruction theory in Theorem \ref{2step-trunc} satisfies the conditions of being a semi perfect deformation-obstruction theory in the sense of \cite[Definition 3.1]{a70} and hence, it defines a globally well-behaved virtual fundamental class over $\mathfrak{N}_{\text{HFT}}$.
\end{itemize}
\end{theorem*}

\section*{Part II (Calculations)}

If $X$ has a torus action, then the moduli space of highly frozen triples inherits it (once we have chosen an equivariant structure of $\mathcal{O}_{X}(-n)$). Let $\mathcal{T}$ denote the torus action induced on $\mathfrak{N}_{\text{HFT}}$. It can be shown that a torus fixed point in the moduli scheme corresponds to a $\mathcal{T}$-equivariant highly frozen triple of type $(r, P_F)$ (Proposition \ref{equiv1}). The key observation is that a $\mathcal{T}$-equivariant highly frozen triple of rank $r$ is always written as a direct sum of $r$-copies of $\textbf{T}:=\mathbb{C}^{*3}$-equivariant PT stable pair:
 \begin{equation}\label{decompos-01}
I^{\bullet,\mathcal{T}}\cong\bigoplus_{i=1}^{r}\left(\mathcal{O}_{X}(-n)\rightarrow F_{i}\right)^{\textbf{T}}.
\end{equation}  

The consequence of identity \eqref{decompos-01} is of significant importance, since it enables one to immediately realize that the $\mathcal{T}$-fixed loci of $\mathfrak{N}_{\text{HFT}}$ are given as $r$-fold product of $\textbf{T}$-fixed loci of PT moduli space of stable pairs which are conjectured by Pandharipande and Thomas in \cite{a17} (Conjecture 2) to be nonsingular and compact. Therefore, it turns out that even though, originally, our moduli scheme is non-compact, its torus-fixed loci are compact, over which we are able to carry out localization computations. 

Moreover here we can see that Assumption \ref{descent assumption} for Theorem \ref{2step-trunc2} holds true independently over $\mathcal{T}$-fixed loci (c.f. Lemma \ref{auto}); the reason is due to the fact that over the $\mathcal{T}$-fixed loci of $\mathfrak{N}_{\text{HFT}}$ the restriction of the truncated deformation-obstruction complex in Theorem \ref{2step-trunc2} becomes isomorphic to $r$ copies of $\textbf{T}$-fixed PT deformation obstruction complex (c.f. Proposition \ref{r-PT}) which is perfect of amplitude $[-1,0]$ naturally (also look at Remark \ref{important}).

Finally we apply our results to threefolds given as toric varieties (Section \ref{sec15}) and directly compute the 1-legged equivariant Calabi-Yau vertex for when X is given as the total space of $\mathcal{O}_{\mathbb{P}^{1}}(-1)\oplus \mathcal{O}_{\mathbb{P}^{1}}(-1)\rightarrow \mathbb{P}^{1}$ (Equation \eqref{one-leg}); 
 \begin{equation}
 W^{\text{HFT}}_{1,\emptyset,\emptyset}=\left((1+q)^{\frac{(n+1)(s_{2}+s_{3})}{s_{1}}}\right)^{r}
 \end{equation}
 \section*{Acknowledgment}
The author thanks Sheldon Katz and Tom Nevins for their endless help and insight over years. Thanks to Richard Thomas for the invaluable help. Thanks to Emanuel 
Diaconescu for several discussions at RTG summer school at U.Penn in 2009. Thanks 
to Amin Gholampour for reading different versions of this article and many helpful discussions. Thanks to Kai Behrend for discussions in wallcrossing conference at UIUC and for pointing out the work in \cite{a43} on the construction of the virtual fundamental classes for Artin stacks.   
Thanks to Rahul Pandharipande, Davesh Maulik, Jim Bryan, Daniel Huybrechts, 
Yokinobu Toda, Tom Bridgeland, Dominic Joyce, Sam Payne, Julius Ross, Max Lieblich, Martin Olsson, Brent Doran, David Smyth, Francesco Noseda, Markus Perling for kindly answering questions about different aspects related to this project. Finally the author sincerely thanks the referee for many valuable comments on the content and presentation of this paper and specially, the author would like to thank the referee for pointing out the fact that, the existence of a 2-term locally free resolution of the truncated complex in the bottom row of diagram \eqref{q-isom} is the key to prove the existence of semi-perfect deformation-obstruction theory in the sense of Chang-Li, which enables one to construct the corresponding vector bundle stacks and do intersection theory. The author acknowledges partial support from NSF grants DMS 0244412, DMS 0555678 and DMS 08-38434 EMSW21-MCTP (R.E.G.S). 
\section{Part I (theory)}\label{chap1}
\begin{defn}(\textit{Holomorphic Triples})\label{defn3}
Let $X$ be a nonsingular projective Calabi-Yau 3-fold over $\mathbb{C}$ (i.e $K_{X}\cong \mathcal{O}_{X}$ and $\pi_{1}(X)=0$ which implies $H^{1}(\mathcal{O}_{X})=0$) with a fixed polarization $L$ . A holomorphic triple supported on $X$ is given by $(E,F,\phi)$ consisting of  a torsion free coherent sheaf $E$ and  a pure sheaf $F$ with one dimensional support, together with a holomorphic morphism $\phi:E \rightarrow F$.
A homomorphism of triples from $(E',F',\phi')$ to $(E,F,\phi)$ is a commutative diagram:
\begin{center}
\begin{tikzpicture}
back line/.style={densely dotted}, 
cross line/.style={preaction={draw=white, -, 
line width=6pt}}] 
\matrix (m) [matrix of math nodes, 
row sep=1em, column sep=3.5em, 
text height=1.5ex, 
text depth=0.25ex]{  
E'&F'\\
E&F\\};
\path[->]
(m-1-1) edge node [above] {$\phi'$} (m-1-2)
(m-1-1) edge (m-2-1)
(m-1-2) edge (m-2-2)
(m-2-1) edge node [above] {$\phi$} (m-2-2);
\end{tikzpicture}
\end{center}
Now let $S$ be a $\mathbb{C}$ scheme of finite type and let $\pi_{X}:X \times S \rightarrow X$ and $\pi_{S}: X\times S \rightarrow S$ be the corresponding projections. An \textit{$S$-flat family} of triples over $X$ is a triple $(\mathcal{E},\mathcal{F},\phi)$ consisting of a morphism of $\mathcal{O}_{X\times S}$ modules $\mathcal{E}\xrightarrow{\phi} \mathcal{F}$ such that $\mathcal{E}$ and $\mathcal{F}$ are flat over $S$ and for every point $s\in S$ the fiber  $(\mathcal{E},\mathcal{F},\phi)\mid_{s}$ is given by a holomorphic triple over $X$. Two $S$-flat families of triples $(\mathcal{E},\mathcal{F},\phi)$ and $(\mathcal{E}',\mathcal{F}',\phi')$ are isomorphic if there exists a commutative diagram  of the form:
\begin{center}
\begin{tikzpicture}
back line/.style={densely dotted}, 
cross line/.style={preaction={draw=white, -, 
line width=6pt}}] 
\matrix (m) [matrix of math nodes, 
row sep=1em, column sep=3.5em, 
text height=1.5ex, 
text depth=0.25ex]{ 
\acute{\mathcal{E}}_{1}&\acute{\mathcal{E}}_{2}\\
\mathcal{E}&\mathcal{F}\\};
\path[->]
(m-1-1) edge node [above] {$\phi'$} (m-1-2)
(m-1-1) edge node [left] {$\cong$} (m-2-1)
(m-1-2) edge node [right] {$\cong$} (m-2-2)
(m-2-1) edge node [above] {$\phi$} (m-2-2);
\end{tikzpicture}
\end{center}
\end{defn}
\begin{defn}(\textit{Type of a triple})
A triple of type $(P_{E},P_{F})$ is given by a triple $(E, F,\phi)$ such that   $P(E(m))=P_{E}$ and $P(F(m))=P_{F}$. Note that by the Grothendieck-Riemann-Roch theorem, fixing the Hilbert polynomial of $E, F$ is equivalent to fixing their Chern characters. 
\end{defn}
\begin{defn}(\textit{Frozen triples})\label{defn4}
Define a \textit{frozen triple} as a holomorphic triple in Definition \ref{defn3} such that $E\cong \mathcal{O}_{X}(-n)^{\oplus r}$ for some fixed large enough $n\in \mathbb{Z}$. For the frozen triples we simplify the notation for type of a frozen triple by writing ``\textit{of type} $(r, P_F)$". Now, an $S$-flat family of frozen-triples of type $(r,P_{F})$ is a triple $(\mathcal{E},\mathcal{F},\phi)$ consisting of a morphism of $\mathcal{O}_{X\times S}$ modules $\phi:\mathcal{E}\rightarrow \mathcal{F}$, such that $\mathcal{E}$ and $\mathcal{F}$ satisfy the condition of Definition \ref{defn3} and moreover $\mathcal{E}\cong \pi_{X}^{*}\mathcal{O}_{X}(-n)\otimes \pi_{S}^{*}\mathcal{M}_{S}$ where $\mathcal{M}_{S}$ is a vector bundle of rank $r$ on $S$. Two $S$-flat families of frozen-triples $(\mathcal{E},\mathcal{F},\phi)$ and $(\mathcal{E}',\mathcal{F}',\phi')$ are isomorphic if there exists a commutative diagram:
\begin{center}
\begin{tikzpicture}
back line/.style={densely dotted}, 
cross line/.style={preaction={draw=white, -, 
line width=6pt}}] 
\matrix (m) [matrix of math nodes, 
row sep=1em, column sep=3.5em, 
text height=1.5ex, 
text depth=0.25ex]{ 
\acute{\mathcal{E}}_{1}&\acute{\mathcal{E}}_{2}\\
\mathcal{E}&\mathcal{F}\\};
\path[->]
(m-1-1) edge node [above] {$\phi'$} (m-1-2)
(m-1-1) edge node [left] {$\cong$}(m-2-1)
(m-1-2) edge node [right] {$\cong$} (m-2-2)
(m-2-1) edge node [above] {$\phi$} (m-2-2);
\end{tikzpicture}
\end{center}
\end{defn}
\begin{defn}(\textit{Highly frozen triples})\label{defn7}
A \textit{highly frozen triple} of type $(r, P_{F})$ is a quadruple $(E,F,\phi,\psi)$ where $(E,F,\phi)$ is a frozen triple of type $(r, P_{F})$ as in Definition \ref{defn4} and $\psi:E\xrightarrow{\cong} \mathcal{O}_{X}(-n)^{\oplus r}$ is a fixed choice of isomorphism. A morphism between highly frozen triples $(E',F',\phi',\psi')$ and  $(E,F,\phi,\psi)$ is a morphism $F'\xrightarrow {\rho}F$ such that the following diagram is commutative.
\begin{center}
\begin{tikzpicture}
back line/.style={densely dotted}, 
cross line/.style={preaction={draw=white, -, 
line width=6pt}}] 
\matrix (m) [matrix of math nodes, 
row sep=1em, column sep=3.5em, 
text height=1.5ex, 
text depth=0.25ex]{ 
\mathcal{O}_{X}(-n)^{\oplus r}&E'&F'\\
\mathcal{O}_{X}(-n)^{\oplus r}&E&F\\};
\path[->]
(m-1-2) edge node [above] {$\phi'$} (m-1-3)
(m-1-1) edge node [left] {$id$}(m-2-1)
(m-1-1) edge node [above] {$\psi'^{-1}$}(m-1-2)
(m-1-2) edge (m-2-2)
(m-1-3) edge node [right] {$\rho$} (m-2-3)
(m-2-1) edge node [above] {$\psi^{-1}$}(m-2-2)
(m-2-2) edge node [above] {$\phi$} (m-2-3);
\end{tikzpicture}
\end{center}
An $S$-flat family of highly frozen triples is a quadruple $(\mathcal{E},\mathcal{F},\phi,\psi)$ consisting of a morphism of $\mathcal{O}_{X\times S}$ modules $\mathcal{E}\xrightarrow{\phi} \mathcal{F}$ such that $\mathcal{E}$ and $\mathcal{F}$ satisfy the condition of Definition \ref{defn3} and moreover $\psi: \mathcal{E}\xrightarrow{\cong}\pi_{X}^{*}\mathcal{O}_{X}(-n)\otimes \pi_{S}^{*}\mathcal{O}^{\oplus r}_{S}$ is a fixed choice of isomorphism. Two $S$-flat families of highly frozen-triples $(\mathcal{E},\mathcal{F},\phi,\psi)$ and $(\mathcal{E}',\mathcal{F}',\phi',\psi')$ are isomorphic if there exists a commutative diagram:
\begin{center}
\begin{tikzpicture}
back line/.style={densely dotted}, 
cross line/.style={preaction={draw=white, -, 
line width=6pt}}] 
\matrix (m) [matrix of math nodes, 
row sep=1em, column sep=3.5em, 
text height=1.5ex, 
text depth=0.25ex]{ 
\pi_{X}^{*}\mathcal{O}_{X}(-n)\otimes \pi_{S}^{*}\mathcal{O}^{\oplus r}_{S}&\mathcal{E}'&\mathcal{F}'\\
\pi_{X}^{*}\mathcal{O}_{X}(-n)\otimes \pi_{S}^{*}\mathcal{O}^{\oplus r}_{S}&\mathcal{E}&\mathcal{F}\\};
\path[->]
(m-1-2) edge node [above] {$\phi'$} (m-1-3)
(m-1-1) edge node [left] {$id$}(m-2-1)
(m-1-2) edge (m-2-2)
(m-1-1) edge node [above] {$\psi'^{-1}$} (m-1-2)
(m-2-1) edge node [above] {$\psi^{-1}$} (m-2-2)
(m-1-3) edge node [right] {$\cong$} (m-2-3)
(m-2-2) edge node [above] {$\phi$} (m-2-3);
\end{tikzpicture}
\end{center}
\end{defn}
\section*{Stability condition}\label{sec3} 
\begin{defn}(\textit{Stability of holomorphic triples})\label{defn8}
 Let $q_{1}(m)$ and $q_{2}(m)$ be positive rational polynomials of degree at least 2. A holomorphic triple $T=(E,F,\phi)$ of type $(P_{E},P_{F})$ is called $\acute{\tau}$-semistable (respectively, stable) if for any subsheaves $E'$ of $E$ and $F'$ of $F$ such that $0\neq E'\oplus F'\neq E\oplus F$ and $\phi(E')\subset F'$:
\begin{align}
&
q_{2}(m)\left(P_{E'}-rk(E')\left( \frac{P_{E}}{rk(E)}-\frac{q_{1}(m)}{rk(E)}\right)\right)\notag\\
&
+q_{1}(m)\left( P_{F'}-rk(F')\left( \frac{P_{F}}{rk(F)}+\frac{q_{2}(m)}{rk(F)}\right)\right)\leq0 / resp. <0.\notag\\
\end{align}
\end{defn}
Now we simplify Definition \ref{defn8} and obtain a tailored version of $\tau'$-stability condition for the frozen triples of type $(r,P_{F})$. Fix a frozen triple $(E,F,\phi)$ of type $(r, P_F)$ and let the subtriple $(E',F', \phi')$ be given as $(\mathcal{O}_{X}(-n)^{\oplus r},F',\phi')$ such that $F'\subset F$ and $\phi$ factors through $F'$, then the stability condition is written as:
\begin{align*}
&
q_{2}(m)\left(\cancel{P_{E'}}-\cancel{r\left(\frac{P_{E'}}{r}\right)}+q_{1}(m)\right)
+q_{1}(m)\left(P_{F'}-rk(F')\left(\frac{P_{F}}{rk(F)}+\frac{q_{2}(m)}{rk(F)}\right)\right)< 0.\notag\\
\end{align*}
Divinding by $q_{1}(m)$ and setting the new variable $q(m)=\frac{q_{2}(m)}{q_{1}(m)}$ as the ratio of the two, we obtain:
\begin{equation}\label{nonsense1}
\frac{P_{F'}}{rk(F')}+\frac{q(m)}{rk(F')}\leq\frac{P_{F}}{rk(F)}+\frac{q(m)}{rk(F)}.
\end{equation}
Which is similar to the notion of Le Potier's stability condition for coherent systems \cite{a13}.
 Now we state our conclusion as the following definition;
\begin{defn}(\textit{$\tau'$-stability for frozen triples})\label{defn13}
Let $q(m)$ be given by a polynomial with rational coefficients such that its leading coefficient is positive.  A frozen triple $(E,F,\phi)$ of type $(r, P_F)$ is $\acute{\tau}$-stable with respect to $q(m)$ if and only if the following conditions are satisfied;
\begin{enumerate}
\item For all proper nonzero subsheaves $G\subset F$ for which $\phi$ does not factor through $G$ we have: $$\frac{P_{G}}{rk(G)}<\frac{P_{F}}{rk(F)}+\frac{q(m)}{rk(F)}.$$
\item For all proper subsheaves, $G\subset F$ for which the map $\phi$ factors through:
\begin{align}\label{nonsense2}
&q(m)+\left(P_{G}-rk(G)\left(\frac{P_{F}}{rk(F)}+\frac{q(m)}{rk(F)}\right)\right)<0.\notag\\
\end{align}
\end{enumerate}
\end{defn}
\subsection*{Limit stability}
We show here that the $\tau'$-stability condition for frozen and highly frozen triples is asymptotically similar to stability of PT pairs \cite[Lemma 1.3]{a17}. 
\begin{lemma}(\textit{$\tau'$-limit-stability})\label{lemma2}
Fix $q(m)$ to be given as a polynomial of degree at least 2 with rational coefficients such that its leading coefficient is positive. A frozen triple $(E,F,\phi)$ of type $(r, P_F)$ is $\acute{\tau}$-limit-stable if and only if the map $E\xrightarrow{\phi}F$ has zero dimensional cokernel.
\end{lemma}
\begin{proof}  For simplicity, we use $\mathcal{O}_{X}^{\oplus r}(-n)$ instead of $E$. The exact sequence $0\rightarrow K\rightarrow \mathcal{O}_{X}(-n)^{\oplus r}\xrightarrow{\phi} F\rightarrow Q\rightarrow 0$ induces a short exact sequence $0\rightarrow \text{Im}(\phi)\rightarrow F\rightarrow Q \rightarrow 0$. Therefore one obtains the following commutative diagram of the triples:
\begin{center}
\begin{tikzpicture}
back line/.style={densely dotted}, 
cross line/.style={preaction={draw=white, -, 
line width=6pt}}] 
\matrix (m) [matrix of math nodes, 
row sep=1em, column sep=3.25em, 
text height=1.5ex, 
text depth=0.25ex]{ 
\mathcal{O}_{X}(-n)^{\oplus r}&\text{Im}(\phi)\\
\mathcal{O}_{X}(-n)^{\oplus r}&F\\};
\path[->]
(m-1-1) edge node [above] {$\phi$} (m-1-2)
(m-1-1) edge node [left] {$=$} (m-2-1);
\path[right hook->](m-1-2) edge (m-2-2);
\path[->](m-2-1) edge (m-2-2);
\end{tikzpicture}
\end{center}
Now we use the assumption for $q(m)$ and assume that $\mathcal{O}_{X}(-n)^{\oplus r}\xrightarrow{\phi}F$ is stable, therefore we obtain:
\begin{align*}
&
q(m)+\left(P_{\text{Im}(\phi)}-rk(\text{Im}(\phi))\cdot \left(\frac{P_{F}}{rk(F)}+\frac{q(m)}{rk(F)}\right)\right)<0.\notag\\
\end{align*}
In other words by rearrangement:
 \begin{equation}\label{gagool}
 q(m)\left(1-\frac{rk(\text{Im}(\phi)}{rk(F)}\right)<rk(\text{Im}(\phi))\frac{P_{F}}{rk(F)}-P_{\text{Im}(\phi)}.
 \end{equation}
 Consider the polynomials on both sides of inequality \eqref{gagool}. One finds that the right hand side of \eqref{gagool} is a polynomial in $m$ of degree at most 1. However by the assumed choice of $q(m)$, the left hand side of  inequality is given by a  polynomial of degree at least two with positive leading coefficient. Hence the left hand side becomes larger than the right hand side and the only way for the inequality to make sense is to have the left hand side to be equal to zero, i.e $rk(\text{Im}(\phi))=rk(F)$ and therefore $Q$ must be a zero dimensional sheaf. For the other direction: assume that $Q$ is  \textit{not} a zero dimensional sheaf and the triple is $\acute{\tau}$-limit-stable. Now by similar argument, since degree of $q(m)$ is chosen to be sufficiently large enough, $rk(\text{Im}(\phi))=rk(F)$ which contradicts the assumption of $Q$ not being zero \text{\text{dim}}ensional sheaf and this finishes the proof.\end{proof}
\begin{remark}\label{PT-stab}
(\textit{Relation to PT stability}) \emph{Note that setting $r=1$ and following the same steps to obtain Equation \eqref{nonsense1}, will enable us to see that the stability conditions defined in part (1) and (2) of Definition \ref{defn13} are the same as the ones used by Pandharipande-Thomas \cite[Equation (1.1) and (1.2) ]{a17}. Moreover, the result of Lemma \ref{lemma2} shows that the limit-stable frozen triples do satisfy the conditions, same as conditions (i) and (ii) of PT pair stability \cite[Lemma 1.3]{a17}. This result is certainly expected, due to the fact that PT stable pairs are special case of frozen triples where the value of  $r$ is set to be equal to $1$}.
\end{remark} 
Throughout the rest of this article by $\tau'$-stability we mean $\tau'$-limit-stability.

\section{Construction of moduli stacks}\label{chap2}
\begin{defn}(\textit{Moduli stack of highly frozen triples})\label{HFT-def}
Define $\mathfrak{M}^{(r,P_{F})}_{\text{HFT}}(\tau')$ to be the fibered category $\mathfrak{p}:\mathfrak{M}^{(r,P_{F})}_{\text{HFT}}(\tau') \rightarrow Sch/\mathbb{C}$ such that for all $S\in Sch/\mathbb{C}$ the objects in $\mathfrak{M}^{(r,P_{F})}_{\text{HFT}}(\tau')$ are $S$-flat families of $\tau'$-stable highly frozen triples of type $(r, P_F)$. Given a morphism of $\mathbb{C}$-schemes $g:S\rightarrow K$ and two families of highly frozen triples $T_{S}:=(\mathcal{E},\mathcal{F},\phi,\psi)_{S}$ and $\acute{T}_{K}:=(\mathcal{E}',\mathcal{F}',\phi',\psi')_{K}$ as in Definition \ref{defn7} (sub-index indicates the base parameter scheme over which the family is constructed), a morphism $T_{S}\rightarrow \acute{T}_{K}$ in $\mathfrak{M}^{(r,P_{F})}_{\text{HFT}}(\tau')$ is defined by an isomorphism: $$\nu_{S}: T_{S}\xrightarrow{\cong}(g\times \textbf{1}_{X})^{*}\acute{T}_{K}.$$
\end{defn}
\begin{defn}(\textit{Moduli stack of frozen triples})\label{defn141}
Define $\mathfrak{M}^{(r,P_{F})}_{\text{FT}}(\tau')$ to be the fibered category $\mathfrak{p}:\mathfrak{M}^{(r,P_{F})}_{\text{FT}}(\tau') \rightarrow Sch/\mathbb{C}$ such that for all $S\in Sch/\mathbb{C}$ the objects in $\mathfrak{M}^{(r,P_{F})}_{\text{FT}}(\tau')$ are $S$-flat families of $\tau'$-stable frozen triples of type $(r, P_F)$ as in Definition \ref{defn4}. Given a morphism of $\mathbb{C}$-schemes $g:S\rightarrow K$ and two families of frozen triples $T_{S}:=(\mathcal{E},\mathcal{F},\phi)_{S}$ and $\acute{T}_{K}:=(\mathcal{E}',\mathcal{F}',\phi')_{K}$ as in Definition \ref{defn4}, a morphism $T_{S}\rightarrow \acute{T}_{K}$ in $\mathfrak{M}^{(r,P_{F})}_{\text{FT}}(\tau')$ is defined by an isomorphism: $$\nu_{S}: T_{S}\xrightarrow{\cong}(g\times \textbf{1}_{X})^{*}\acute{T}_{K}.$$
\end{defn}
\begin{prop}
Use definitions \ref{HFT-def} and \ref{defn141}. The fibered categories $\mathfrak{M}^{(r,P_{F})}_{\text{HFT}}(\tau')$ and $\mathfrak{M}^{(r,P_{F})}_{\text{FT}}(\tau')$ are stacks.
\end{prop}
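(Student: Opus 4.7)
The plan is to verify the two standard stack axioms for each of the two fibered categories: (i) that the presheaf $\underline{\mathrm{Iso}}(T,T')$ of isomorphisms between two $S$-families is a sheaf in the fpqc topology, and (ii) that every fpqc descent datum is effective. Both will be reduced to Grothendieck's classical fpqc descent for quasi-coherent sheaves and morphisms between them, together with the fibrewise nature of the $\tau'$-stability condition given by Lemma \ref{lemma2}. The cleavage via $(g\times \mathbf{1}_X)^{*}$ is inherited from the corresponding pullback on $\mathcal{O}_{X\times S}$-modules, so the fibered-category axioms need no new argument beyond compatibility with the additional data $\phi$, $\psi$ and the frozen structure on $\mathcal{E}$.

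First I would check the sheaf-of-isomorphisms axiom. Fix an fpqc cover $\{S_i \to S\}$ and two $S$-flat families $T,T'$ of $\tau'$-stable highly frozen (respectively frozen) triples. By Definitions \ref{defn7} and \ref{defn141}, a morphism $T \to T'$ is an isomorphism of the underlying sheaves $\mathcal{E} \xrightarrow{\cong} \mathcal{E}'$, $\mathcal{F} \xrightarrow{\cong} \mathcal{F}'$ commuting with $\phi,\phi'$ and with the trivializations $\psi,\psi'$ (respectively with the given presentations $\mathcal{E} \cong \pi_X^{*}\mathcal{O}_X(-n)\otimes \pi_S^{*}\mathcal{M}_S$). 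Since $\mathcal{H}om$ of quasi-coherent sheaves is an fpqc sheaf and commutativity of a finite diagram of sheaf maps is a local condition, local isomorphisms agreeing on overlaps glue uniquely to a global one.

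Next I would verify effective descent. Given local families $T_i$ over $S_i$ with a cocycle of isomorphisms $\nu_{ij}$ over $S_{ij}$, fpqc descent for quasi-coherent sheaves supplies $\mathcal{O}_{X\times S}$-modules $\mathcal{E}$ and $\mathcal{F}$, and descent of morphisms assembles $\phi_i$ into a global $\phi: \mathcal{E} \to \mathcal{F}$. $S$-flatness of $\mathcal{E},\mathcal{F}$, purity and one-dimensionality of $\mathrm{Supp}(F)$, the Hilbert polynomial $P_F$, and (by Lemma \ref{lemma2}) the $\tau'$-stability condition that $\mathrm{coker}(\phi)$ is fibrewise zero-dimensional are all local on $S$ or indeed fibrewise, hence survive descent. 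Consequently the descended object is again a $\tau'$-stable (highly) frozen triple of type $(P_F,r)$.

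The main obstacle, and the only step not a mechanical invocation of classical descent, is producing the frozen structure after descent. For highly frozen triples the trivializations $\psi_i$ are, by the definition of a morphism of highly frozen triples, required to be compatible with the $\nu_{ij}$, so a global trivialization $\psi$ of $\mathcal{E}$ is manufactured automatically. For frozen triples one must descend the rank-$r$ bundle $\mathcal{M}_S$ itself: from $\mathcal{E}_i \cong \pi_X^{*}\mathcal{O}_X(-n)\otimes \pi_{S_i}^{*}\mathcal{M}_i$ one recovers $\mathcal{M}_i \cong (\pi_{S_i})_{*}\bigl(\mathcal{E}_i \otimes \pi_X^{*}\mathcal{O}_X(n)\bigr)$ via the projection formula together with $(\pi_{S_i})_{*}\mathcal{O}_{X\times S_i}\cong \mathcal{O}_{S_i}$ (here using that $X$ is projective and connected). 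The cocycle on $\{\mathcal{E}_i\}$ then induces a cocycle on the vector bundles $\{\mathcal{M}_i\}$, and fpqc descent for rank-$r$ vector bundles on $S$ yields $\mathcal{M}_S$ together with the required isomorphism $\mathcal{E} \cong \pi_X^{*}\mathcal{O}_X(-n)\otimes \pi_S^{*}\mathcal{M}_S$, completing the verification that both fibered categories are stacks.
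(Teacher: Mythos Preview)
Your proof is correct and follows the same approach as the paper, which simply records that the result is immediate from faithfully flat descent of coherent sheaves and their homomorphisms. Your version is more thorough in one respect: you explicitly verify that the descended $\mathcal{E}$ in the frozen-triple case again has the form $\pi_X^{*}\mathcal{O}_X(-n)\otimes\pi_S^{*}\mathcal{M}_S$ by reconstructing $\mathcal{M}_S$ via $(\pi_S)_{*}(\mathcal{E}\otimes\pi_X^{*}\mathcal{O}_X(n))$ and cohomology-and-base-change, a step the paper leaves implicit.
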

\begin{proof} This is immediate from faithfully flat descent of coherent sheaves and homomorphisms of coherent sheaves \cite[Theorem 4.23]{a67}.
\end{proof}
\begin{remark}\label{forget-BG} 
\emph{(Fibered diagram of moduli stacks) There exists a forgetful morphism $g':\mathfrak{M}^{(r,P_{F})}_{\text{FT}}(\tau')\rightarrow \mathcal{B}\text{GL}_{r}(\mathbb{C})$ which is given by taking a frozen triple $\{(E,F,\phi)\}\in \mathfrak{M}^{(r,P_{F})}_{\text{FT}}(\tau')$ to $\{E\}\in \mathcal{B}\text{GL}_{r}(\mathbb{C})$ by forgetting $F$ and $\phi$. Moreover, there exists a forgetful map $\mathfrak{M}^{(r,P_{F})}_{\text{HFT}}(\tau')\xrightarrow{\pi} \mathfrak{M}^{(r,P_{F})}_{\text{FT}}(\tau')$ which takes $(E,F,\phi,\psi)$ to $(E,F,\phi)$. Following the definition of fiber products of stacks, it can be shown that the natural diagram:}
\begin{equation}\label{diagram}
\begin{tikzpicture}
back line/.style={densely dotted}, 
cross line/.style={preaction={draw=white, -, 
line width=6pt}}] 
\matrix (m) [matrix of math nodes, 
row sep=1.5em
, column sep=3.25em, 
text height=1.5ex, 
text depth=0.25ex]{ 
\mathfrak{M}^{(r,P_{F})}_{\text{HFT}}(\tau')&pt=\text{Spec}(\mathbb{C})\\
\mathfrak{M}^{(r,P_{F})}_{\text{FT}}(\tau')&\mathcal{B}\text{GL}_{r}(\mathbb{C})=\left[\frac{\text{Spec}(\mathbb{C})}{\text{GL}_{r}(\mathbb{C})}\right]\\};
\path[->]
(m-1-1) edge node [above] {$g$} (m-1-2)
(m-1-1) edge node [left] {$\pi$} (m-2-1)
(m-1-2) edge node [right] {$i$}(m-2-2)
(m-2-1) edge node [above] {$\acute{g}$} (m-2-2);
\end{tikzpicture},
\end{equation} 
\emph{is a fibered diagram in the category of stacks. We leave the details to the interested reader to verify this.}
\end{remark}
 Next we show that the moduli stacks of frozen and highly frozen triples are given as algebraic stacks; The main requirement to construct the moduli stacks is the boundedness property for the family of triples of fixed given type. Wandel \cite[Definition 1.1]{a62}  studies the construction of the moduli space of objects $\phi:\mathcal{D}\rightarrow \mathcal{E}$, denoted as pairs. These objects are defined similar to triples in Definition \ref{defn3}. The author introduces the notion of Hilbert polynomial for a pair \cite[Definition 1.3]{a62} and $\delta$-semistability \cite[Definition 1.4]{a62} where $\delta$ is given as a stability parameter. The author then shows \cite[Proposition 2.1]{a62} that the family of $\delta$-stable pairs with Fixed Hilbert polynomial is bounded. 

Replacing $\delta$ with $q(m)$, it is easily seen that Wandel's notion of $\delta$-semistability is completely compatible with our notion of $\tau'$-semistability in Definition \ref{defn13} and therefore the family of highly frozen triples, $(E,F,\phi,\psi)$ of type $(r,P_{F})$ is bounded. Now the boundedness property implies that there exists an integer $n'$ (we call it $n'$ to distinguish it from the fixed integer $n$ appearing in the description of $E=\mathcal{O}^{r}_{X}(-n)$) for which, $E$ and $F$ satisfy regularity condition. In other words for all sheaves appearing in the family we have that $E(n')$ and $F(n')$ are globally generated. Therefore, there exists a surjective morphism $V_{F}\otimes \mathcal{O}_{X}(-n')\to F$ where $V_{F}:=H^{0}(F\otimes L^{n'})$ is a complex vector space of dimension $d_{V}=P_{F}(n')$. Now following the usual constructions,  one constructs the Quot-scheme $$\mathcal{Q}_{F}:=\text{Quot}_{P_{F}}(V_{F}\otimes \mathcal{O}_{X}(-n')),$$and so the scheme parameterizing the morphisms $\phi:\mathcal{O}_{X}^{\oplus r}(-n)\rightarrow F$ is given by a bundle $\mathcal{P}$ over $\mathcal{Q}_{F}$ whose fibers are given by $H^{0}(F(n))^{\oplus r}$. Now let $\mathfrak{S}(\tau')\subset \mathcal{P}$
be given as an open subscheme of $\mathcal{P}$ whose fibers parametrize $\tau'$-stable highly frozen triples $E\rightarrow F$.
$$
\begin{tikzpicture}
back line/.style={densely dotted}, 
cross line/.style={preaction={draw=white, -, 
line width=6pt}}] 
\matrix (m) [matrix of math nodes, 
row sep=1em, column sep=2.5em, 
text height=1.5ex, 
text depth=0.25ex]{ 
&V_{F}\otimes \mathcal{O}_{X}(-n')\\
\mathcal{O}_{X}(-n)^{\oplus r}&F\\};
\path[->]
(m-2-1) edge node [above] {$\phi$} (m-2-2);
\path[->>] (m-1-2) edge (m-2-2);
\end{tikzpicture}
$$ 
\begin{theorem}\label{theorem8'}
The following isomorphism of stacks holds true:
\begin{enumerate}
\item $$\mathfrak{M}^{(r,P_{F})}_{\text{HFT}}(\tau')\cong \left[\frac{\mathfrak{S}(\tau')}{\text{GL}(V_{F})}\right].$$
\item Moreover, there exists an isomorphism of stacks:$$\mathfrak{M}^{(r,P_{F})}_{\text{FT}}(\tau')\cong \left[\frac{\mathfrak{S}(\tau')}{\text{GL}_{r}(\mathbb{C})\times \text{GL}(V_{F})}\right].$$
\end{enumerate}
\end{theorem}
\begin{proof} Part (1): The proof is immediate, following the definition of algebraic quotient stacks. Part (2) follows from part (1) and the fact that $\mathfrak{M}^{(r,P_{F})}_{\text{HFT}}(\tau')$ is a $\text{GL}_{r}(\mathbb{C})$ torsor over $\mathfrak{M}^{(r,P_{F})}_{\text{FT}}(\tau')$ (c.f. Remark \ref{forget-BG}).
\end{proof}

Now we study automorphisms of highly frozen triples.
\begin{lemma}\label{pf-auto-1}
Given a $\tau'$-stable highly frozen triple $(E,F,\phi,\psi)$ as in Definition \ref{defn7} and a commutative diagram
\begin{center}
\begin{tikzpicture}
back line/.style={densely dotted}, 
cross line/.style={preaction={draw=white, -, 
line width=6pt}}] 
\matrix (m) [matrix of math nodes, 
row sep=1em
, column sep=3.5em, 
text height=1.5ex, 
text depth=0.25ex]{ 
\mathcal{O}_{X}(-n)^{\oplus r}&E&F\\
\mathcal{O}_{X}(-n)^{\oplus r}&E&F\\};
\path[->]
(m-1-2) edge node [above] {$\phi$} (m-1-3)
(m-1-1) edge node [left] {$id$}(m-2-1)
(m-1-1) edge node [above] {$\psi^{-1}$}(m-1-2)
(m-1-2) edge (m-2-2)
(m-1-3) edge node [right] {$\rho$} (m-2-3)
(m-2-1) edge node [above] {$\psi^{-1}$}(m-2-2)
(m-2-2) edge node [above] {$\phi$} (m-2-3);
\end{tikzpicture},
\end{center}
the map $\rho$ is given by $\text{id}_{F}$.
\end{lemma}
\begin{proof} Since $\psi$ is a choice of isomorphism, for simplicity replace $E$ by $\mathcal{O}_{X}(-n)^{\oplus r}$ and consider the diagram: 
\begin{equation}\label{auto-1}
\begin{tikzpicture}
back line/.style={densely dotted}, 
cross line/.style={preaction={draw=white, -, 
line width=6pt}}] 
\matrix (m) [matrix of math nodes, 
row sep=1em
, column sep=3.5em, 
text height=1.5ex, 
text depth=0.25ex]{ 
\mathcal{O}_{X}(-n)^{\oplus r}&F\\
\mathcal{O}_{X}(-n)^{\oplus r}&F\\};
\path[->]
(m-1-1) edge node [above] {$\phi$} (m-1-2)
(m-1-1) edge node [left] {$id$}(m-2-1)
(m-1-2) edge node [right] {$\rho$} (m-2-2)
(m-2-1) edge node [above] {$\phi$} (m-2-2);
\end{tikzpicture},
\end{equation}
Then, diagram \eqref{auto-1} induces:
\begin{center}
\begin{tikzpicture}
back line/.style={densely dotted}, 
cross line/.style={preaction={draw=white, -, 
line width=6pt}}] 
\matrix (m) [matrix of math nodes, 
row sep=1em
, column sep=3.5em, 
text height=1.5ex, 
text depth=0.25ex]{ 
\mathcal{O}_{X}(-n)^{\oplus r}&\text{Im}(\phi)&E\\
\mathcal{O}_{X}(-n)^{\oplus r}&\text{Im}(\phi)&E\\};
\path[right hook->] (m-1-2) edge (m-1-3);
\path[->] (m-1-1) edge node [left] {$id$}(m-2-1);
\path[->>] (m-1-1) edge node [above] {$\phi$} (m-1-2);
\path[->] (m-1-2) edge node [right] {$\rho\mid_{\text{Im}(\phi)}$} (m-2-2);
\path[->] (m-1-3) edge node [right] {$\rho$} (m-2-3);
\path[->>] (m-2-1) edge node [above] {$\phi$} (m-2-2);
\path[right hook->] (m-2-2) edge (m-2-3);
\end{tikzpicture}.
\end{center}
By commutativity of \eqref{auto-1}, $\rho\circ \phi=\phi\circ \text{id}=\phi$,  then $\rho(\text{Im}(\phi))=\text{Im}(\phi)$. Hence $\rho(\text{Im}(\phi))\subset \text{Im}(\phi)$. It follows that $\rho\mid_{\text{Im}(\phi)}=\text{id}_{\text{Im}(\phi)}$. Indeed if $s\in \text{Im}(\phi)(\mathcal{U})$, where $\mathcal{U}\subset X$ is affine open with $\tilde{s}\in \mathcal{O}_{X}(-n)^{\oplus r}(\mathcal{U})$ satisfying $\phi(\tilde{s})=s$, then $\rho(s)=\rho(\phi(\tilde{s}))=\phi(\text{id}(\tilde{s}))=\phi(\tilde{s})=s$. Now apply $\text{Hom}(-,F)$ to the short exact sequence $0\rightarrow \text{Im}(\phi)\rightarrow F\rightarrow Q\rightarrow 0$, where $Q$ denotes the corresponding cokernel. One obtains:$$0\rightarrow \text{Hom}(Q,F)\rightarrow \text{Hom}(F,F)\rightarrow \text{Hom}(\text{Im}(\phi),F).$$Since $(E,F,\phi,\psi)$ is $\tau'$-stable then by Lemma \ref{lemma2}, $Q$ is a sheaf with 0-dimensional support. Hence by purity of $F$, $\text{Hom}(Q,F)\cong 0$. Hence one obtains an injection $\text{Hom}(F,F)\hookrightarrow \text{Hom}(\text{Im}(\phi),F).$ Now $$\rho\mid_{\text{Im}(\phi)}=\text{id}_{\text{Im}(\phi)}=(\text{id}_{F})\mid_{\text{Im}(\phi)}.$$So $\rho=\text{id}_{F}$.
\end{proof}

\begin{theorem}\label{pf-DM}
The moduli stack $\mathfrak{M}^{(r,P_{F})}_{\text{HFT}}(\tau')$ is given by an algebraic scheme. 
\end{theorem}
\begin{proof} It is enough to show that for every $\mathbb{C}$-point $p:= \mathfrak{M}^{(r,P_{F})}_{\text{HFT}}(\tau')(\text{Spec}(\mathbb{C}))$ its stabilizer group $\textit{Stab}_{\mathfrak{M}^{(r,P_{F})}_{\text{HFT}}(\tau')}(p)$ is finite group with one element, given as identity.  Since the point $p$ is represented by a $\tau'$-stable highly frozen triple $(E,F,\phi,\psi)$, then $\textit{Stab}_{\mathfrak{M}^{(r,P_{F})}_{\text{HFT}}(\tau')}(p)$ is obtained by the automorphism group of $(E,F,\phi,\psi)$ which is given by the identity element, following Lemma \ref{pf-auto-1}

\end{proof}

\section{Deformation theory of of triples}\label{sec5}
As was shown, the construction of the moduli stack of stable frozen triples depends on choices of two fixed large enough integers $n\gg0$ and $n'\gg 0$. The first integer appears in the description $E:=\mathcal{O}_{X}(-n)^{\oplus r}$ and the latter is the one for which $F(n')$ is globally generated. The fact that the sheaf $F(n')$ is globally generated for large enough values of $n'$ does not a priori imply that $H^{i}(F(n))=0$ for all $i>0$ and our fixed choice of $n$. Therefore, we fix this issue by introducing the following definition:
\begin{defn}\label{open-sub}
Consider $\mathfrak{M}^{(r,P_{F})}_{\text{HFT}}(\tau')$ and $\mathfrak{M}^{(r,P_{F})}_{\text{FT}}(\tau')$ in definitions \ref{HFT-def} and \ref{defn141} respectively. Define the open substacks $\mathfrak{N}^{(r,P_{F})}_{\text{HFT}}(\tau')\subset \mathfrak{M}^{(r,P_{F})}_{\text{HFT}}(\tau')$ and $\mathfrak{N}^{(r,P_{F})}_{\text{FT}}(\tau')\subset \mathfrak{M}^{(r,P_{F})}_{\text{FT}}(\tau')$ such that:
\begin{enumerate}
\item $\mathfrak{N}^{(r,P_{F})}_{\text{HFT}}(\tau')=\{(E,F,\phi,\psi)\in \mathfrak{M}^{(r,P_{F})}_{\text{HFT}}(\tau')\mid H^{1}(F(n))=0\}$.
\item $\mathfrak{N}^{(r,P_{F})}_{\text{FT}}(\tau')=\{(E,F,\phi)\in \mathfrak{M}^{(r,P_{F})}_{\text{FT}}(\tau')\mid H^{1}(F(n))=0\}$.
\end{enumerate}
\end{defn}
From now on, we will omit ``$(r,P_{F})$" and ``$(\tau')$" in the notation used for our moduli stacks, in order to avoid notational complexity. Moreover, all our calculations are carried out over $\mathfrak{N}_{\text{HFT}}$ and $\mathfrak{N}_{\text{FT}}$ and the results in the following sections hold true for $\mathfrak{N}_{\text{HFT}}$ and $\mathfrak{N}_{\text{FT}}$ only. Also we assume that it is implicitly understood that in the following sections by the ``\textit{moduli stack of frozen or highly frozen triples}" we mean the open substack of the corresponding moduli stacks as  in Definition \ref{open-sub}. 

\subsection*{Deformation space of frozen and highly frozen triples} \label{sec6}
First we state the result of Illusie with no proof;
\begin{theorem}\label{Illusie1}
\cite[Section IV 3.2.12]{a29}. Given a graded morphism of graded modules $\mathcal{O}_{X\times S}(-n)^{\oplus r}\xrightarrow{\phi}\mathcal{F}$, there exists an element $$ob\in \text{Ext}^{2}_{\mathcal{D}^{b}(X\times S)}(\text{Cone}(\phi),\mathcal{I}\otimes \mathcal{F})$$ whose vanishing is necessary and sufficient in order to obtain nontrivial flat deformations of $\mathcal{O}_{X\times S}(-n)^{\oplus r}\xrightarrow{\phi}\mathcal{F}$. If $ob=0$ then the set of isomorphism classes of deformations forms a torsor under $\text{Ext}^{1}_{\mathcal{D}^{b}(X\times S)}(\text{Cone}(\phi),\mathcal{I}\otimes \mathcal{F})$.
\end{theorem} 
Now we apply Theorem \ref{Illusie1} to our case and obtain the following proposition;
\begin{prop}\label{tripledef2}
Given a $\tau'$-stable highly frozen triple $(E,F,\phi,\psi)$, represented by the complex $I^{\bullet}:\mathcal{O}_{X}(-n)^{\oplus r}\rightarrow F$, its space of infinitesimal deformations is given by $\text{Hom}(I^{\bullet},F)$.
\end{prop}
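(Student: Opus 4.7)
The key observation is that fixing the isomorphism $\psi$ means that the ``left'' sheaf $E=\mathcal{O}_X(-n)^{\oplus r}$ is rigid: an infinitesimal deformation of the highly frozen triple $(E,F,\phi,\psi)$ is precisely a first-order deformation of the pair $(F,\phi)$ with $E$ held fixed. My plan is therefore to compute that deformation space directly, and then match it piece-by-piece with $\operatorname{Hom}(I^\bullet,F)$ using the long exact sequence obtained from the distinguished triangle
\begin{equation*}
I^\bullet \to E \xrightarrow{\phi} F \to I^\bullet[1]
\end{equation*}
associated to the two-term complex $I^\bullet=[E\xrightarrow{\phi}F]$.

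First I would unwind Definition \ref{defn7} for $S'=\operatorname{Spec}(\mathbb{C}[\epsilon]/\epsilon^2)$: such a deformation is the data of a $\mathbb{C}[\epsilon]$-flat lift $\widetilde F$ of $F$ together with a morphism $\widetilde\phi:\pi_X^*\mathcal{O}_X(-n)^{\oplus r}\to \widetilde F$ reducing to $\phi$, modulo isomorphisms of $\widetilde F$ reducing to the identity. Standard first-order deformation theory then provides three inputs: (i) classes $[\widetilde F]\in \operatorname{Ext}^1(F,F)$ classify flat lifts of $F$; (ii) given $\widetilde F$, the obstruction to lifting $\phi$ to $\widetilde \phi$ is the image $\phi^*[\widetilde F]\in \operatorname{Ext}^1(E,F)$; and (iii) when a lift exists, the set of lifts is a torsor under $\operatorname{Hom}(E,F)$, on which the automorphisms of the trivial deformation of $F$, namely $\operatorname{End}(F)$, act through $\phi^*:\operatorname{End}(F)\to \operatorname{Hom}(E,F)$. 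Packaging these steps, the deformation space $D$ fits into a canonical short exact sequence
\begin{equation*}
0 \to \operatorname{Hom}(E,F)/\phi^*\operatorname{End}(F) \to D \to \ker\bigl(\phi^*:\operatorname{Ext}^1(F,F)\to \operatorname{Ext}^1(E,F)\bigr) \to 0.
\end{equation*}

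Second, I would apply $\operatorname{RHom}(-,F)$ to the distinguished triangle above and extract the long exact sequence
\begin{equation*}
\operatorname{Hom}(F,F)\xrightarrow{\phi^*}\operatorname{Hom}(E,F)\to \operatorname{Hom}(I^\bullet,F)\to \operatorname{Ext}^1(F,F)\xrightarrow{\phi^*}\operatorname{Ext}^1(E,F),
\end{equation*}
which exhibits $\operatorname{Hom}(I^\bullet,F)$ as an extension of $\ker\bigl(\phi^*:\operatorname{Ext}^1(F,F)\to\operatorname{Ext}^1(E,F)\bigr)$ by the cokernel $\operatorname{Hom}(E,F)/\phi^*\operatorname{End}(F)$, i.e.\ by exactly the same two pieces appearing in the sequence for $D$. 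The comparison map $D\to \operatorname{Hom}(I^\bullet,F)$ is then defined by sending a deformation $(\widetilde F,\widetilde \phi)$ to the morphism of two-term complexes $I^\bullet \to F$ it induces by combining the extension class of $\widetilde F$ with the difference $\widetilde\phi-(\phi\otimes \operatorname{id})$, which glue through the connecting homomorphism of the long exact sequence.

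The main obstacle is the verification that this map $D\to \operatorname{Hom}(I^\bullet,F)$ is well-defined and natural: I must check that it does not depend on the choice of lift $\widetilde\phi$, that it descends to equivalence classes of deformations (killing precisely the image of $\phi^*:\operatorname{End}(F)\to \operatorname{Hom}(E,F)$), and that it is compatible with both the connecting map of the $\operatorname{RHom}$ long exact sequence and the classifying map $D\to \operatorname{Ext}^1(F,F)$. Once these compatibilities are in hand, the five lemma applied to the two exact sequences forces $D\xrightarrow{\cong}\operatorname{Hom}(I^\bullet,F)$, which is the claim of the proposition. The relative version over an arbitrary base $S$ and for square-zero extensions with ideal $\mathcal{I}$ then follows by tensoring with $\mathcal{I}$ and replacing $I^\bullet$ with the universal complex $I^\bullet_S$.
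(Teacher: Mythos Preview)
Your argument is correct, and it takes a genuinely different route from the paper's proof. The paper does not decompose the deformation problem into ``deform $F$, then lift $\phi$, then quotient by automorphisms''; instead it invokes Illusie's general machinery for deformations of morphisms of graded algebras (the result recorded as Theorem~\ref{Illusie1}). Concretely, the paper promotes $\mathcal{O}_{X\times S}(-n)^{\oplus r}\xrightarrow{\phi}\mathcal F$ to a morphism of graded $\mathcal O_{X\times S}$-algebras and reads off directly from Illusie that obstructions lie in $\operatorname{Ext}^2(\operatorname{Cone}(\phi),\mathcal F\otimes\mathcal I)$ and deformations form a torsor under $\operatorname{Ext}^1(\operatorname{Cone}(\phi),\mathcal F\otimes\mathcal I)$; since $\operatorname{Cone}(\phi)=I^\bullet[1]$, the latter is $\operatorname{Hom}(I^\bullet,\mathcal F)\otimes\mathcal I$.

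What each approach buys: your hands-on computation via the five lemma is more elementary and makes the filtration of $\operatorname{Hom}(I^\bullet,F)$ by ``deformations of $F$'' and ``deformations of $\phi$'' completely explicit, which is pedagogically valuable and avoids appealing to \cite{a29}. The paper's approach, on the other hand, delivers the obstruction space $\operatorname{Ext}^1(I^\bullet,F)$ in the same breath, which matters because this obstruction class is precisely what feeds into the construction of the map $R\pi_{\mathfrak H\ast}(R\mathscr{H}om(\mathbb F,\mathbb I^\bullet)\otimes\omega_{\pi_{\mathfrak H}}[3])\to\mathbb L^\bullet_{\mathfrak H}$ in Theorem~\ref{def-obs1} and hence into the perfect obstruction theory. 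Your argument identifies only the tangent space; to recover the obstruction statement you would need to run the analogous analysis one degree higher, which is straightforward but not written.
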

\begin{proof} A square zero embedding $S\hookrightarrow S'$ is a closed immersion whose defining ideal $\mathcal{I}$ satisfies $\mathcal{I}^{2}=0$. Note that here, $\text{Cone}(\phi)=I^{\bullet}_{S}[1]$. Now apply Illusie's result and see that the obstructions $ob:\text{Cone}(\phi)\rightarrow \mathcal{I}\otimes \mathcal{F}$ are given by the composite morphism \cite[Section 3.2.14.3]{a29}:
\begin{align} \label{Illusie2}
 &\text{Cone}(\phi)\rightarrow k^{1}\left(L_{\mathcal{O}_{X\times S}\oplus \mathcal{O}_{X\times S}(-n)^{\oplus r}/\mathcal{O}_{X}}\otimes \mathcal{F}[1]\right)\notag\\
 &
 \rightarrow k^{1}\left(\mathcal{I}\otimes (\mathcal{O}_{X\times S}(-n)\oplus \mathcal{O}_{X\times S}(-n)^{\oplus r})\otimes (\mathcal{O}_{X\times S}\oplus \mathcal{F})\right)\rightarrow \mathcal{I}\otimes \mathcal{F}[2].\notag\\
 \end{align}
 which induce the composite morphism:
 \begin{equation}
 \text{Cone}(\phi)\rightarrow L_{\mathcal{O}_{X\times S}/\mathcal{O}_{X}}\otimes \mathcal{F}[1]\rightarrow \mathcal{I} \otimes \mathcal{F}[2],
 \end{equation}
the set of such composite homomorphisms is given by $\text{Hom}(I^{\bullet}_{S}[1],\mathcal{I}\otimes \mathcal{F}[2])\cong \text{Ext}^{1}(I^{\bullet}_{S},\mathcal{I}\otimes \mathcal{F})\cong \text{Ext}^{1}(I^{\bullet}_{S},\mathcal{F})\otimes \mathcal{I}$, similarly if $ob=0$, then the set of isomorphism classes of deformations of highly frozen triples makes a torsor under $$\text{Ext}^{1}(I^{\bullet}_{S}[1],\mathcal{I}\otimes \mathcal{F})\cong \text{Hom}(I^{\bullet}_{S},\mathcal{I}\otimes \mathcal{F})\cong  \text{Hom}(I^{\bullet}_{S},\mathcal{F})\otimes \mathcal{I}.$$
\end{proof} 
 \begin{prop}\label{tripledef}
 The tangent space of $\mathfrak{N}_{FT}$ at a point $\{p\}:(E,F,\phi)$ represented by a complex $I^{\bullet}:=[E\rightarrow F]$ satisfies the following identity:
 \begin{equation}
 \text{T}_{p}\mathfrak{N}_{\text{FT}}\cong \text{Hom}(I^{\bullet},F)/\text{Im}(\mathfrak{g}l_{r}(\mathbb{C})\rightarrow \text{Hom}(I^{\bullet},F)).
 \end{equation}
 or equivalently; 
 \begin{equation}
 \text{T}_{p}\mathfrak{N}_{\text{FT}}\cong \text{Coker}\left[\mathfrak{g}l_{r}(\mathbb{C})\rightarrow \text{Hom}(I^{\bullet},F)\right].
 \end{equation}
\end{prop} 
\begin{proof} Since our analysis is over a point in the moduli stack, we assume that $S=Spec(\mathbb{C})$ and $S'$ is a square-zero extension over $S$. Following Remark \ref{forget-BG} we have that, $\mathfrak{N}_{\text{HFT}}$ is a $\text{G}l_{r}(\mathbb{C})$-torsor over $\mathfrak{N}_{\text{FT}}$. Therefore, in the level of tangent spaces we obtain:
\begin{equation}
\mathfrak{g}l_{r}(\mathbb{C})\rightarrow T_{p}\mathfrak{N}_{\text{HFT}}\rightarrow \text{T}_{p}\mathfrak{N}_{\text{FT}}\rightarrow 0,
\end{equation} 
hence it is immediately seen that $$T_{p}\mathfrak{N}_{\text{FT}}\cong \text{Coker}[\mathfrak{g}l_{r}(\mathbb{C})\rightarrow T_{p}\mathfrak{N}_{\text{HFT}}].$$ Now use the fact that, $T_{p}\mathfrak{N}_{\text{HFT}} \cong \text{Hom}(I^{\bullet},\mathcal{F})$ by Proposition \ref{tripledef2}.
\end{proof}
A similar analysis can be carried out when $S$ is given by an affine scheme and $S'$ is an $S$-scheme. We state this result without further proof;
\begin{theorem}\label{Gamma1}
Fix a map $f: S\rightarrow \mathfrak{N}_{\text{FT}}$. Fixing $f$ corresponds to fixing an $S$-flat family of frozen triples given by $[\mathcal{O}_{X}(-n)\boxtimes \mathcal{M}_{S}\rightarrow \mathcal{F}]$ as in Definition \ref{defn4}. Let $S'$ be a square-zero extension of $S$ with ideal $\mathcal{I}$. Let $\mathcal{D}ef_{S}(S',\mathfrak{N}_{\text{FT}})$ denote the deformation space of the map $f$ obtained by the set of possible deformations, $f':S'\rightarrow \mathfrak{N}_{\text{FT}}$. The following statement is true:
\begin{equation}
\mathcal{D}ef_{S}(S',\mathfrak{N}_{\text{FT}})\cong \text{Hom}(I^{\bullet}_{S},F)\otimes \mathcal{I}\slash\text{Im}\bigg((\mathfrak{g}l_{r}(\mathcal{O}_{S})\rightarrow \text{Hom}(I^{\bullet}_{S},\mathcal{F}))\otimes \mathcal{I}\bigg)
\end{equation} 
\end{theorem}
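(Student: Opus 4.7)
The plan is to run the same argument as in Proposition~\ref{tripledef} but for an arbitrary affine base $S$, carefully separating the deformation of the auxiliary rank-$r$ bundle $\mathcal{M}_S$ from the deformation of the pair $(\mathcal{F},\phi)$. First I would set up the problem: a lift $f':S'\to \mathfrak{H}^{(P_F,r,n)}_{s,\operatorname{FT}}(\tau')$ of $f$ is the same as a flat family $[\pi_X^*\mathcal{O}_X(-n)\otimes \pi_{S'}^*\mathcal{M}_{S'}\xrightarrow{\phi'}\mathcal{F}_{S'}]$ on $X\times S'$ restricting to $[\pi_X^*\mathcal{O}_X(-n)\otimes \pi_{S}^*\mathcal{M}_{S}\xrightarrow{\phi}\mathcal{F}]$, so I would write down the obvious analogue of Diagram~\eqref{defseq4}, with the top row replaced by the flat extension of $\pi_X^*\mathcal{O}_X(-n)\otimes \pi_{S}^*\mathcal{M}_S$ and the bottom row by the flat extension of $\mathcal{F}$.

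Next I would reduce to the highly-frozen case by fixing an extension of the bundle. Since $S$ is affine, the obstruction to lifting $\mathcal{M}_S$ to a rank-$r$ vector bundle $\mathcal{M}_{S'}$ on $S'$ lies in $H^2(S,\operatorname{End}(\mathcal{M}_S)\otimes\mathcal{I})=0$, and the set of isomorphism classes of such lifts is a torsor under $H^1(S,\operatorname{End}(\mathcal{M}_S)\otimes\mathcal{I})=0$, so an extension $\mathcal{M}_{S'}$ exists and is unique up to isomorphism. Having fixed such $\mathcal{M}_{S'}$, together with an isomorphism $\psi':\mathcal{E}_{S'}\xrightarrow{\cong}\pi_X^*\mathcal{O}_X(-n)\otimes \pi_{S'}^*\mathcal{M}_{S'}$ extending the trivialization of $\mathcal{E}$, the data is exactly that of a highly frozen triple deformation. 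By Proposition~\ref{tripledef2} (via Theorem~\ref{Illusie1}), the set of such deformations is a torsor under $\operatorname{Ext}^1(I^\bullet_S[1],\mathcal{F}\otimes\mathcal{I})\cong \operatorname{Hom}(I^\bullet_S,\mathcal{F})\otimes\mathcal{I}$.

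Finally I would collapse the $\operatorname{GL}_r$-action in the fibres of $\pi:\mathfrak{H}^{(P_F,r,n)}_{s,\operatorname{HFT}}(\tau')\to \mathfrak{H}^{(P_F,r,n)}_{s,\operatorname{FT}}(\tau')$ at the infinitesimal level. Two highly frozen lifts produce the same frozen lift iff they are related by an automorphism of $\mathcal{M}_{S'}$ reducing to the identity on $\mathcal{M}_S$; these automorphisms are exactly $\operatorname{id}+\mathcal{I}\cdot \operatorname{End}_{\mathcal{O}_{S'}}(\mathcal{M}_{S'})$, which is naturally identified with $\operatorname{End}_{\mathcal{O}_S}(\mathcal{M}_S)\otimes\mathcal{I}\cong \mathfrak{gl}_r(\mathcal{O}_S)\otimes\mathcal{I}$ after trivializing $\mathcal{M}_S$. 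The induced action on the torsor in the previous step is by precomposition with $\phi$, giving a $\mathcal{O}_S$-linear map $\mathfrak{gl}_r(\mathcal{O}_S)\otimes\mathcal{I}\to \operatorname{Hom}(I^\bullet_S,\mathcal{F})\otimes\mathcal{I}$. Quotienting by its image yields the asserted identification
\[
\mathcal{D}ef_S(S',\mathfrak{H}^{(P_F,r,n)}_{s,\operatorname{FT}}(\tau'))\cong \operatorname{Hom}(I^\bullet_S,\mathcal{F})\otimes\mathcal{I}\,\big/\,\operatorname{Im}\!\bigl(\mathfrak{gl}_r(\mathcal{O}_S)\otimes\mathcal{I}\to \operatorname{Hom}(I^\bullet_S,\mathcal{F})\otimes\mathcal{I}\bigr).
\]

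The main obstacle is Step 2, reducing to the highly-frozen deformation problem when $S$ is not a point: the algebra splitting $\mathcal{O}_{X\times S'}\cong \mathcal{O}_{X\times S}\oplus\mathcal{I}$ used in Proposition~\ref{tripledef} is no longer available, and the bundle $\mathcal{M}_S$ is non-trivial in general. This is handled by the vanishing of $H^1,H^2$ on the affine base $S$, which lets me pick an extension $\mathcal{M}_{S'}$ and absorb the resulting ambiguity into the $\mathfrak{gl}_r(\mathcal{O}_S)\otimes\mathcal{I}$ quotient. The rest is a direct transcription of Proposition~\ref{tripledef} with $\mathbb{C}$ replaced by $\mathcal{O}_S$.
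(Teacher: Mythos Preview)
Your proposal is correct and follows essentially the same route as the paper: use affineness of $S$ to extend the rank-$r$ bundle $\mathcal{M}_S$ uniquely (the paper proves this as an explicit lemma via lifting sections and Nakayama, you phrase it cohomologically via vanishing of $H^1,H^2$), reduce to the highly-frozen deformation problem governed by $\operatorname{Hom}(I^\bullet_S,\mathcal{F})\otimes\mathcal{I}$ via Proposition~\ref{tripledef2}, and then quotient by the infinitesimal $\operatorname{GL}_r$-automorphisms $\mathfrak{gl}_r(\mathcal{O}_S)\otimes\mathcal{I}$. The only cosmetic difference is that the paper organizes the last step through a comparison diagram with the exact sequence $\mathfrak{gl}_r(\mathcal{O}_S)\otimes\mathcal{I}\to\operatorname{Hom}(I^\bullet_S,\mathcal{F})\otimes\mathcal{I}\to\operatorname{Ext}^1(I^\bullet_S,I^\bullet_S)\otimes\mathcal{I}\to 0$ coming from Theorem~\ref{theorem10}, whereas you pass to the quotient directly.
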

\subsection*{Deformation of frozen triples as objects in the derived category}
\begin{lemma}\label{vanish-2}
Let $I^{\bullet}:=[\mathcal{O}_{X}(-n)^{\oplus r}\xrightarrow{\phi}F]$ correspond to a point of $\mathfrak{N}_{\text{HFT}}$ or $\mathfrak{N}_{\text{FT}}$. Then:
\begin{equation*}
\text{Ext}^{2}(F, \mathcal{O}_{X}(-n))\cong 0\cong \text{Ext}^{1}(F,\mathcal{O}_{X}(-n)).
\end{equation*} 
\end{lemma}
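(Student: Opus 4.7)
The plan is to reduce both Ext-groups to cohomology groups of $F(n)$ via Serre duality on the Calabi--Yau threefold $X$, and then invoke (i) Grothendieck vanishing on a sheaf with one-dimensional support, and (ii) the defining open condition of $\mathfrak{H}^{(P_{F},r,n)}_{s,\bullet}(\tau')$.

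First, since $X$ is a nonsingular Calabi--Yau 3-fold, $\omega_{X}\cong\mathcal{O}_{X}$. The sheaf $F$ is coherent with one-dimensional (hence proper, by the usual support assumption) support, and $\mathcal{O}_{X}(-n)$ is locally free, so Serre duality applies and yields
\begin{equation*}
\operatorname{Ext}^{i}(F,\mathcal{O}_{X}(-n))\;\cong\;\operatorname{Ext}^{3-i}(\mathcal{O}_{X}(-n),F\otimes\omega_{X})^{\vee}\;\cong\;H^{3-i}(X,F(n))^{\vee}.
\end{equation*}
Thus $\operatorname{Ext}^{1}(F,\mathcal{O}_{X}(-n))\cong H^{2}(X,F(n))^{\vee}$ and $\operatorname{Ext}^{2}(F,\mathcal{O}_{X}(-n))\cong H^{1}(X,F(n))^{\vee}$.

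Next I would dispatch the two groups separately. For $\operatorname{Ext}^{1}$, the sheaf $F(n)$ still has one-dimensional support, so Grothendieck's vanishing theorem (cohomological dimension bounded by the dimension of the support) gives $H^{i}(X,F(n))=0$ for all $i\geq 2$; in particular $H^{2}(X,F(n))=0$, whence $\operatorname{Ext}^{1}(F,\mathcal{O}_{X}(-n))=0$. For $\operatorname{Ext}^{2}$, the vanishing $H^{1}(X,F(n))=0$ is precisely the condition cutting out the open substack $\mathfrak{H}^{(P_{F},r,n)}_{s,\operatorname{HFT}}(\tau')\subset\mathfrak{M}^{(P_{F},r,n)}_{s,\operatorname{HFT}}(\tau')$ (and similarly for the frozen version) in Definition \ref{open-sub}; by hypothesis $[\mathcal{O}_{X}(-n)^{\oplus r}\to F]$ lies in this open substack, so $H^{1}(X,F(n))=0$ and therefore $\operatorname{Ext}^{2}(F,\mathcal{O}_{X}(-n))=0$.

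There is no genuine obstacle here: the argument is a one-line application of Serre duality combined with the twin inputs (a) dimension-of-support vanishing, and (b) the open condition built into the definition of $\mathfrak{H}$. The only mildly subtle point worth flagging in the write-up is that without restricting to the open substack $\mathfrak{H}$, the $\operatorname{Ext}^{2}$ statement would generally fail for insufficiently large $n$, which is precisely why Definition \ref{open-sub} was introduced.
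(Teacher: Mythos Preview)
Your proof is correct and follows essentially the same approach as the paper: apply Serre duality on the Calabi--Yau threefold to rewrite $\operatorname{Ext}^{i}(F,\mathcal{O}_{X}(-n))\cong H^{3-i}(F(n))^{\vee}$, then invoke the definition of $\mathfrak{H}$. You are slightly more explicit than the paper in separating the two vanishings---Grothendieck vanishing for $H^{2}(F(n))$ versus the open condition for $H^{1}(F(n))$---whereas the paper simply says ``the statement follows from the definitions,'' but the argument is the same.
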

\begin{proof} Use Serre duality and obtain:
\begin{align*}
&
\text{Ext}^{i}(F,\mathcal{O}_{X}(-n))\cong (\text{Ext}^{3-i}(\mathcal{O}_{X}(-n), F \otimes \omega_{X})^{\vee}
\cong \text{Ext}^{3-i}(\mathcal{O}_{X}(-n), F)^{\vee} \cong H^{3-i}(F(n))^{\vee}.
\end{align*}
The statement follows from the definitions of $\mathfrak{N}_{\text{HFT}}$ and $\mathfrak{N}_{\text{FT}}$.
\end{proof}
Let $I^{\bullet}\in \mathcal{D}^{b}(X)$ represent the complex $I^{\bullet}:=\mathcal{O}_{X}(-n)^{\oplus r}\xrightarrow{\phi}F$ with $\mathcal{O}_{X}(-n)^{\oplus r}$ in degree 0 and $F$ in degree 1.
Let $K:=\text{Ker}(\phi)$ and $Q:=\text{Coker}(\phi)$. There exist the following exact triangles in the derived category:
\begin{equation}\label{artan1}
F[-1]\rightarrow I^{\bullet} \rightarrow \mathcal{O}_{X}(-n)^{\oplus r}\rightarrow F \rightarrow \cdots
\end{equation}
\begin{equation}\label{artan2}
K\rightarrow I^{\bullet} \rightarrow Q[-1]\rightarrow K[1]\rightarrow \cdots
\end{equation}
\begin{lemma}
Suppose that a frozen triple $(E,F,\phi)$ of type $(r, P_F)$ is $\tau'$-stable. Then $\text{Ext}^{\leq -1}(I^{\bullet},I^{\bullet})=0$. 
\end{lemma}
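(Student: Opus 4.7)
The plan is to exploit the distinguished triangle \eqref{artan1}, namely $F[-1]\to I^{\bullet}\to E \to F$ with $E:=\mathcal{O}_{X}(-n)^{\oplus r}$, in order to reduce the computation of $\operatorname{Ext}^{\leq -1}(I^{\bullet},I^{\bullet})$ to Ext groups in which one slot is the coherent sheaf $E$ or $F$. First I would apply $\operatorname{RHom}(-,I^{\bullet})$ to this triangle, which produces the long exact sequence piece
\[
\operatorname{Ext}^{k}(E,I^{\bullet})\longrightarrow \operatorname{Ext}^{k}(I^{\bullet},I^{\bullet})\longrightarrow \operatorname{Ext}^{k+1}(F,I^{\bullet}),
\]
so it suffices to show that both flanks vanish for $k\leq -1$.

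To handle the left flank for $k\leq -1$, I would apply $\operatorname{RHom}(E,-)$ to the same triangle and extract
\[
\operatorname{Ext}^{k-1}(E,F)\longrightarrow \operatorname{Ext}^{k}(E,I^{\bullet}) \longrightarrow \operatorname{Ext}^{k}(E,E).
\]
For $k\leq -1$ both outer terms are negative Exts between coherent sheaves and hence vanish, forcing the middle to vanish. For the right flank, i.e.\ $\operatorname{Ext}^{j}(F,I^{\bullet})$ for $j\leq 0$, I would apply $\operatorname{RHom}(F,-)$ and examine
\[
\operatorname{Ext}^{j-1}(F,F)\longrightarrow \operatorname{Ext}^{j}(F,I^{\bullet}) \longrightarrow \operatorname{Ext}^{j}(F,E).
\]
For $j\leq -1$ both ends are again negative Exts between sheaves and vanish. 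The only case requiring separate attention is $j=0$: the left term $\operatorname{Ext}^{-1}(F,F)$ vanishes, and the right term $\operatorname{Hom}(F,E)$ vanishes because $F$, being pure of one-dimensional support on the three-fold $X$, is torsion, whereas $E=\mathcal{O}_{X}(-n)^{\oplus r}$ is locally free, hence torsion-free; so any homomorphism $F\to E$ must be zero.

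Splicing the three long exact sequences together yields $\operatorname{Ext}^{k}(I^{\bullet},I^{\bullet})=0$ for all $k\leq -1$, which is the claim. The argument is essentially index bookkeeping combined with the standard torsion/torsion-free vanishing, so no step presents a genuine obstacle. I would also remark that the full $\tau'$-stability hypothesis is not actually used in this proof: only the purity (and hence torsion nature) of $F$ enters, and that is already built into the definition of a frozen triple.
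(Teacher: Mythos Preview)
Your proof is correct. It differs from the paper's in a way worth noting. The paper applies $\operatorname{Hom}(I^{\bullet},-)$ to the triangle \eqref{artan1} (the covariant direction), which leads to having to compute $\operatorname{Ext}^{-1}(I^{\bullet},F)$ and $\operatorname{Ext}^{-2}(I^{\bullet},F)$; to handle those, the paper invokes the \emph{second} triangle \eqref{artan2} involving $K=\operatorname{Ker}(\phi)$ and $Q=\operatorname{Coker}(\phi)$, and here the $\tau'$-stability is genuinely used, via $\operatorname{Hom}(Q,F)=0$ because $Q$ is zero-dimensional and $F$ is pure. By contrast, you apply $\operatorname{RHom}(-,I^{\bullet})$ to \eqref{artan1} (the contravariant direction) and never touch \eqref{artan2} at all: the only non-formal vanishing you need is $\operatorname{Hom}(F,E)=0$, which follows from $F$ being torsion and $E$ locally free. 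Your observation that the stability hypothesis is unnecessary is therefore accurate for your route, though it is essential to the paper's route. Both arguments ultimately rest on the torsion/torsion-free vanishing $\operatorname{Hom}(F,\mathcal{O}_X(-n)^{\oplus r})=0$; yours simply reaches it more directly.
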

\begin{proof} Note that $\text{Ext}^{k}(I^{\bullet},I^{\bullet})=0$ for $k\leq -2$ for degree reasons. We now consider $k=-1$. Apply $\text{Hom}(I^{\bullet},\cdot)$ to \eqref{artan1} and obtain:
\begin{align}\label{I,E}
&
\text{Ext}^{-2}(I^{\bullet},F)\rightarrow \text{Ext}^{-1}(I^{\bullet},I^{\bullet})
\rightarrow \text{Ext}^{-1}(I^{\bullet},\mathcal{O}_{X}^{\oplus r}(-n))\rightarrow \text{Ext}^{-1}(I^{\bullet},F)
\end{align}
Now apply $\text{Hom}(\cdot,F)$ to \eqref{artan2} and obtain:
\begin{align}\label{Q,E}
&
\cdots\rightarrow \text{Ext}^{i}(Q[-1],F)\rightarrow \text{Ext}^{i}(I^{\bullet},F)\rightarrow 
\text{Ext}^{i}(K,F)\rightarrow \text{Ext}^{i+1}(Q[-1],F)\cdots
\end{align}
It is easy to see that $\text{Ext}^{-2}(Q[-1],F)\cong 0$, $\text{Ext}^{-2}(K,F)\cong 0$ and $\text{Ext}^{-1}(K,F)\cong 0$ for degree reasons. Moreover, $\text{Ext}^{-1}(Q[-1],F)= \text{Hom}(Q,F)\cong 0$ since $Q$ is zero dimensional (by $\tau'$-stability) and $F$ is of pure dimension one. Hence 
\begin{equation}\label{eq12}
\text{Ext}^{-2}(I^{\bullet},F)\cong 0  \;\; \text{and} \;\;  \text{Ext}^{-1}(I^{\bullet},F)\cong 0,
\end{equation}
and therefore $\text{Ext}^{-1}(I^{\bullet},I^{\bullet})\cong \text{Ext}^{-1}(I^{\bullet},\mathcal{O}^{\oplus r}_{X}(-n))$. Now apply $\text{Hom}(\cdot, \mathcal{O}^{\oplus r}_{X}(-n))$ to \eqref{artan1} and obtain:
\begin{align}
&
\text{Ext}^{-1}(F,\mathcal{O}^{\oplus r}_{X}(-n))\rightarrow \text{Ext}^{-1}(\mathcal{O}^{\oplus r}_{X}(-n),\mathcal{O}^{\oplus r}_{X}(-n))\rightarrow\text{Ext}^{-1}(I^{\bullet},\mathcal{O}^{\oplus r}_{X}(-n))\notag\\
&
\rightarrow \text{Hom}(F,\mathcal{O}^{\oplus r}_{X}(-n)).
\end{align} 
Now $\text{Ext}^{-1}(\mathcal{O}^{\oplus r}_{X}(-n),\mathcal{O}^{\oplus r}_{X}(-n))\cong 0$ by degree reasons and $\text{Hom}(F, \mathcal{O}^{\oplus r}_{X}(-n))\cong 0$ by purity of $\mathcal{O}^{\oplus r}_{X}(-n)$. Hence $\text{Ext}^{-1}(I^{\bullet},\mathcal{O}^{\oplus r}_{X}(-n))\cong 0$ and $\text{Ext}^{-1}(I^{\bullet},I^{\bullet})\cong \text{Ext}^{-1}(I^{\bullet},\mathcal{O}^{\oplus r}_{X}(-n))\cong 0.$
\end{proof} 
\begin{lemma}\label{lemma-higherank}
Let $I^{\bullet}\in \mathcal{D}^{b}(X)$ represent a $\tau'$-stable frozen triple $(E,F,\phi)$ of type $(r, P_F)$. Then there exists an injective map: $\text{Hom}(I^{\bullet},I^{\bullet})\hookrightarrow \text{End}(\mathcal{O}_{X}(-n)^{\oplus r})$.
\end{lemma}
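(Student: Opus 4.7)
The plan is to apply two Hom functors to the exact triangle \eqref{artan1} and combine the resulting long exact sequences with the vanishings already established in the preceding lemmas.

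First, I would apply $\operatorname{Hom}(I^{\bullet},-)$ to the distinguished triangle
$$F[-1]\to I^{\bullet}\to \mathcal{O}_X(-n)^{\oplus r}\to F$$
to obtain the long exact sequence
$$\operatorname{Ext}^{-1}(I^{\bullet},F)\to \operatorname{Hom}(I^{\bullet},I^{\bullet})\to \operatorname{Hom}(I^{\bullet},\mathcal{O}_X(-n)^{\oplus r})\to \operatorname{Hom}(I^{\bullet},F).$$
The leftmost term vanishes by equation \eqref{eq12} (which is precisely the consequence of $\tau'$-stability that was exploited in the preceding lemma), so one obtains an injection
$$\operatorname{Hom}(I^{\bullet},I^{\bullet})\hookrightarrow \operatorname{Hom}(I^{\bullet},\mathcal{O}_X(-n)^{\oplus r}).$$

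Next I would apply $\operatorname{Hom}(-,\mathcal{O}_X(-n)^{\oplus r})$ to the same triangle to get
$$\operatorname{Hom}(F,\mathcal{O}_X(-n)^{\oplus r})\to \operatorname{End}(\mathcal{O}_X(-n)^{\oplus r})\to \operatorname{Hom}(I^{\bullet},\mathcal{O}_X(-n)^{\oplus r})\to \operatorname{Ext}^{1}(F,\mathcal{O}_X(-n)^{\oplus r}).$$
The leftmost term is zero by the purity argument used repeatedly above: $F$ has one-dimensional support while $\mathcal{O}_X(-n)^{\oplus r}$ is torsion-free. The rightmost term vanishes by Lemma \ref{vanish-2}. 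Therefore
$$\operatorname{End}(\mathcal{O}_X(-n)^{\oplus r})\xrightarrow{\cong}\operatorname{Hom}(I^{\bullet},\mathcal{O}_X(-n)^{\oplus r}),$$
and composing with the injection from the previous paragraph yields the desired inclusion
$$\operatorname{Hom}(I^{\bullet},I^{\bullet})\hookrightarrow \operatorname{End}(\mathcal{O}_X(-n)^{\oplus r}).$$

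There is no real obstacle here once the two vanishings are in hand: the whole argument is a diagram-chase in the derived category, built entirely out of the triangle \eqref{artan1}. The only point worth double-checking is that the vanishings from Lemma \ref{vanish-2} (which are stated for $\mathcal{O}_X(-n)$) extend to $\mathcal{O}_X(-n)^{\oplus r}$, but this is immediate since $\operatorname{Ext}^{\bullet}(F,-)$ commutes with finite direct sums. It is also worth recording that, through the identification $\operatorname{End}(\mathcal{O}_X(-n)^{\oplus r})\cong \mathfrak{gl}_r(\mathbb{C})$ valid for large $n$, the map obtained above is exactly the one whose image appears as the kernel in the tangent-space computation of Proposition \ref{tripledef}.
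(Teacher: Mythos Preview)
Your proof is correct and follows essentially the same route as the paper: both apply $\operatorname{Hom}(I^{\bullet},-)$ and $\operatorname{Hom}(-,\mathcal{O}_X(-n)^{\oplus r})$ to the triangle \eqref{artan1}, use the vanishing of $\operatorname{Ext}^{-1}(I^{\bullet},F)$ for the injection and Lemma~\ref{vanish-2} (plus purity) for the isomorphism $\operatorname{End}(\mathcal{O}_X(-n)^{\oplus r})\cong\operatorname{Hom}(I^{\bullet},\mathcal{O}_X(-n)^{\oplus r})$, and then compose. The only cosmetic difference is that you invoke purity explicitly for $\operatorname{Hom}(F,\mathcal{O}_X(-n)^{\oplus r})=0$, whereas the paper folds this into its citation of Lemma~\ref{vanish-2}.
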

\begin{proof} Apply $\text{Hom}(I^{\bullet},\cdot)$ to \eqref{artan1} and obtain the following exact sequence:
\begin{align}\label{5-24}
&
\text{Ext}^{-1}(I^{\bullet},F)\rightarrow \text{Hom}(I^{\bullet},I^{\bullet}) \rightarrow \text{Hom}(I^{\bullet},\mathcal{O}_{X}(-n)^{\oplus r})\rightarrow \text{Hom}(I^{\bullet},F)\notag\\
&
\rightarrow \text{Ext}^{1}(I^{\bullet},I^{\bullet})\rightarrow \text{Ext}^{1}(I^{\bullet},\mathcal{O}_{X}(-n)^{\oplus r})\rightarrow \text{Ext}^{1}(I^{\bullet},F)\rightarrow \text{Ext}^{2}(I^{\bullet},I^{\bullet})\cdot
\end{align}
Observe that the leftmost term in \eqref{5-24} vanishes because of degree reasons:
\begin{align}\label{5-23}
&
0\rightarrow \text{Hom}(I^{\bullet},I^{\bullet}) \rightarrow \text{Hom}(I^{\bullet},\mathcal{O}_{X}(-n)^{\oplus r})\rightarrow \text{Hom}(I^{\bullet},F)\notag\\
&
\rightarrow \text{Ext}^{1}(I^{\bullet},I^{\bullet})\rightarrow \text{Ext}^{1}(I^{\bullet},\mathcal{O}_{X}(-n)^{\oplus r})\rightarrow \text{Ext}^{1}(I^{\bullet},F)\rightarrow \cdots
\end{align}

Now apply $\text{Hom}(\cdot,\mathcal{O}_{X}(-n)^{\oplus r})$ to \eqref{artan1} and obtain:
\begin{align}\label{corner}
&
\text{Hom}(F,\mathcal{O}_{X}(-n)^{\oplus r})\rightarrow \text{End}(\mathcal{O}_{X}(-n)^{\oplus r})
\rightarrow \text{Hom}(I^{\bullet},\mathcal{O}_{X}(-n)^{\oplus r}) \notag\\
&\rightarrow \text{Ext}^{1}(F,\mathcal{O}_{X}(-n)^{\oplus r})
\end{align}
Using Lemma \ref{vanish-2}, it is immediately seen that the leftmost and the rightmost terms in \eqref{corner} vanish. Hence $\text{End}(\mathcal{O}_{X}(-n)^{\oplus r})\cong \text{Hom}(I^{\bullet},\mathcal{O}_{X}(-n)^{\oplus r}).$ Hence it is seen from \eqref{5-23} that $\text{Hom}(I^{\bullet},I^{\bullet}) \rightarrow \text{End}(\mathcal{O}_{X}(-n)^{\oplus r})$ is injective.
\end{proof}
\begin{theorem}\label{theorem10}
Let $p \in \mathfrak{N}_{\text{FT}}$ be a point represented by the complex with fixed determinant $I^{\bullet}:=\mathcal{O}_{X}(-n)^{\oplus r}\xrightarrow{\phi} F$. The following is true: $$\text{T}_{p}\mathfrak{N}_{\text{FT}} \cong \text{Ext}^{1}(I^{\bullet},I^{\bullet})_{0}.$$
\end{theorem}
\begin{proof} Consider the exact sequence in \eqref{5-23}. Now use the result of Lemma \ref{lemma-higherank} to obtain the following exact sequence:
\begin{align}\label{5-234}
&
\text{Hom}(I^{\bullet},I^{\bullet}) \hookrightarrow \mathfrak{g}l_{r}(\mathbb{C})\rightarrow \text{Hom}(I^{\bullet},F)
\rightarrow \text{Ext}^{1}(I^{\bullet},I^{\bullet})\rightarrow \text{Ext}^{1}(I^{\bullet},\mathcal{O}_{X}(-n)^{\oplus r})\notag\\
&
\rightarrow \text{Ext}^{1}(I^{\bullet},F)\rightarrow \cdot\notag\\
\end{align}
where we have replaced $\text{End}(\mathcal{O}_{X}(-n)^{\oplus r})$ with $\mathfrak{g}l_{r}(\mathbb{C})$.
Now recall that $H^{1}(\mathcal{O}_{X})\cong 0$ by assumption. In that case, assuming that the complex $I^{\bullet}$ has fixed determinant, then we have $\text{Ext}^{i}(I^{\bullet},I^{\bullet})_{0}\cong \text{Ext}^{i}(I^{\bullet},I^{\bullet})$. Hence, the exact sequence in \eqref{5-234} is rewritten as:
\begin{align}\label{aslekar}
&
0\rightarrow \text{Hom}(I^{\bullet},I^{\bullet})\rightarrow \mathfrak{g}l_{r}(\mathbb{C})\rightarrow \text{Hom}(I^{\bullet},F)
\rightarrow \text{Ext}^{1}(I^{\bullet},I^{\bullet})_{0}\rightarrow 0\rightarrow \text{Ext}^{1}(I^{\bullet},F).
\end{align} 
Hence we obtain $\text{Hom}(I^{\bullet},F)/\text{Im}[\mathfrak{g}l_{r}(\mathbb{C})\rightarrow \text{Hom}(I^{\bullet},F)]\cong \text{Ext}^{1}(I^{\bullet},I^{\bullet})_{0}$. Now use Proposition \ref{tripledef} and obtain $\text{T}_{p}\mathfrak{N}_{\text{FT}}\cong \text{Ext}^{1}(I^{\bullet},I^{\bullet})_{0}.$
\end{proof}
\begin{cor}\label{theorem12}
Let $I^{\bullet}_{S}$ be defined as in Theorem \ref{theorem10}. The higher order deformation $I^{\bullet}_{S'}$ over $\acute{S}$ of $I^{\bullet}_{S}$ with trivial determinant is quasi-isomorphic to a complex: $$[\mathcal{O}_{X\times_{\mathbb{C}} \acute{S}}(-n)^{\oplus r}\xrightarrow{\phi'}\mathcal{\acute{F}}]$$
\end{cor}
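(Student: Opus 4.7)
The plan is to bootstrap from the identification of deformation spaces established in Theorem~\ref{Gamma1} and Theorem~\ref{theorem10}: every derived-category deformation class of $I^\bullet_S$ with trivial determinant is realized by a frozen-triple deformation, and the latter is by construction a two-term complex of the required shape. Thus the task reduces to (i) producing a frozen-triple lift $[\mathcal{O}_{X\times S'}(-n)^{\oplus r}\xrightarrow{\phi'}\mathcal{F}']$ whose derived-category deformation class coincides with that of $I^\bullet_{S'}$, and (ii) invoking the torsor structure of Theorem~\ref{Illusie1} to conclude that two deformations representing the same class are quasi-isomorphic in $\mathcal{D}^b(X\times S')$.

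First I would reduce to a square-zero extension $S\hookrightarrow S'$ with ideal $\mathcal{I}$; a general order-$k$ thickening is handled by filtering into a tower $S=S_0\hookrightarrow S_1\hookrightarrow\cdots\hookrightarrow S_k=S'$ and iterating, so it suffices to treat one square-zero step. By the Nakayama-style lemma appearing in the proof of Theorem~\ref{Gamma1}, the trivial bundle $\mathcal{O}_S^{\oplus r}$ lifts uniquely to $\mathcal{O}_{S'}^{\oplus r}$, and therefore $\mathcal{O}_{X\times S}(-n)^{\oplus r}$ admits the canonical $S'$-flat lift $\mathcal{O}_{X\times S'}(-n)^{\oplus r}$. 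Next, by the surjectivity part of the identification in Theorem~\ref{Gamma1}, the class $\xi\in\operatorname{Ext}^1(I^\bullet_S,I^\bullet_S)_0\otimes\mathcal{I}$ represented by $I^\bullet_{S'}$ is realized by some $S'$-flat deformation $\mathcal{F}'$ of $\mathcal{F}$ together with a morphism $\phi':\mathcal{O}_{X\times S'}(-n)^{\oplus r}\to\mathcal{F}'$ whose restriction to $X\times S$ is $\phi$. Viewing this frozen-triple lift as a two-term complex $J^\bullet_{S'}$ in $\mathcal{D}^b(X\times S')$, both $I^\bullet_{S'}$ and $J^\bullet_{S'}$ are deformations of $I^\bullet_S$ representing the same element of the Illusie torsor $\operatorname{Ext}^1(I^\bullet_S,I^\bullet_S)_0\otimes\mathcal{I}$, and hence are related by a quasi-isomorphism extending the identity on $I^\bullet_S$.

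The hard part will be to verify that the derived-category deformation class of $I^\bullet_{S'}$ really is hit by a frozen-triple deformation; equivalently, that the cohomology sheaves $\mathcal{H}^0(I^\bullet_{S'})$ and $\mathcal{H}^1(I^\bullet_{S'})$ of the derived lift deform compatibly with flatness over $S'$. This is where $\tau'$-stability does the work: by Lemma~\ref{lemma2} the cokernel $Q$ of $\phi$ is zero-dimensional, so the vanishing $\operatorname{Hom}(Q,F)=0$ used in the proofs of Theorem~\ref{theorem10} and Lemma~\ref{pf-auto-1} propagates to the $S'$-level, and together with the vanishings of Lemma~\ref{vanish-2} it forces the lift $\mathcal{F}'$ to inherit $S'$-flatness and the lift $\phi'$ to have the expected shape, rather than producing an exotic object of $\mathcal{D}^b(X\times S')$ with cohomology in unexpected degrees. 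Once these flatness and purity checks are carried out, the quasi-isomorphism $I^\bullet_{S'}\simeq[\mathcal{O}_{X\times S'}(-n)^{\oplus r}\xrightarrow{\phi'}\mathcal{F}']$ follows from the torsor argument sketched above, and the higher-order assertion follows by induction along the tower of square-zero extensions.
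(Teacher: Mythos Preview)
Your proposal is correct and rests on the same idea the paper invokes: the identification of the frozen-triple deformation space with $\operatorname{Ext}^1(I^\bullet_S,I^\bullet_S)_0\otimes\mathcal{I}$ forces every fixed-determinant derived deformation to come from a frozen-triple lift, which is by construction a two-term complex of the stated shape. The paper's own proof is a single sentence (``This is a direct consequence of Theorem~\ref{theorem10}''), so your write-up is an unpacking of what the paper regards as immediate.

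One comment on emphasis: the ``hard part'' you flag---checking that $\mathcal{F}'$ is $S'$-flat and that $I^\bullet_{S'}$ has cohomology only in degrees $0,1$---is not an additional verification you need to carry out. The isomorphism $\mathcal{D}ef_S(S',\mathfrak{H}^{(P_F,r,n)}_{s,\operatorname{FT}}(\tau'))\cong\operatorname{Ext}^1(I^\bullet_S,I^\bullet_S)_0\otimes\mathcal{I}$ of Theorem~\ref{Gamma1} already packages those checks: the left-hand side parametrizes $S'$-flat frozen triples by definition, and the isomorphism (not merely a map) says this exhausts the derived deformation classes. So once you cite Theorem~\ref{Gamma1}, the torsor argument and the induction along the tower of square-zero extensions are all that remain, and your proposal can be shortened accordingly.
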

\begin{proof} 
This is a direct consequence of Theorem \ref{theorem10}.
\end{proof}
\section*{Deformation-obstruction theories and virtual fundamental class}\label{sec7}
By Theorem \ref{theorem8'} the moduli stack of stable frozen triples $ \mathfrak{N}_{\text{FT}}$ is an Artin stack. The definition of a perfect deformation-obstruction theory for $ \mathfrak{N}_{\text{FT}}$ is as follows:
\begin{defn}\label{perfect-amp}
Following \cite{a61} and \cite{a66}, a perfect deformation-obstruction theory for $\mathfrak{N}_{\text{FT}}$ is given by a perfect  3-term complex $\mathbb{E}^{\bullet\vee}$ of strongly perfect amplitude $[-1,1]$ and a map in the derived category $\mathbb{E}^{\bullet\vee}\xrightarrow{\phi} \mathbb{L}^{\bullet}_{\mathfrak{N}_{\text{FT}}}$ such that $h^{1}(\phi)$ and $h^{0}(\phi)$ are isomorphisms and $h^{-1}(\phi)$ is an epimorphism. Here $ \mathbb{L}^{\bullet}_{\mathfrak{N}_{\text{FT}}}$ is the truncated cotangent complex of the Artin moduli stack of $\tau'$-stable frozen triples concentrated in degrees $-1$, 0 and 1 whose pullback via the projection map $\pi:\mathfrak{N}_{\text{HFT}}\rightarrow \mathfrak{N}_{\text{FT}}$ has the form:$$\pi^{*}\mathbb{L}^{\bullet}_{\mathfrak{N}_{\text{FT}}}:= \mathcal{I}/\mathcal{I}^{2}\rightarrow \Omega_{\mathfrak{A}}\mid_{ \mathfrak{N}_{\text{HFT}}}\rightarrow (\mathfrak{g}l_{r}(\mathbb{C}))^{\vee}\otimes \mathcal{O}_{ \mathfrak{N}_{\text{HFT}}}.$$Note that $(\mathfrak{g}l_{r}(\mathbb{C}))^{\vee}\otimes \mathcal{O}_{ \mathfrak{N}_{\text{HFT}}}\cong \Omega_{\pi}$, $\mathfrak{A}$ denotes an ambient smooth Artin stack and $\mathcal{I}$ is the ideal corresponding to the embedding $\mathfrak{N}_{\text{HFT}}\hookrightarrow \mathfrak{A}$. 
\end{defn}

\subsection*{Deformation-obstruction theory of amplitude $[-2,1]$ over $\mathfrak{N}_{\text{FT}}$}\label{sec8}
In what follows, we use the following notation:$$\pi_{\mathfrak{N}}:X\times \mathfrak{N}_{\text{FT}}\rightarrow \mathfrak{N}_{\text{FT}}\,\,\, \text{and}\,\,\, \pi_{X}:X\times\mathfrak{N}_{\text{FT}}\rightarrow X.$$ 
\begin{theorem}\label{reldef-f}
(a). There exists a map in the derived category given by:$$R\pi_{\mathfrak{N}}\left(R\mathscr{H}om(\mathbb{I}^{\bullet},\mathbb{I}^{\bullet})_{0}\otimes \pi_{X}^{*}\omega_{X}\right)[2]\xrightarrow{ob} \mathbb{L}^{\bullet}_{\mathfrak{N}_{\text{FT}}}.$$ 
(b). After suitable truncations, there exists a 4 term complex $\mathbb{E}^{\bullet}$ of locally free sheaves , such that $\mathbb{E}^{\bullet\vee}$ is self-symmetric of amplitude $[-2,1]$ and there exists a map in the derived category:
\begin{equation}\label{defobs-froz}
\mathbb{E}^{\bullet\vee}\xrightarrow{ob^{t}} \mathbb{L}^{\bullet}_{\mathfrak{N}_{\text{FT}}},
\end{equation}
such that $h^{-1}(ob^{t})$ is surjective, and $h^{0}(ob^{t})$ and $h^{1}(ob^{t})$ are isomorphisms. 
\end{theorem}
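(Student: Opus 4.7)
For part (a), the map $ob$ is essentially assembled from the material preceding the statement: from the universal triangle $\mathbb{F}[-1]\to \mathbb{I}^{\bullet}\to M\otimes \mathcal{O}_{X\times\mathfrak{H}}(-n)$, applying $R\mathscr{H}om(-,\mathbb{I}^{\bullet})_{0}\otimes\pi_{X}^{*}\omega_{X}[2]$ and projecting onto the trace-free summand yields a morphism in $\mathcal{D}^{b}(X\times\mathfrak{H})$ with target $R\mathscr{H}om(\mathbb{F},\mathbb{I}^{\bullet})\otimes\omega_{\pi_{\mathfrak{H}}}[3]$. Pushing this morphism forward by $R\pi_{\mathfrak{H}\ast}$ and composing with the map $\psi$ from Theorem \ref{def-obs1} produces $ob$.

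For part (b) the plan has three steps. First, I would establish that the source of $ob$ is already perfect of amplitude $[-2,1]$. Perfectness follows by replacing $\mathbb{I}^{\bullet}$ with a bounded locally free resolution on $X\times\mathfrak{H}$, available by the quasi-projectivity of the parameter scheme from Remark \ref{wandel-scheme}, together with the standard fact that $R\pi_{\mathfrak{H}\ast}$ preserves perfectness along the smooth projective morphism $\pi_{\mathfrak{H}}$. Amplitude is a fiberwise calculation: the $i$-th cohomology is $\operatorname{Ext}^{i+2}(I^{\bullet},I^{\bullet})_{0}$, which vanishes for $i<-2$ by the vanishing $\operatorname{Ext}^{\leq -1}(I^{\bullet},I^{\bullet})=0$ proved earlier and for $i>1$ because $\dim X=3$. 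Then I would represent this perfect complex globally by a 4-term complex $\mathbb{E}^{\bullet}$ of locally free sheaves in degrees $-1,0,1,2$, using standard resolution techniques on the quasi-projective base. Self-symmetry of $\mathbb{E}^{\bullet\vee}$ is forced by Grothendieck--Verdier duality for $\pi_{\mathfrak{H}}$: the relative dualizing complex on the Calabi--Yau 3-fold is $\pi_{X}^{*}\omega_{X}[3]\cong \mathcal{O}_{X\times\mathfrak{H}}[3]$ and the trace-free $R\mathscr{H}om$ is self-dual, so
\begin{equation*}
(\mathbb{E}^{\bullet\vee})^{\vee}\;\cong\; R\pi_{\mathfrak{H}\ast}R\mathscr{H}om(\mathbb{I}^{\bullet},\mathbb{I}^{\bullet})_{0}[1]\;\cong\; \mathbb{E}^{\bullet\vee}[-1],
\end{equation*}
the shift appropriate to a symmetric obstruction theory of amplitude $[-2,1]$.

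Finally, I would verify the cohomological conditions on $ob^{t}$ fiberwise, using the presentation of $\mathbb{L}^{\bullet}_{\mathfrak{H}}$ with $h^{-1}=\mathcal{I}/\mathcal{I}^{2}$, $h^{0}=\Omega_{\mathfrak{A}}|_{\mathfrak{H}}$ and $h^{1}\cong \Omega_{\pi}\cong(\mathfrak{g}l_{r}(\mathbb{C}))^{\vee}\otimes\mathcal{O}_{\mathfrak{H}}$. The iso on $h^{0}(ob^{t})$ dualizes to the identification $T_{p}\mathfrak{H}\cong \operatorname{Ext}^{1}(I^{\bullet},I^{\bullet})_{0}$ of Theorem \ref{theorem10}; the iso on $h^{1}(ob^{t})$ matches the stabilizer algebra with $\operatorname{Ext}^{0}(I^{\bullet},I^{\bullet})_{0}$ via Lemma \ref{lemma-higherank}; and surjectivity of $h^{-1}(ob^{t})$ follows from Illusie's primary obstruction (Theorem \ref{Illusie1}) landing in $\operatorname{Ext}^{2}(I^{\bullet},I^{\bullet})_{0}$, together with the deformation-theoretic sequence \eqref{aslekar} and the relative statement of Theorem \ref{Gamma1}.

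The main obstacle I anticipate is the combination of global truncation with strict self-symmetry: producing a single 4-term locally free $\mathbb{E}^{\bullet}$ on which the Grothendieck--Verdier identification holds at the level of complexes (not merely in the derived category), while being $\operatorname{GL}_{r}$-equivariant enough to descend from $\mathfrak{S}_{s}^{(P_F,r,n)}(\tau')$ to the Artin quotient stack, is the delicate technical point. A secondary subtlety is that $h^{-1}(ob^{t})$ is merely surjective; its kernel records the extra infinitesimal directions of the Artin structure coming from trivializations of $E$, which are not visible to the derived-object deformation theory alone.
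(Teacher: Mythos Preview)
Your proposal is correct in outline and, for part~(b), essentially coincides with the paper's argument: perfectness and amplitude via fiberwise $\operatorname{Ext}$ vanishing, the global 4-term resolution via the standard trick of choosing sufficiently negative locally free resolutions (the paper cites \cite{a9} Proposition~2.1.10 and \cite{a17} Lemma~2.10), self-symmetry via Grothendieck--Verdier duality, and the cohomological conditions on $ob^{t}$ via Theorem~\ref{theorem10} and the sequence~\eqref{aslekar}. Your anticipated obstacle about strict self-symmetry versus derived self-symmetry is exactly the issue, and the paper treats it only at the level of the derived category (Lemma~\ref{self-dual}), which is all that is needed downstream.

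The genuine difference is in part~(a). You build $ob$ as the composite through $\psi$ of Theorem~\ref{def-obs1}, which is the route the paper sketches in the remark containing~\eqref{second-obs}. The paper's actual proof, however, constructs $ob$ directly from the truncated Atiyah class of $\mathbb{I}^{\bullet}$ on $X\times\mathfrak{H}$, and the bulk of the argument is a descent step: the Atiyah class is first written on the atlas $X\times\mathfrak{S}$, and one must show that its composite with the projection $\Omega^{1}_{X\times\mathfrak{S}}\to\mathfrak{g}^{\vee}\otimes\mathcal{O}$ is $G$-equivariantly split (via the action Lie algebroid connection coming from the $G$-equivariant structure on $\mathbb{I}^{\bullet}$), so that the Atiyah class factors through $q_{\mathfrak{H}}^{*}L^{\bullet}_{X\times\mathfrak{H}}$. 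Only then do the Serre/adjunction manipulations in~\eqref{isom-series2}--\eqref{obs} produce the map. Your shortcut is legitimate because Theorem~\ref{def-obs1} already lives on the stack, but it hides precisely the descent issue you flag at the end of your proposal; the paper's Atiyah-class route makes that descent explicit and is more intrinsic (it is the standard Huybrechts--Thomas construction), at the cost of being longer.

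One small imprecision: your justification for $h^{1}(ob^{t})$ being an isomorphism invokes Lemma~\ref{lemma-higherank}, but that lemma only gives an injection $\operatorname{Hom}(I^{\bullet},I^{\bullet})\hookrightarrow\mathfrak{gl}_{r}(\mathbb{C})$. What is actually needed is that this injection identifies $\operatorname{Hom}(I^{\bullet},I^{\bullet})$ with the infinitesimal automorphisms of the frozen triple, i.e.\ with the Lie algebra of the stabilizer in $\operatorname{GL}_{r}\times\operatorname{GL}(V_{F})$; the paper asserts this in the remark following Proposition~\ref{reldefhigh} without further detail, so your level of justification matches the paper's.
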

\begin{proof} 
Here we prove (a). Consider the universal complex: 
$$\mathbb{I}^{\bullet}=[M\otimes \mathcal{O}_{X\times \mathfrak{N}_{\text{FT}}}(-n) \rightarrow \mathbb{F}]\in \mathcal{D}^{b}(X\times \mathfrak{N}).$$ 
Since the composition of the maps $id:\mathcal{O}_{X\times \mathfrak{N}_{\text{FT}}}\rightarrow R\mathscr{H}om(\mathbb{I}^{\bullet},\mathbb{I}^{\bullet})$ and $tr:R\mathscr{H}om(\mathbb{I}^{\bullet},\mathbb{I}^{\bullet})\rightarrow \mathcal{O}_{X\times \mathfrak{N}_{\text{FT}}}$ is multiplication by $rk(\mathbb{I}^{\bullet})$, one obtains a splitting
$$R\mathscr{H}om(\mathbb{I}^{\bullet},\mathbb{I}^{\bullet})\cong R\mathscr{H}om(\mathbb{I}^{\bullet},\mathbb{I}^{\bullet})_{0}\oplus \mathcal{O}_{X\times \mathfrak{N}_{\text{FT}}}$$
Recall that by part (2) of Theorem \ref{theorem8'}, $\mathfrak{N}_{\text{FT}}=[\frac{\mathfrak{S}(\tau')}{G}]$ where $G=\text{GL}_{r}(\mathbb{C})\times \text{GL}(V_{F})$. For simplicity denote $\mathfrak{S}:=\mathfrak{S}(\tau')$. Let $\mathbb{I}^\bullet_{\mathfrak{S}}$ denote the pullback of $\mathbb{I}^\bullet$ to $X\times \mathfrak{S}$.  We write 
$L^\bullet$ to mean the full, untruncated cotangent complex, and write $\mathbb{L}^\bullet = \tau^{\geq -1}L^\bullet$ for the truncated cotangent complex.  
Consider the {\em Atiyah class} $\mathbb{I}_{\mathfrak{S}}^\bullet\rightarrow L_{X\times \mathfrak{S}}^\bullet \otimes \mathbb{I}_{\mathfrak{S}}^\bullet[1]$ defined by
Illusie \cite[Section IV2.3.6]{a29}.  The Atiyah class can be identified with a class in 
$\text{Ext}^1(\mathbb{I}_{\mathfrak{S}}, L_{X\times \mathfrak{S}}^\bullet \otimes \mathbb{I}_{\mathfrak{S}}^\bullet)$.  
  The composite 
$$\mathbb{I}_{\mathfrak{S}}^\bullet\rightarrow L_{X\times \mathfrak{S}}^\bullet \otimes \mathbb{I}_{\mathfrak{S}}^\bullet[1]
\rightarrow \tau^{\geq -1}L^\bullet_{X\times \mathfrak{S}}\otimes \mathbb{I}_{\mathfrak{S}}^\bullet[1]
= \mathbb{L}^\bullet_{X\times \mathfrak{S}}\otimes \mathbb{I}^\bullet_{\mathfrak{S}}[1]
$$ is the truncated Atiyah class of 
\cite[Section 2.2]{a10}. By \cite[Proposition 2.1.10]{a9}  the complex $\mathbb{I}^\bullet$ is perfect.  It then follows from \cite[Corollaire IV.2.3.7.4]{a29} that the composite $\mathbb{I}_{\mathfrak{S}}^\bullet\rightarrow \mathbb{L}^\bullet_{X\times \mathfrak{S}}\otimes \mathbb{I}^\bullet_{\mathfrak{S}}[1] \rightarrow \Omega^1_{X\times \mathfrak{S}}\otimes \mathbb{I}_{\mathfrak{S}}^\bullet[1]$, when identified with a 1-extension, agrees with the canonical 1-extension
\begin{equation}\label{PP}
0\rightarrow \Omega^1_{X\times\mathfrak{S}}\otimes \mathbb{I}_{\mathfrak{S}}^\bullet \rightarrow \mathcal{P}^1_{X\times \mathfrak{S}}\otimes\mathbb{I}^\bullet_{\mathfrak{S}} \rightarrow \mathbb{I}^\bullet_{\mathfrak{S}} \rightarrow 0,
\end{equation}
defined by tensoring with the first-order principal parts
$\mathcal{P}^1_{X\times \mathfrak{S}}$. We want to show that the Atiyah class descends to $X\times \mathfrak{N}_{\text{FT}} = X\times[\frac{\mathfrak{S}}{G}]$ where $G=\text{GL}_{r}(\mathbb{C})\times\text{GL}(V_{F})$.  

More precisely, this means the following;  Let
$q_{\mathfrak{N}}: X\times \mathfrak{S}\rightarrow X\times \mathfrak{N}_{\text{FT}}$ denote the projection.   Then we want a morphism
$$\mathbb{I}^\bullet\rightarrow L_{X\times\mathfrak{N}_{\text{FT}}}^\bullet \otimes \mathbb{I}^\bullet[1]$$ on $\mathfrak{N}_{\text{FT}}$, such that the natural composite 
$$q_{\mathfrak{N}}^*\mathbb{I}^\bullet\rightarrow q_{\mathfrak{N}}^*L_{X\times\mathfrak{N}_{\text{FT}}}^\bullet \otimes q^*_{\mathfrak{N}}\mathbb{I}^\bullet[1]\rightarrow L_{X\times \mathfrak{S}}^\bullet\otimes \mathbb{I}_{\mathfrak{S}}^\bullet[1]$$ agrees with the Atiyah class of Illusie.  
The complex $\mathbb{I}^\bullet_{\mathfrak{S}}$ is $G$-equivariant by construction (it comes via pullback from $X\times\mathfrak{N}_{\text{FT}}$), and the construction of the Atiyah class shows that it too is naturally $G$-equivariant.  The pulled back cotangent complex 
 $q_{\mathfrak{N}}^*L_{X\times\mathfrak{N}_{\text{FT}}}^\bullet$ has the following description;
 
   There is a natural composite map
 $L_{X\times\mathfrak{S}}^\bullet \rightarrow \Omega^1_{X\times\mathfrak{S}}\rightarrow \mathfrak{g}^{\vee}\otimes\mathcal{O}_{X\times\mathfrak{S}}$, where
 the second map is dual to the infinitesimal $\mathfrak{g}$-action (and $\mathfrak{g} = \text{Lie}(G)$).  Then
 $q_{\mathfrak{N}}^*L_{X\times\mathfrak{N}_{\text{FT}}}^\bullet \simeq \text{Cone}[L_{X\times\mathfrak{S}}^\bullet\rightarrow \mathfrak{g}^{\vee}\otimes\mathcal{O}_{X\times\mathfrak{S}}][-1]$.  Thus, to prove that the Atiyah class descends to $X\times\mathfrak{N}_{\text{FT}}$ in the sense explained above, it suffices to show that the composite
 $$\mathbb{I}_{\mathfrak{S}}^\bullet\rightarrow \mathbb{L}^\bullet_{X\times\mathfrak{S}}\otimes \mathbb{I}^\bullet_{\mathfrak{S}}[1] \rightarrow \Omega^1_{X\times\mathfrak{S}}\otimes \mathbb{I}_{X\times\mathfrak{S}}^\bullet[1]\rightarrow \mathfrak{g}^\vee\otimes \mathbb{I}_{\mathfrak{S}}^\bullet[1]$$ represents an equivariantly split extension.  By the above discussion, this extension is obtained by pushing out the principal parts extension \eqref{PP} along the natural map 
 $\Omega^1_{X\times\mathfrak{S}}\otimes \mathbb{I}_{\mathfrak{S}}^\bullet \rightarrow \mathfrak{g}^{\vee}\otimes \mathbb{I}_{\mathfrak{S}}^\bullet$.  Just as a splitting of the principal parts extension corresponds to a choice of connection, however, a splitting of its pushout
 corresponds to a choice of an $L$-connection \cite[Section 4]{a68} where $L = \mathfrak{g}\otimes\mathcal{O}_{X\times\mathfrak{S}}$ is the action Lie algebroid associated to the
 infinitesimal $G$-action.  Since $\mathbb{I}^\bullet$ is $G$-equivariant, it comes equipped with a $\mathfrak{g}\otimes\mathcal{O}_{X\times\mathfrak{S}}$-connection, hence a $G$-equivariant splitting of the required $1$-extension.  It follows that the Atiyah class descends
 to $X\times\mathfrak{N}_{\text{FT}}$. 
 
 We now have the truncated Atiyah class of the universal complex, given by a class in
\begin{align}
&\text{Ext}^{1}_{X\times \mathfrak{N}_{\text{FT}}}(\mathbb{I}^{\bullet},\mathbb{I}^{\bullet} \otimes \mathbb{L}^{\bullet}_{X\times \mathfrak{N}_{\text{FT}}})\cong \text{Ext}^{1}_{X\times\mathfrak{N}_{\text{FT}}}(R\mathscr{H}om(\mathbb{I}^{\bullet},\mathbb{I}^{\bullet}),\mathbb{L}^{\bullet}_{X\times\mathfrak{N}_{\text{FT}}})\notag \\
&
\cong \text{Ext}^{1}_{X\times\mathfrak{N}_{\text{FT}}}(R\mathscr{H}om(\mathbb{I}^{\bullet},\mathbb{I}^{\bullet})_{0}\oplus \mathcal{O}_{X\times \mathfrak{N}_{\text{FT}}},\mathbb{L}^{\bullet}_{X\times\mathfrak{N}_{\text{FT}}}),
\end{align}
where $\mathbb{L}^{\bullet}_{X\times \mathfrak{N}_{\text{FT}}}$ denotes the truncated cotangent complex of $X\times \mathfrak{N}_{\text{FT}}$. Note that over $X\times \mathfrak{N}_{\text{FT}}$, $\mathbb{L}^{\bullet}_{X\times \mathfrak{N}_{\text{FT}}}=\pi_{X}^{*}\mathbb{L}^{\bullet}_{X}\oplus \pi_{\mathfrak{N}}^{*}\mathbb{L}^{\bullet}_{\mathfrak{N}_{\text{FT}}}$ and so  one obtains the following map between the $\text{Ext}$ groups:
\begin{align}
&
\text{Ext}^{1}_{X\times\mathfrak{N}_{\text{FT}}}(R\mathscr{H}om(\mathbb{I}^{\bullet},\mathbb{I}^{\bullet})_{0}\oplus \mathcal{O}_{X\times \mathfrak{N}_{\text{FT}}},\mathbb{L}^{\bullet}_{X\times\mathfrak{N}_{\text{FT}}})\rightarrow \text{Ext}^{1}_{X\times\mathfrak{N}_{\text{FT}}}(R\mathscr{H}om(\mathbb{I}^{\bullet},\mathbb{I}^{\bullet})_{0},\pi_{\mathfrak{N}}^{*}\mathbb{L}^{\bullet}_{\mathfrak{N}_{\text{FT}}}).
\end{align}
On the other hand:
\begin{align}\label{isom-series2}
&
\text{Ext}^{1}_{X\times\mathfrak{N}_{\text{FT}}}\left(R\mathscr{H}om(\mathbb{I}^{\bullet},\mathbb{I}^{\bullet})_{0},\pi_{\mathfrak{N}}^{*}\mathbb{L}^{\bullet}_{\mathfrak{N}_{\text{FT}}}\right)\notag\\
&
\cong \text{Ext}^{\text{\text{dim}}(X)+\text{\text{dim}}(\mathfrak{N}_{\text{FT}})-1}_{X\times \mathfrak{N}_{\text{FT}}}\left(\pi_{\mathfrak{N}}^{*}\mathbb{L}^{\bullet}_{\mathfrak{N}_{\text{FT}}},R\mathscr{H}om(\mathbb{I}^{\bullet},\mathbb{I}^{\bullet})_{0}\otimes \omega_{X\times \mathfrak{N}_{\text{FT}}}\right)^{\vee}\notag\\
&
\cong\text{Ext}^{\text{\text{dim}}(X)+\text{\text{dim}}(\mathfrak{N}_{\text{FT}})-1}_{\mathfrak{N}_{\text{FT}}}\left(\mathbb{L}^{\bullet}_{\mathfrak{N}_{\text{FT}}},R\pi_{\mathfrak{N}\,\ast}\left(R\mathscr{H}om(\mathbb{I}^{\bullet},\mathbb{I}^{\bullet})_{0}\otimes \omega_{X\times \mathfrak{N}_{\text{FT}}}\right)\right)^{\vee}\notag \\
&
\cong\text{Ext}^{\text{\text{dim}}(\mathfrak{N}_{\text{FT}})-[\text{\text{dim}}(X)+\text{\text{dim}}(\mathfrak{N}_{\text{FT}})-1]}_{\mathcal{M}^{P}_{\text{FT}}}\bigg(R\pi_{\mathfrak{N}\,\ast}\left(R\mathscr{H}om(\mathbb{I}^{\bullet},\mathbb{I}^{\bullet})_{0}\otimes \omega_{X\times \mathfrak{N}_{\text{FT}}}\right),\mathbb{L}^{\bullet}_{\mathfrak{N}_{\text{FT}}}\otimes \omega_{\mathfrak{N}_{\text{FT}}}\bigg)\notag\\
& 
\cong\text{Ext}^{-\text{\text{dim}}(X)+1}_{\mathfrak{N}}\bigg(R\pi_{\mathfrak{N}\,\ast}\left(R\mathscr{H}om(\mathbb{I}^{\bullet},\mathbb{I}^{\bullet})_{0}\otimes \omega_{X\times \mathfrak{N}}\right)\otimes R\pi_{\mathfrak{N}\,\ast}\pi_{\mathfrak{N}}^{*}\omega_{\mathfrak{N}_{\text{FT}}}^{-1}, \mathbb{L}^{\bullet}_{\mathfrak{N}_{\text{FT}}}\bigg),\notag\\
\end{align}
where the first isomorphism is obtained by Serre duality, the second isomorphism is induced by the adjointness property of  the left derived pullback and the right derived pushforward and the third isomorphism is obtained by Serre duality. By projection formula and the definition of the relative dualizing sheaf $\omega_{\pi_{\mathfrak{N}}}=\omega_{X\times\mathfrak{N}}\otimes \omega_{\mathfrak{N}}^{-1}=\pi_{X}^{*}\omega_{X}$ and since $X$ is a threefold, the last term in \eqref{isom-series2} is rewritten as:
\begin{equation}\label{obs}
R\pi_{\mathfrak{N}\,\ast}\left(R\mathscr{H}om(\mathbb{I}^{\bullet},\mathbb{I}^{\bullet})_{0}\otimes \pi_{X}^{*}\omega_{X}\right)[2]\rightarrow \mathbb{L}^{\bullet}_{\mathfrak{N}_{\text{FT}}}.
\end{equation}
Now for part (b) use Theorem \ref{theorem10} as well as \cite{a10,a17} and see that \eqref{obs} induces isomorphisms in $\text{h}^{0}$ and $\text{h}^{1}$ levels and a surjection at $\text{h}^{-1}$ level. Finally, we apply \cite[Lemma 2.10]{a17} to \eqref{obs} and see that the complex on the left hand side of \eqref{obs} is quasi-isomorphic to a 4 term complex of vector bundles. We briefly review their strategy here; Start from $A^{\bullet}$, a finite locally free resolution of $\mathbb{I}^{\bullet}$. Then$A^{\bullet\vee}\otimes A^{\bullet}$ is a finite locally free resolution of $R\mathscr{H}om(\mathbb{I}^{\bullet},\mathbb{I}^{\bullet})$ and moreover,$$A^{\bullet\vee}\otimes A^{\bullet}\cong \mathcal{O}_{X\times \mathfrak{N}_{\text{FT}}}\oplus (A^{\bullet\vee}\otimes A^{\bullet})_{0}$$where the complex $(A^{\bullet\vee}\otimes A^{\bullet})_{0}$ is the locally free resolution of $R\mathscr{H}om(\mathbb{I}^{\bullet},\mathbb{I}^{\bullet})_{0}$. Now let  $B^{\bullet}$ be a finite locally free resolution of $(A^{\bullet\vee}\otimes A^{\bullet})_{0}$, trimmed to start at least 4 places earlier than $(A^{\bullet\vee}\otimes A^{\bullet})_{0}$, such that $B^{\bullet}$ satisfies the property that $R^{\leq2}\pi_{*}B^{j}\cong 0$ and $R^{3}\pi_{*}B^{j}$ is locally free. Then following the same strategy as in \cite[Lemma 2.10]{a17}, we can define a complex $\mathbb{E}^{\bullet}$, whose terms are defined by $$E^{k}:=R^{3}\pi_{*}B^{k+3}$$ which is a finite complex of locally free sheaves, quasi-isomorphic to $R\mathscr{H}om(\mathbb{I}^{\bullet},\mathbb{I}^{\bullet})_{0}$ and by base change, the restriction of $\mathbb{E}^{\bullet}$ to a point $I^{\bullet}\in \mathfrak{N}_{\text{FT}}$ computes $\text{Ext}^{i}(I^{\bullet},I^{\bullet})_{0}$ which, by the local to global spectral sequence and Serre duality, is nonvanishing only for $0\leq i\leq 3$. Finally, we can use the vanishing cohomologies at degrees less than $0$ and greater than $3$ and trim the complex $\mathbb{E}^{\bullet}$ \cite[Lemma 2.10]{a17} (by replacing $E^{i}$ at its ends with the corresponding kernel and cokernel sheaves) and obtain a 4 term complex of vector bundles such that
\begin{equation}\label{vec-cplx}
\mathbb{E}^{\bullet}\cong^{q-isom}\{ E_{0}\to E_{1}\to E_{2}\to E_{3}\}.
\end{equation} 
Now by the above construction, we can see that $\mathbb{E}^{\bullet\vee}$ in \eqref{defobs-froz} satisfies the conditions of being a deformation-obstruction theory of perfect amplitude $[-2,1]$. 
\end{proof}
\subsection*{Deformation-obstruction theory of amplitude $[-1,0]$ over $\mathfrak{N}_{\text{HFT}}$}\label{section-fix}\label{sec9}
\begin{lemma}\label{self-dual}
The complex $\mathbb{E}^{\bullet}$ in Theorem \ref{reldef-f} is self-dual in the sense of \cite{a1}. In other words, there exists a quasi-isomorphism of complexes $\mathbb{E}^{\bullet}\xrightarrow{\cong}\mathbb{E}^{\bullet\vee}[1]$. 
\end{lemma}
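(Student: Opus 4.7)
The proof should rest on two standard ingredients. First, the trace-composition pairing on the perfect complex $\mathbb{I}^\bullet$,
$$R\mathscr{H}om(\mathbb{I}^\bullet,\mathbb{I}^\bullet)_0\otimes^{L}R\mathscr{H}om(\mathbb{I}^\bullet,\mathbb{I}^\bullet)_0\xrightarrow{\operatorname{tr}\circ\operatorname{comp}}\mathcal{O}_{X\times\mathfrak{H}},$$
is derived non-degenerate and identifies $R\mathscr{H}om(\mathbb{I}^\bullet,\mathbb{I}^\bullet)_0$ with its derived dual. Second, relative Grothendieck-Serre duality for the smooth proper morphism $\pi_{\mathfrak{H}}\colon X\times\mathfrak{H}\to\mathfrak{H}$ of relative dimension three (writing $\mathfrak{H}:=\mathfrak{H}^{(P_F,r,n)}_{s,\operatorname{FT}}(\tau')$) gives $(R\pi_{\mathfrak{H}*}G)^{\vee}\simeq R\pi_{\mathfrak{H}*}(G^{\vee}\otimes\omega_{\pi_{\mathfrak{H}}})[3]$ for every perfect $G$, and the Calabi-Yau hypothesis ensures $\omega_{\pi_{\mathfrak{H}}}\cong\pi_X^{*}\omega_X\cong\mathcal{O}_{X\times\mathfrak{H}}$.

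The plan is to apply Serre duality to the derived representative $\mathbb{E}^{\bullet}\simeq R\pi_{\mathfrak{H}*}\!\left(R\mathscr{H}om(\mathbb{I}^\bullet,\mathbb{I}^\bullet)_0\otimes\pi_X^{*}\omega_X\right)[2]$ furnished by Proposition \ref{E-dot-defn}. Taking $G=R\mathscr{H}om(\mathbb{I}^\bullet,\mathbb{I}^\bullet)_0\otimes\pi_X^{*}\omega_X$, one computes
$$G^{\vee}\otimes\omega_{\pi_{\mathfrak{H}}}\simeq R\mathscr{H}om(\mathbb{I}^\bullet,\mathbb{I}^\bullet)_0^{\vee}\otimes\pi_X^{*}\omega_X^{-1}\otimes\pi_X^{*}\omega_X\simeq R\mathscr{H}om(\mathbb{I}^\bullet,\mathbb{I}^\bullet)_0$$
after invoking the trace-pairing identification. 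Therefore
$$\mathbb{E}^{\bullet\vee}\simeq R\pi_{\mathfrak{H}*}\!\left(R\mathscr{H}om(\mathbb{I}^\bullet,\mathbb{I}^\bullet)_0\right)[3-2]=R\pi_{\mathfrak{H}*}\!\left(R\mathscr{H}om(\mathbb{I}^\bullet,\mathbb{I}^\bullet)_0\right)[1].$$
Combined with $\mathbb{E}^{\bullet}\simeq R\pi_{\mathfrak{H}*}(R\mathscr{H}om(\mathbb{I}^\bullet,\mathbb{I}^\bullet)_0)[2]$, obtained by applying the Calabi-Yau condition once more to Proposition \ref{E-dot-defn}, a shift by $[1]$ on the previous line yields exactly the desired quasi-isomorphism $\mathbb{E}^{\bullet}\xrightarrow{\cong}\mathbb{E}^{\bullet\vee}[1]$. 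Numerically the shifts combine as $3-2=1$: the Serre-duality shift $[3]$ together with the dualization shift $[-2]$ produces the net $[1]$ demanded by the statement.

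The main obstacle I anticipate is not this derived-category identity but rather lifting it to a genuine quasi-isomorphism between the specific locally free resolutions chosen in the proof of Proposition \ref{E-dot-defn}. Because $\mathbb{E}^{\bullet}$ and $\mathbb{E}^{\bullet\vee}[1]$ are bounded complexes of locally free (hence projective) sheaves concentrated in the same amplitude, such a lift is abstractly guaranteed; what requires verification is that the trace pairing and Serre duality are natural enough to descend through the truncation and higher direct image operations used to define the $\mathbb{E}^{j}$, so that the resulting isomorphism is \emph{self-symmetric} in the sense of \cite{a1}, not merely an abstract quasi-isomorphism. This naturality is a standard, if tedious, consequence of the functoriality of the trace and of Grothendieck duality, and should be spelled out carefully before the complex $\mathbb{E}^{\bullet}$ is used downstream to cut down the $[-2,1]$-amplitude obstruction theory to one of amplitude $[-1,0]$ on $\mathfrak{H}^{(P_F,r,n)}_{s,\operatorname{HFT}}(\tau')$.
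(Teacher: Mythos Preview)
Your proof is correct and follows essentially the same route as the paper: Grothendieck duality for $\pi_{\mathfrak{H}}$ combined with the self-duality of $R\mathscr{H}om(\mathbb{I}^\bullet,\mathbb{I}^\bullet)_0$ via the trace pairing and the Calabi-Yau condition $\omega_X\cong\mathcal{O}_X$, yielding $\mathbb{E}^{\bullet\vee}\simeq\mathbb{E}^{\bullet}[-1]$. Your additional remarks on lifting the derived isomorphism to the specific locally free model and on self-symmetry go beyond what the paper spells out, but the core argument is the same.
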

\begin{proof} The derived dual of $\mathbb{E}^{\bullet}$ over $\mathfrak{N}_{\text{FT}}$ is given by $\mathbb{E}^{\bullet\vee}:=R\mathscr{H}om(\mathbb{E}^{\bullet},\mathcal{O}_{\mathfrak{N}_{\text{FT}}}).$ Use Grothendieck duality and obtain the following isomorphisms:
\begin{align}\label{duality}
&
R\mathscr{H}om(\mathbb{E}^{\bullet},\mathcal{O}_{\mathfrak{N}_{\text{FT}}})\cong R\pi_{\mathfrak{N}\,\ast}(R\mathscr{H}om_{X\times \mathfrak{N}_{\text{FT}}}\left(R\mathscr{H}om(\mathbb{I}^{\bullet},\mathbb{I}^{\bullet})_{0}\otimes \pi_{X}^{*}\omega_{X}\right)[2],\pi^{!}\mathcal{O}_{\mathfrak{N}_{\text{FT}}}))\notag\\
&
\cong  R\pi_{\mathfrak{N}\,\ast}(R\mathscr{H}om_{X\times \mathfrak{N}_{\text{FT}}}\left(R\mathscr{H}om(\mathbb{I}^{\bullet},\mathbb{I}^{\bullet})_{0}\otimes \pi_{X}^{*}\omega_{X}\right)[2],\pi_{X}^{\ast}\omega_{X}[3])\notag\\
&
\cong R\pi_{\mathfrak{N}\,\ast}R\mathscr{H}om(\mathcal{O}_{X \times \mathfrak{N}_{\text{FT}}}, R\mathscr{H}om(\mathbb{I}^{\bullet},\mathbb{I}^{\bullet})_{0}[1]) \cong \mathbb{E}^{\bullet}[-1].\notag\\
\end{align} 
Hence we conclude that $\mathbb{E}^{\bullet\vee}[1]\cong \mathbb{E}^{\bullet}$; The second isomorphism in \eqref{duality} is obtained using the fact that $X$ is a Calabi-Yau threefold and $ \omega_{X}\cong \mathcal{O}_{X}$.
\end{proof}
Let $\pi: \mathfrak{N}_{\text{HFT}}\to \mathfrak{N}_{\text{FT}}$ denote the natural projection map (c.f. Diagram \eqref{diagram}). Note that as was mentioned in Definition \ref{perfect-amp}, the following isomorphism holds true in $\mathcal{D}^{b}(\mathfrak{N}_{\text{HFT}})$$$\pi^{*}\mathbb{L}^{\bullet}_{\mathfrak{N}_{\text{FT}}}= \mathcal{I}/\mathcal{I}^{2}\rightarrow \Omega_{\mathfrak{A}}\mid_{ \mathfrak{N}_{\text{HFT}}}\rightarrow  \Omega_{\pi}.$$ Now we study the properties of  the pullback of $\mathbb{E}^{\bullet\vee}$ to $\mathfrak{N}_{\text{HFT}}$.

\begin{prop}\label{existence}
Let $\mathcal{U}=\coprod_{\alpha}\mathcal{U}_{\alpha}$ be an atlas of affine schemes for $\mathfrak{N}_{\text{HFT}}$. Fix one of the maps $q:\mathcal{U}_{\alpha}\rightarrow \mathfrak{N}_{\text{HFT}}$. The following isomorphism holds true in $\mathcal{D}^{b}(\mathcal{U}_{\alpha})$:
\begin{equation}\label{existence2}
\text{Hom}(q^{\ast}\Omega_{\pi}, q^{\ast}(\pi^{*}\mathbb{E}^{\bullet}_{\mathfrak{N}_{\text{FT}}}[1]))\cong \text{Hom}(q^{\ast}\Omega_{\pi}, q^{\ast}(\pi^{*}\mathbb{L}^{\bullet}_{\mathfrak{N}_{\text{FT}}}[1])).
\end{equation}
Hence, in particular it is true that there exists a nontrivial lift $g_{\alpha}: q^{\ast}\Omega_{\pi}\rightarrow q^{\ast}(\pi^{*}\mathbb{E}^{\bullet}_{\mathfrak{N}_{\text{FT}}}[1])$.
\end{prop}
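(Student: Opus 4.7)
The plan is to derive \eqref{existence2} from the fact that the mapping cone of the obstruction morphism from Theorem \ref{reldef-f} has sufficiently negative cohomological amplitude, so that test maps out of the sheaf $q^{*}\Omega_{\pi}$ (which lives in degree $0$) cannot see the discrepancy between $\mathbb{E}^{\bullet}$ and $\mathbb{L}^{\bullet}$. Once the isomorphism is in hand, the lift $g_{\alpha}$ will be produced by transporting the canonical connecting morphism of the relative cotangent triangle \eqref{cot-cpx} across that isomorphism.

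First I would record that $\pi$ is a $\operatorname{GL}_{r}(\mathbb{C})$-torsor by Proposition \ref{local-structure} and so is smooth of relative dimension $r^{2}$; in particular $\Omega_{\pi}$ is a locally free coherent sheaf concentrated in degree $0$, and the same holds after pullback to the affine scheme $\mathcal{U}_{\alpha}$. Next, let $C^{\bullet}$ denote the mapping cone of $ob^{t}\colon \mathbb{E}^{\bullet\vee}\to \mathbb{L}^{\bullet}_{\mathfrak{H}^{(P_{F},r,n)}_{s,\operatorname{FT}}(\tau')}$ from Theorem \ref{reldef-f}. The long exact sequence of cohomology attached to the distinguished triangle
\begin{equation*}
\mathbb{E}^{\bullet\vee}\xrightarrow{ob^{t}} \mathbb{L}^{\bullet}_{\mathfrak{H}^{(P_{F},r,n)}_{s,\operatorname{FT}}(\tau')} \to C^{\bullet}\to \mathbb{E}^{\bullet\vee}[1],
\end{equation*}
combined with the hypotheses that $h^{0}(ob^{t})$ and $h^{1}(ob^{t})$ are isomorphisms while $h^{-1}(ob^{t})$ is surjective, forces $h^{i}(C^{\bullet})=0$ for every $i\geq -1$; thus $C^{\bullet}$ is concentrated in degrees $\leq -2$.

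I would then pull back the triangle by the flat map $\pi\circ q$ (flatness of $\pi$ follows from its torsor property) and apply $\operatorname{RHom}(q^{*}\Omega_{\pi},-)$ to the $[1]$-shift, obtaining the long exact sequence
\begin{equation*}
\operatorname{Ext}^{-1}(q^{*}\Omega_{\pi}, q^{*}\pi^{*}C^{\bullet}[1]) \to \operatorname{Hom}(q^{*}\Omega_{\pi}, q^{*}\pi^{*}\mathbb{E}^{\bullet\vee}[1]) \to \operatorname{Hom}(q^{*}\Omega_{\pi}, q^{*}\pi^{*}\mathbb{L}^{\bullet}[1]) \to \operatorname{Hom}(q^{*}\Omega_{\pi}, q^{*}\pi^{*}C^{\bullet}[1]).
\end{equation*}
Both outer terms vanish, because $q^{*}\Omega_{\pi}$ is a locally free sheaf in degree $0$ while $q^{*}\pi^{*}C^{\bullet}[1]$ has cohomology concentrated in degrees $\leq -3$, and there are no morphisms (nor $\operatorname{Ext}^{-1}$'s) from a sheaf in degree $0$ into such a strictly negatively concentrated complex on an affine scheme. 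This yields the desired isomorphism \eqref{existence2}, with the identification between $\mathbb{E}^{\bullet}[1]$ and $\mathbb{E}^{\bullet\vee}[1]$ being implemented by the self-duality quasi-isomorphism of Lemma \ref{self-dual}.

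For the last assertion, I would exploit the distinguished triangle \eqref{cot-cpx}, which supplies a canonical connecting map $\Omega_{\pi}\to \pi^{*}\mathbb{L}^{\bullet}_{\mathfrak{H}^{(P_{F},r,n)}_{s,\operatorname{FT}}(\tau')}[1]$; this map is nontrivial since the relative cotangent triangle of a smooth morphism of positive relative dimension is nonsplit. Pulling back by $q$ and pushing through the isomorphism just proved produces the sought-after lift $g_{\alpha}\colon q^{*}\Omega_{\pi}\to q^{*}\pi^{*}\mathbb{E}^{\bullet}[1]$, and nontriviality is preserved because the comparison is an isomorphism. The main technical obstacle is the careful bookkeeping of amplitudes after pullback and after the shift, together with ensuring that the dual-versus-nondual labeling of $\mathbb{E}^{\bullet}$ is made consistent with the obstruction map $ob^{t}$ via Lemma \ref{self-dual}; the existence, but not the uniqueness or functoriality, of the lift is all that is needed for the subsequent truncation arguments, so the proposition can remain a purely local (Zariski) statement.
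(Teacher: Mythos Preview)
Your argument is correct and follows essentially the same route as the paper: both take the cone of the pulled-back obstruction morphism $ob^{t}$, use the cohomological properties from Theorem \ref{reldef-f} to confine its cohomology to degrees $\leq -2$, and then invoke affineness of $\mathcal{U}_{\alpha}$ together with the fact that $q^{*}\Omega_{\pi}$ is free in degree $0$ to kill the outer terms of the long exact $\operatorname{Hom}$-sequence. Your explicit handling of the self-duality identification via Lemma \ref{self-dual} and of the source of the nontrivial element via the connecting map in \eqref{cot-cpx} spells out two points the paper leaves implicit.
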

\begin{proof} Consider the exact triangle
\begin{equation} \label{euler-t-2}
q^{\ast}(\pi^{*}\mathbb{E}^{\bullet\vee})\xrightarrow{ob^{t}_{\alpha}} q^{\ast}(\pi^{*}\mathbb{L}^{\bullet}_{\mathfrak{N}_{\text{FT}}})\rightarrow \text{Cone}(ob^{t}_{\alpha}),
\end{equation}
induced by pulling back the deformation-obstruction theory in Theorem \ref{reldef-f} via $\pi\circ q:\mathcal{U}_{\alpha}\rightarrow \mathfrak{N}_{\text{FT}}$.
Note that $\Omega_{\pi}\cong (\mathfrak{g}l_{r}(\mathbb{C}))^{\vee}\otimes \mathcal{O}_{\mathfrak{N}_{\text{HFT}}}.$ Hence, $q^{*}\Omega_{\pi}\cong (\mathfrak{g}l_{r}(\mathbb{C}))^{\vee}\otimes \mathcal{O}_{\mathcal{U}_{\alpha}}$. Now apply $\text{Hom}^{0}(q^{*}\Omega_{\pi},)$ to the exact triangle \eqref{euler-t-2} and obtain
\begin{align}
&
\text{Hom}^{0}(q^{*}\Omega_{\pi}, \text{Cone}(ob^{t}_{\alpha}))\rightarrow \text{Hom}^{0}(q^{*}\Omega_{\pi}, q^{\ast}(\pi^{*}\mathbb{E}^{\bullet\vee})[1])\notag\\
&
\rightarrow \text{Hom}^{0}(q^{*}\Omega_{\pi}, q^{\ast}(\pi^{*}\mathbb{L}^{\bullet})[1])\rightarrow  \text{Hom}^{0}(q^{*}\Omega_{\pi}, \text{Cone}(ob^{t}_{\alpha})[1]).
\end{align}
We prove the statement of the proposition by showing that
\begin{equation}\label{vanish-O}
\text{Hom}^{0}(q^{*}\Omega_{\pi}, \text{Cone}(ob^{t}_{\alpha}))\cong 0\cong \text{Hom}^{0}(q^{*}\Omega_{\pi}, \text{Cone}(ob^{t}_{\alpha})[1]).
\end{equation}
To prove \eqref{vanish-O}, we will use the fact that by construction $q^{*}\Omega_{\pi}\cong (\mathfrak{g}l_{r}(\mathbb{C}))^{\vee}\otimes \mathcal{O}_{\mathcal{U}_{\alpha}}$. Therefore, our proof reduces to showing that 
\begin{equation}\label{latter}
\text{Hom}^{0}(\mathcal{O}_{\mathcal{U}_{\alpha}},\text{Cone}(ob^{t}_{\alpha})[l])\cong 0,
\end{equation}
for $l=1,0$. In order to prove \eqref{latter}, first we will show that the complex $\text{Cone}(ob^{t}_{\alpha})$ has no cohomologies in degrees greater than $-1$.  For this, consider the long exact sequence of cohomology induced by the exact triangle \eqref{euler-t-2}:
\begin{align} 
&
0\rightarrow\cancel{\text{h}^{-3}(q^{\ast}(\pi^{*}\mathbb{E}^{\bullet\vee}))}\rightarrow \cancel{\text{h}^{-3}(q^{\ast}(\pi^{*}\mathbb{L}^{\bullet}))}\rightarrow \text{h}^{-3}(\text{Cone}(ob^{t}_{\alpha}))\xrightarrow{\cong}\text{h}^{-2}(q^{\ast}(\pi^{*}\mathbb{E}^{\bullet\vee}))\notag\\
&\rightarrow\cancel{\text{h}^{-2}(q^{\ast}(\pi^{*}\mathbb{L}^{\bullet}))}\rightarrow\text{h}^{-2}(\text{Cone}(ob^{t}_{\alpha}))\rightarrow \text{h}^{-1}(q^{\ast}(\pi^{*}\mathbb{E}^{\bullet\vee}))
\twoheadrightarrow \text{h}^{-1}(q^{\ast}(\pi^{*}\mathbb{L}^{\bullet}))\notag\\
&
\rightarrow \cancel{\text{h}^{-1}(\text{Cone}(ob^{t}_{\alpha}))}\rightarrow \text{h}^{0}(q^{\ast}(\pi^{*}\mathbb{E}^{\bullet\vee}))\xrightarrow{\cong} \text{h}^{0}(q^{\ast}(\pi^{*}\mathbb{L}^{\bullet}))\rightarrow\cancel{\text{h}^{0}(\text{Cone}(ob^{t}_{\alpha}))} \notag\\
&
\rightarrow \text{h}^{1}(q^{\ast}(\pi^{*}\mathbb{E}^{\bullet\vee}))\xrightarrow{\cong} \text{h}^{1}(q^{\ast}(\pi^{*}\mathbb{L}^{\bullet}))\rightarrow\cancel{\text{h}^{1}(\text{Cone}(ob^{t}_{\alpha}))}\notag\\
&
\rightarrow \cancel{\text{h}^{2}(q^{\ast}(\pi^{*}\mathbb{E}^{\bullet\vee}))}\rightarrow\cancel{\text{h}^{2}(q^{\ast}(\pi^{*}\mathbb{L}^{\bullet}))}\rightarrow0,\notag\\
\end{align}
where we have used the fact that $q^{\ast}(\pi^{*}\mathbb{L}^{\bullet})$ and $q^{\ast}(\pi^{*}\mathbb{E}^{\bullet\vee})$ are perfect complexes of amplitudes $[-1,1]$ and $[-2,1]$ respectively, and $\text{h}^{i}(ob^{t}_{\alpha})$ is an isomorphism for $i=0,1$ and a surjection for $i=-1$. Hence, we conclude that $\text{Cone}(ob^{t}_{\alpha}))$ only has cohomologies on degrees $-2$ and $-3$ and so,  we can replace the complex $\text{Cone}(ob^{t}_{\alpha})$ with a representative complex $\textbf{A}^{\bullet}$ such that $\textbf{A}^{k}=0$ for $k\geq -1$. Therefore, the proof of \eqref{latter} follows by showing that $\text{Hom}^{0}(\mathcal{O}_{\mathcal{U}_{\alpha}}, \textbf{A}^{\bullet}[l])\cong 0$ for all $l\geq 0$. 

Now use the general fact that given complexes $\mathcal{T}$ and $\textbf{F}$, in order to compute the Grothendieck hypercohomolgy $\text{Hom}^{i}(\mathcal{T},\textbf{F})$, one replaces $\textbf{F}$ with its injective resolution $\textbf{F}\rightarrow \textbf{K}^{\bullet}$. Moreover replacing $\textbf{F}$ with $\textbf{K}^{\bullet}$ is equivalent with replacing $\mathcal{T}$ with $\textbf{P}^{\bullet}$ such that $\textbf{P}^{\bullet}\rightarrow \mathcal{T}$ is a projective resolution. On the other hand, locally over $\mathcal{U}_{\alpha}$, $\mathcal{O}_{\mathcal{U}_{\alpha}}$ is given as a free and in particular projective module hence its projective resolution consists of one term only and we can use $\mathcal{O}_{\mathcal{U}_{\alpha}}$ itself, instead of the complex $\textbf{P}^{\bullet}$. Now it is seen that, $\text{Hom}^{0}(\mathcal{O}_{\mathcal{U}_{\alpha}},\textbf{A}^{\bullet}[l])\cong 0$ since $\mathcal{O}_{\mathcal{U}_{\alpha}}$ is a flasque sheaf sitting in degree zero. This proves the isomorphism \eqref{latter}, which implies the isomorphism \eqref{vanish-O}, which proves the statement of Proposition \ref{existence}.
\end{proof}

\begin{theorem}\label{2step-trunc}
Consider the 4-term deformation-obstruction theory $\mathbb{E}^{\bullet\vee}$ of perfect amplitude $[-2,1]$ over $\mathfrak{N}_{\text{FT}}$. Locally in the Zariski topology over $\mathfrak{N}_{\text{HFT}}$ there exists a perfect two-term deformation-obstruction theory of perfect amplitude $[-1,0]$ which is obtained from the suitable local truncation of the pullback $\pi^{*}\mathbb{E}^{\bullet\vee}$ via the map $\pi:\mathfrak{N}_{\text{HFT}}\rightarrow \mathfrak{N}_{\text{FT}}$.
\end{theorem}
\begin{proof} We prove this theorem by cohomologically truncating the 4-term complex $\pi^{*}\mathbb{E}^{\bullet\vee}$ on right side and then left side; By Theorem \ref{reldef-f}, $\mathbb{E}^{\bullet\vee}\xrightarrow{ob} \mathbb{L}^{\bullet}_{\mathfrak{N}_{\text{FT}}}$ is a perfect deformation-obstruction theory of amplitude $[-2,1]$ for $\mathfrak{N}_{\text{FT}}$, such that $h^{0}(ob),h^{1}(ob)$ are isomorphisms and $h^{-1}(ob)$ is an epimorphism.  Let $q:\mathcal{U}_{\alpha}\rightarrow \mathfrak{N}_{\text{HFT}}$ and $q':\mathcal{U}_{\beta}\rightarrow\mathfrak{N}_{\text{HFT}}$ be given as fixed affine charts over $\mathfrak{N}_{\text{HFT}}$ such that the isomorphism in Proposition \ref{existence} holds true over $\mathcal{U}_{\alpha}$ and $\mathcal{U}_{\beta}$. Let $f_{\alpha}: \mathcal{U}_{\alpha}\times_{q\times q'} \mathcal{U}_{\beta}\rightarrow \mathcal{U}_{\alpha}$ and $f_{\beta}: \mathcal{U}_{\alpha}\times_{q\times q'} \mathcal{U}_{\beta}\rightarrow \mathcal{U}_{\beta}$ be the corresponding projection to $\mathcal{U}_{\alpha}$ and $\mathcal{U}_{\beta}$. Then $$ \text{Hom}^{0}(f_{\alpha}^{*}(q^{*}\Omega_{\pi}, q^{\ast}(\pi^{*}\mathbb{E}^{\bullet\vee})[1]))\cong \text{Hom}^{0}(f_{\beta}^{*}(q^{*}\Omega_{\pi}, q^{\ast}(\pi^{*}\mathbb{L}^{\bullet\vee})[1])).$$Moreover, the same statement is true for $f_{\alpha}$ and $q$ replaced by $f_{\beta}$ and $q'$.

Because $\mathfrak{N}_{\text{HFT}}$ is a quasi-projective scheme, then an intersection of affine subschemes of $\mathfrak{N}_{\text{HFT}}$ is affine, which implies that Proposition \ref{existence} holds true over $\mathcal{U}_{\alpha}\times_{q\times q'}\mathcal{U}_{\beta}$ also. Now fix $\mathcal{U}_{\alpha}$. By the local existence of the map $g_{\alpha}$ in Proposition \ref{existence}, there exists a commutative diagram over $\mathcal{U}_{\alpha}$:
\begin{equation}\label{can0-cot}
\begin{tikzpicture}
back line/.style={densely dotted}, 
cross line/.style={preaction={draw=white, -, 
line width=6pt}}] 
\matrix (m) [matrix of math nodes, 
row sep=1em, column sep=1.5em, 
text height=1.5ex, 
text depth=0.25ex]{ 
\pi^{*}\mathbb{E}^{\bullet\vee}\mid_{\mathcal{U}_{\alpha}}&\text{Cone}(g_{\alpha})[-1]&\Omega_{\pi}\mid_{\mathcal{U}_{\alpha}}&\pi^{*}\mathbb{E}^{\bullet\vee}[1]\mid_{\mathcal{U}_{\alpha}}&\text{Cone}(g_{\alpha})\\
\pi^{*}\mathbb{L}^{\bullet}_{\mathfrak{N}_{\text{FT}}}\mid_{\mathcal{U}_{\alpha}}&\mathbb{L}^{\bullet}_{\mathfrak{N}_{\text{HFT}}}\mid_{\mathcal{U}_{\alpha}}&\Omega_{\pi}\mid_{\mathcal{U}_{\alpha}}&\pi^{*}\mathbb{L}^{\bullet}_{\mathfrak{N}_{\text{FT}}}[1]\mid_{\mathcal{U}_{\alpha}}&\mathbb{L}^{\bullet}_{\mathfrak{N}_{\text{HFT}}}[1]\mid_{\mathcal{U}_{\alpha}}\\};
\path[->]
(m-1-1) edge node [right] {$\pi^{*}(ob)\mid_{\mathcal{U}_{\alpha}}$} (m-2-1)
(m-1-1) edge (m-1-2)
(m-1-2) edge (m-1-3)
(m-1-2) edge node [right] {$ob'$} (m-2-2)
(m-1-4) edge (m-1-5)
(m-1-5) edge (m-2-5)
(m-2-1) edge (m-2-2)
(m-1-3) edge node [above] {$g_{\alpha}$} (m-1-4)
(m-2-2) edge (m-2-3)
(m-2-3) edge (m-2-4)
(m-2-4) edge (m-2-5)
(m-1-3) edge node [left] {$\text{id}$} (m-2-3)
(m-1-4) edge node [right] {$\pi^{*}ob[1]\mid_{\mathcal{U}_{\alpha}}$}(m-2-4);
\end{tikzpicture}
\end{equation}  
Now we show that the map $ob':\text{Cone}(g_{\alpha})[-1]\rightarrow \mathbb{L}^{\bullet}_{\mathfrak{N}_{\text{HFT}}}\mid_{\mathcal{U}_{\alpha}}$ defines a perfect 3-term deformation-obstruction theory of amplitude $[-2,0]$ for $\mathfrak{N}_{\text{HFT}}$ over $\mathcal{U}_{\alpha}$.

We show that $\text{Cone}(g)[-1]$ is concentrated in degrees $-2$, $-1$ and 0. Moreover, $h^{0}(ob')$ is an isomorphism and $h^{-1}(ob')$ is an epimorphism. The proof uses the long exact sequence of cohomologies. For $h^{-1}(ob')$ one obtains:

\begin{equation}\label{h(-1)-pf}
\begin{tikzpicture}
back line/.style={densely dotted}, 
cross line/.style={preaction={draw=white, -, 
line width=6pt}}] 
\matrix (m) [matrix of math nodes, 
row sep=1em, column sep=1.5em, 
text height=1.5ex, 
text depth=0.25ex]{ 
0&h^{-1}(\pi^{*}\mathbb{E}^{\bullet\vee}\mid_{\mathcal{U}_{\alpha}})\cong \pi^{*}h^{-1}(\mathbb{E}^{\bullet\vee}\mid_{\mathcal{U}_{\alpha}})&h^{-1} (\text{Cone}(g_{\alpha})[-1])&0\\
0&h^{-1}(\pi^{*}\mathbb{L}^{\bullet}_{\mathfrak{N}_{\text{FT}}}\mid_{\mathcal{U}_{\alpha}})\cong \pi^{*}h^{-1}(\mathbb{L}^{\bullet}_{\mathfrak{N}_{\text{FT}}}\mid_{\mathcal{U}_{\alpha}})&h^{-1}(\mathbb{L}^{\bullet}_{\mathfrak{N}_{\text{HFT}}}\mid_{\mathcal{U}_{\alpha}})&0\\};
\path[->]
(m-1-1) edge (m-2-1)
(m-1-1) edge (m-1-2)
(m-1-2) edge node [above] {$\cong$} (m-1-3)
(m-2-1) edge (m-2-2)
(m-1-3) edge (m-1-4)
(m-2-2) edge node [above] {$\cong$} (m-2-3)
(m-2-3) edge (m-2-4)
(m-1-3) edge node [right] {$h^{-1}(ob')$} (m-2-3)
(m-1-4) edge (m-2-4);
\path[->>]
(m-1-2) edge node [right] {$\pi^{*}(h^{-1}(ob))$} (m-2-2);
\end{tikzpicture},
\end{equation}
where the top horizontal isomorphism is due to the fact that $$\text{Cone}(g_{\alpha})[-1]: \pi^{*}E^{-2}\rightarrow \pi^{*}E^{-1}\rightarrow\pi^{*} E^{0}\oplus \Omega_{\pi}\rightarrow \pi^{*}E^{1},$$where $E^{i}$ correspond to the terms of the complex $\mathbb{E}^{\bullet\vee}\mid_{\mathcal{U}_{\alpha}}$. The vanishings on the left and right of the top and bottom rows of \eqref{h(-1)-pf} are due to the fact that $\Omega_{\pi}$ is a sheaf concentrated in degree zero. By Theorem \ref{reldef-f}, the second vertical map (from left) is a surjection and by commutativity of the diagram \eqref{h(-1)-pf} the map $h^{-1}(ob')$ is surjective. In degrees 0 and 1 one obtains:  
\begin{equation}\label{can-cot}
\begin{tikzpicture}
back line/.style={densely dotted}, 
cross line/.style={preaction={draw=white, -, 
line width=6pt}}] 
\matrix (m) [matrix of math nodes, 
row sep=2.em, column sep=.5em, 
text height=1.5ex, 
text depth=0.25ex]{ 
0&\pi^{*}\text{h}^{0}(\mathbb{E}^{\bullet\vee}\mid_{\mathcal{U}_{\alpha}})&\text{h}^{0}(\text{Cone}(g_{\alpha})[-1])&\Omega_{\pi}\mid_{\mathcal{U}_{\alpha}}&\pi^{*}\text{h}^{1}(\mathbb{E}^{\bullet\vee}\mid_{\mathcal{U}_{\alpha}})&\text{h}^{1}(\text{Cone}(g)[-1])&0\\
0&\pi^{*}\text{h}^{0}(\mathbb{L}^{\bullet}_{\mathfrak{N}_{s,\text{FT}}}\mid_{\mathcal{U}_{\alpha}})&\text{h}^{0}(\mathbb{L}^{\bullet}_{\mathfrak{N}_{\text{HFT}}}\mid_{\mathcal{U}_{\alpha}})&\Omega_{\pi}\mid_{\mathcal{U}_{\alpha}}&\pi^{*}\text{h}^{1}(\mathbb{L}^{\bullet}_{\mathfrak{N}_{\text{FT}}}\mid_{\mathcal{U}_{\alpha}})&\text{h}^{1}(\mathbb{L}^{\bullet}_{\mathfrak{N}_{\text{HFT}}}\mid_{\mathcal{U}_{\alpha}})&0\\};
\path[->]
(m-1-1) edge (m-2-1)
(m-1-1) edge (m-1-2)
(m-1-2) edge (m-1-3)
(m-1-2) edge node [right] {$\pi^{*}\text{h}^{0}(ob)\mid_{\mathcal{U}_{\alpha}}$} (m-2-2)
(m-1-4) edge (m-1-5)
(m-2-1) edge (m-2-2)
(m-1-3) edge (m-1-4)
(m-2-2) edge (m-2-3)
(m-2-3) edge (m-2-4)
(m-2-4) edge (m-2-5)
(m-1-3) edge node [right] {$\text{h}^{0}(ob')$} (m-2-3)
(m-1-4) edge node [right] {$\text{id}$} (m-2-4)
(m-1-5) edge node [right] {$\pi^{*}\text{h}^{1}(ob)\mid_{\mathcal{U}_{\alpha}}$} (m-2-5)
(m-1-6) edge node [right] {$\text{h}^{1}(ob')$} (m-2-6)
(m-1-7) edge (m-2-7)
(m-1-6) edge (m-1-7)
(m-2-6) edge (m-2-7)
(m-1-5) edge (m-1-6)
(m-2-5) edge (m-2-6);
\end{tikzpicture}.
\end{equation} 
In this diagram, $h^{1}(\mathbb{L}^{\bullet}_{\mathfrak{N}_{\text{HFT}}}\mid_{\mathcal{U}_{\alpha}})\cong 0$ since over $ \mathfrak{N}_{\text{HFT}}$ the truncated cotangent complex does not have cohomology in degree 1. Moreover, $\pi^{*}h^{1}(ob)\mid_{\mathcal{U}_{\alpha}}$ is an isomorphism by Theorem \ref{reldef-f}. Hence $h^{1}(ob')\cong 0$. Moreover by Theorem \ref{reldef-f}, $\pi^{*}h^{0}(ob)$ is an isomorphism, hence by the commutativity of the diagram \eqref{can-cot}, $h^{0}(ob')$ is an isomorphism.  

Now in order to obtain a perfect deformation-obstruction theory of amplitude $[-1,0]$, one needs to truncate the complex $\text{Cone}(g_{\alpha})[-1]$ on the left side, so that it does not have any cohomology in degree $-2$. The self-duality of $\mathbb{E}^{\bullet}$ gives a diagram of morphisms in the derived category:
 \begin{equation}\label{self-dual}
\begin{tikzpicture}
back line/.style={densely dotted}, 
cross line/.style={preaction={draw=white, -, 
line width=6pt}}] 
\matrix (m) [matrix of math nodes, 
row sep=1em, column sep=2.5em, 
text height=1.5ex, 
text depth=0.25ex]{ 
\mathbb{E}^{\bullet}&\mathbb{E}^{\bullet\vee}[1]&\text{Cone}(g_{\alpha})&\\
\text{T}_{\pi}\mid_{\mathcal{U}_{\alpha}}[1]&&&\\};
\path[->]
(m-1-1) edge node [left] {$g_{\alpha}^{\vee}$}(m-2-1)
(m-1-1) edge node [below] {$\text{q-isom}$}(m-1-2)
(m-1-1) edge node [above] {$\cong$} (m-1-2)
(m-1-2) edge (m-1-3);
\end{tikzpicture}
\end{equation}
and we need to show that the natural map
\begin{equation}\label{lift-it-again}
\text{Hom}^{0}_{\mathcal{D}(\mathcal{U}_{\alpha})}(\text{Cone}(g_{\alpha}),\text{T}_{\pi}\mid_{\mathcal{U}_{\alpha}}[1])\rightarrow\text{Hom}^{0}_{\mathcal{D}(\mathcal{U}_{\alpha})}(\mathbb{E}^{\bullet},\text{T}_{\pi}\mid_{\mathcal{U}_{\alpha}}[1])
\end{equation}
is an isomorphism.

Note that $\mathcal{U}_{\alpha}$ is affine and $\text{T}_{\pi}\mid_{\mathcal{U}_{\alpha}}[1]\cong \mathcal{O}_{\mathcal{U}_{\alpha}}^{\text{dim}(\mathfrak{g})}[1]$, so the statement reduces to knowing that $\text{h}^{1}(\text{Cone}(g_{\alpha})^{\vee})\rightarrow \text{h}^{1}(\mathbb{E}^{\bullet\vee})$ is an isomorphism. This follows, since $\mathbb{E}^{\bullet\vee}[1]\rightarrow \text{Cone}(g_{\alpha})$ is an isomorphism on $\text{h}^{-1}$ as shown in diagram \eqref{h(-1)-pf}.

By the isomorphism \eqref{lift-it-again}, it is now seen that  the map $g_{\alpha}^{\vee}$ in diagram \eqref{self-dual} factors through the map $$\text{Cone}(g_{\alpha})\rightarrow \text{T}_{\pi}\mid_{\mathcal{U}_{\alpha}}[1]$$ which is unique up to homotopy. We make a choice of such lift, shift the degrees by $-1$ and denote the resulting map by ${g'_{\alpha}}^{\vee}$. So we get an exact triangle
\begin{equation}\label{lift-g'}
\text{Cone}({g'_{\alpha}}^{\vee})[-1]\rightarrow \text{Cone}(g_{\alpha})[-1]\xrightarrow{{g'_{\alpha}}^{\vee}}T_{\pi}\mid_{\mathcal{U}_{\alpha}}\rightarrow \text{Cone}({g'_{\alpha}}^{\vee}).
\end{equation}
By this construction, we are now ready to define our ``\textit{two-fold}" truncated complex as
\begin{equation}\label{two-folded}
\mathbb{G}^{\bullet}\mid_{\mathcal{U}_{\alpha}}:=\text{Cone}({g'_{\alpha}}^{\vee})[-1]
\end{equation}
 In order to finish the proof of Theorem \ref{2step-trunc}, we need show that the complex $\mathbb{G}^{\bullet}\mid_{\mathcal{U}_{\alpha}}$ in \eqref{two-folded} defines a perfect deformation-obstruction theory of amplitude $[-1,0]$ for $\mathcal{U}_{\alpha}$.
By Theorem \ref{reldef-f}, the complex $\pi^*\mathbb{E}^{\bullet\vee}$ is given by a $4$-term complex of vector bundles of amplitude $[-2,1]$ 
as follows$$\pi^*\mathbb{E}^{\bullet\vee}\mid_{\mathcal{U}_{\alpha}}:=\pi^{*}E^{-2}\rightarrow \pi^{*}E^{-1}\rightarrow \pi^{*}E^{0} \rightarrow \pi^{*}E^{1}.$$Then, it is seen that by construction$$\mathbb{G}^{\bullet}\mid_{\mathcal{U}_{\alpha}}:=\pi^{*}E^{-2}\xrightarrow{d'} \pi^{*}E^{-1}\oplus T_{\pi}\mid_{\mathcal{U}_{\alpha}}\rightarrow \pi^{*}E^{0}\oplus \Omega_{\pi}\mid_{\mathcal{U}_{\alpha}}\xrightarrow{d} \pi^{*}E^{1},$$which means $\mathbb{G}^{\bullet}\mid_{\mathcal{U}_{\alpha}}$ has no cohomologies in degree $-2$ and $1$, however its cohomologies in degrees $-1$ and $0$ are the same as that of $\text{Cone}(g_{\alpha})[-1]$\footnote{Note that the $-1$ and $0$ cohomologies of $\text{Cone}(g_{\alpha})[-1]$ themselves, are equal to $-1$ and $0$ cohomologies of $\pi^*\mathbb{E}^{\bullet\vee}\mid_{\mathcal{U}_{\alpha}}$ by diagrams \eqref{h(-1)-pf} and \eqref{can-cot}.}. In other words, the $\mathbb{G}^{\bullet}\mid_{\mathcal{U}_{\alpha}}$ is the truncation of $\text{Cone}(g_{\alpha})[-1]$ on degree $-2$ and it can be seen that in the following commutative diagram, the top row is quasi-isomorphic to the bottom row\footnote{Note that by self-symmetry of $\mathbb{E}^{\bullet\vee}$ the morphism $d'$ in diagram \eqref{q-isom} is the dual of the morphism $d$. Therefore, since $\text{Ker}(d)$ is locally free then it is implied that $\text{Coker}(d')$ is also locally free.}:
\begin{equation}\label{q-isom}
\begin{tikzpicture}
back line/.style={densely dotted}, 
cross line/.style={preaction={draw=white, -, 
line width=6pt}}] 
\matrix (m) [matrix of math nodes, 
row sep=1em, column sep=3.25em, 
text height=1.5ex, 
text depth=0.25ex]{ 
\pi^{*}E^{-2}&\pi^{*}E^{-1}\oplus T_{\pi}\mid_{\mathcal{U}_{\alpha}}&\pi^{*}E^{0}\oplus \Omega_{\pi}\mid_{\mathcal{U}_{\alpha}}&\pi^{*}E^{1}\\
0&\text{Coker}(d')&\text{Ker}(d)&0\\};
\path[->]
(m-1-1) edge (m-2-1)
(m-1-2) edge (m-1-3)
(m-1-1) edge node [above] {$d'$} (m-1-2)
(m-1-3) edge node [above] {$d$} (m-1-4)
(m-1-2) edge (m-2-2)
(m-1-4) edge (m-2-4)
(m-2-3) edge (m-2-4)
(m-2-1) edge (m-2-2)
(m-2-2) edge (m-2-3)
(m-1-3) edge (m-2-3);
\end{tikzpicture},
\end{equation}
Now the composition of morphisms $\phi:=\mathbb{G}^{\bullet}\mid_{\mathcal{U}_{\alpha}}\rightarrow \text{Cone}(g)[-1] \rightarrow  \mathbb{L}^{\bullet}_{\mathfrak{N}_{\text{HFT}}}\mid_{\mathcal{U}_{\alpha}}$, together with the fact that by diagrams \eqref{h(-1)-pf} and \eqref{can-cot}, $h^{0}(ob')$ is an isomorphism and $h^{-1}(ob')$ is an epimorphism, shows that the morphism $\mathbb{G}^{\bullet}\mid_{\mathcal{U}_{\alpha}}\xrightarrow{\phi} \mathbb{L}^{\bullet}_{\mathfrak{N}_{\text{HFT}}}\mid_{\mathcal{U}_{\alpha}}$ satisfies the condition of being a deformation-obstruction theory, i.e we have that $h^0(\phi)$ is an isomorphism and $h^{-1}(\phi)$ is an epimorphism.
\end{proof}
\subsection{Construction of virtual fundamental class}\label{sec10}
Followed by the constructions in \cite[Definition 3.7]{a2}, in order to construct the virtual fundamental class, one needs to choose a local embedding over the moduli stack. Then one constructs the normal cone associated to a perfect local deformation-obstruction theory of amplitude $[-1,0]$ over this local embedding and proves that this normal cone is independent of the local embedding, i.e it remains invariant under the base change. The base-change invariance enables one to glue the local normal cones constructed over each local embedding, and obtain a global cone stack. Essentially a global virtual fundamental class is constructible from a global normal cone stack. 

For us, the gluability of the so-called local normal cone stacks depends on whether the local deformation-obstruction theories over each chart satisfy the condition of being given as a ``\textit{semi-perfect obstruction theory}" in the sense of Chang-Li \cite{a70}. However, we realize that the latter is achievable only if we assume a technical condition (c.f. Assumption \ref{descent assumption}). 

We will see later in Part II that this assumption is satisfied automatically, over the torus fixed loci of the moduli space, when we study highly frozen triples over a toric variety (c.f. Section \ref{sec11}).

\begin{defn}\label{inf-lift-def}
\cite[Definition 2.5]{a70} Let $\iota:T\rightarrow T'$ be a closed subscheme with $T'$ local Artinian. Let $\mathcal{I}$ be the ideal sheaf of $T$ in $T'$ and let $\mathfrak{m}$ be the ideal sheaf of the closed point of $T'$. We call $\iota$ a small extension if $\mathcal{I}\cdot \mathfrak{m}=0$. Now Let $\mathcal{M}$ be an Artin stack and $X\rightarrow \mathcal{M}$ a representable morphism of a DM stack to an Artin stack. Let $\mathcal{U}=\coprod_{\alpha}\mathcal{U}_{\alpha\in \Lambda}$ be a DM cover of $X$ by affine schemes. Consider $\mathcal{U}_{\alpha}\rightarrow \mathcal{M}$ for some $\alpha\in \Lambda$. Consider a small extension $(T,T',\mathcal{I},\mathfrak{m})$ that fits into a commutative square
\begin{equation}\label{inf-lift}
\begin{tikzpicture}
back line/.style={densely dotted}, 
cross line/.style={preaction={draw=white, -, 
line width=6pt}}] 
\matrix (m) [matrix of math nodes, 
row sep=1em
, column sep=3.em, 
text height=1.5ex, 
text depth=0.25ex]{ 
T&\mathcal{U}_{\alpha}\\
T'&\mathcal{M}\\};
\path[->]
(m-1-1) edge node [above] {$g$} (m-1-2)
(m-1-2)edge (m-2-2)
(m-2-1) edge (m-2-2)
(m-1-1) edge node [left] {$\iota$} (m-2-1);
\end{tikzpicture},
\end{equation}
so that the image of $g$ contains a closed point $p\in \mathcal{U}_{\alpha}$. Finding a morphism $g':T'\rightarrow \mathcal{U}_{\alpha}$ that commutes with the arrows in \eqref{inf-lift} is called ``\textit{infinitesimal lifting problem of} $\mathcal{U}_{\alpha}\slash\mathcal{M}$ \textit{at} $p$". 
\end{defn}
\begin{defn}\label{obstruction-lift}
Given a $\mathcal{U}\rightarrow \mathcal{M}$, let $\phi:\mathbb{G}^{\bullet}_{\mathcal{U}}\rightarrow\mathbb{L}^{\bullet}_{\mathcal{U}\slash \mathcal{M}}$ be a perfect obstruction theory. For the infinitesimal lifting problem in Definition \ref{inf-lift-def} we call the image
\begin{equation}
ob(\phi,g,T,T'):=\text{h}^{1}(\phi^{\vee})(\omega(g,T,T'))\in \text{Ext}^{1}(g^{*}\mathbb{G}^{\bullet}_{\mathcal{U}},\mathcal{I})=\text{Ob}(\phi,p)\otimes \mathcal{I}
\end{equation} 
the obstruction class (of $\phi$) to the lifting problem. 
\end{defn}
\begin{defn}
\cite[Definition 2.9]{a70} Given two (local) deformation-obstruction theories $\phi:\mathbb{G}^{\bullet}_{\mathcal{U}}\rightarrow\mathbb{L}^{\bullet}_{\mathcal{U}\slash \mathcal{M}}$ and $\phi':\mathbb{G}'^{\bullet}_{\mathcal{U}}\rightarrow\mathbb{L}^{\bullet}_{\mathcal{U}\slash \mathcal{M}}$ over $\mathcal{U}$ as in Definition \ref{obstruction-lift}, we call them $\nu$-equivalent if there exists an isomorphism of sheaves:
\begin{equation}
\psi:\text{h}^{1}(\mathbb{G}^{\bullet\vee}_{\mathcal{U}})\rightarrow \text{h}^{1}(\mathbb{G}'^{\bullet\vee}_{\mathcal{U}})
\end{equation} 
so that for every closed point $p\in \mathcal{U}_{\alpha}$ and any infinitesimal lifting problem of $\mathcal{U}_{\alpha}/\mathcal{M}$ at $p$ (as in Definition \ref{inf-lift-def}) we have $$\psi\mid_{p}(ob(\phi,g,T,T'))=ob(\phi',g,T,T')\in \text{Ob}(\phi',p)\otimes_{\textbf{k}}\mathcal{I}.$$
\end{defn}
Let $\mathcal{U}_{\alpha}, \mathcal{U}_{\beta}\subset \mathfrak{N}_{\text{HFT}}$ be given as two charts with the lifting property as in Proposition \ref{existence}. Let $\phi_{\alpha}: \mathbb{G}^{\bullet}_{\alpha}\rightarrow \mathbb{L}^{\bullet}_{\mathcal{U}_{\alpha}/\mathcal{M}}$ and $\phi_{\beta}: \mathbb{G}^{\bullet}_{\beta}\rightarrow \mathbb{L}^{\bullet}_{\mathcal{U}_{\beta}/\mathcal{M}}$. Moreover, let $\mathcal{U}_{\alpha\beta}=\mathcal{U}_{\alpha}\cap\mathcal{U}_{\beta}$, $f_{\alpha}:\mathcal{U}_{\alpha\beta}\hookrightarrow\mathcal{U}_{\alpha}$ and $f_{\beta}:\mathcal{U}_{\alpha\beta}\hookrightarrow\mathcal{U}_{\beta}$. \begin{prop}\label{semi-perf1}
Let $f_{\alpha}^{*}\phi_{\alpha}$ and $f_{\beta}^{*}\phi_{\beta}$ denote the pullback of $\phi_{\alpha}$ and $\phi_{\beta}$ to $\mathcal{U}_{\alpha\beta}$. Then $f_{\alpha}^{*}\phi_{\alpha}$ and $f_{\beta}^{*}\phi_{\beta}$ are $\nu$-equivalent over $\mathcal{U}_{\alpha\beta}$.
\end{prop}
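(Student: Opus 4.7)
The plan is to exploit the fact that on the overlap $\mathcal{U}_{\alpha\beta}$ both local obstruction theories $\mathbb{G}^{\bullet}_\alpha$ and $\mathbb{G}^{\bullet}_\beta$ arise from the same two-step cone construction applied to the intrinsic restricted data $\pi^{*}\mathbb{E}^{\bullet\vee}|_{\mathcal{U}_{\alpha\beta}}$ and $\Omega_{\pi}|_{\mathcal{U}_{\alpha\beta}}$, with the only apparent freedom being in the choices of the lifts $g_\alpha, g_\beta$ (and their duals $g_\alpha^{\vee}, g_\beta^{\vee}$). The key input is that, on the overlap, these lifts are canonically determined in the derived category, so the two local theories are canonically isomorphic there, and the $\nu$-equivalence will follow.

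First I would invoke Lemma \ref{existence'} applied to $\mathcal{U}_{\alpha\beta} = \mathcal{U}_\alpha \times_{q \times q'} \mathcal{U}_\beta$, which is affine because $\mathfrak{H}^{(P_{F},r,n)}_{s,\operatorname{HFT}}(\tau')$ is quasi-projective by Remark \ref{wandel-scheme}. This yields the identification
\[
\operatorname{Hom}_{\mathcal{D}(\mathcal{U}_{\alpha\beta})}\!\bigl(\Omega_{\pi},\pi^{*}\mathbb{E}^{\bullet\vee}[1]\bigr) \;\cong\; \operatorname{Hom}_{\mathcal{D}(\mathcal{U}_{\alpha\beta})}\!\bigl(\Omega_{\pi},\pi^{*}\mathbb{L}^{\bullet}_{\mathfrak{H}^{(P_{F},r,n)}_{s,\operatorname{FT}}(\tau')}[1]\bigr).
\]
Both $g_\alpha|_{\mathcal{U}_{\alpha\beta}}$ and $g_\beta|_{\mathcal{U}_{\alpha\beta}}$ are lifts of the common intrinsic connecting morphism supplied by the relative cotangent triangle \eqref{cot-cpx}; uniqueness of the lift on the right-hand side then forces $g_\alpha|_{\mathcal{U}_{\alpha\beta}} = g_\beta|_{\mathcal{U}_{\alpha\beta}}$ as morphisms in $\mathcal{D}(\mathcal{U}_{\alpha\beta})$, so that the first cones $\operatorname{Cone}(g_\alpha)[-1]$ and $\operatorname{Cone}(g_\beta)[-1]$ are isomorphic there compatibly with the maps $ob'$ appearing in diagram \eqref{can-cot}. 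A parallel application of Lemma \ref{cone-lift} shows that the dual lifts $g_\alpha^{\vee}$, $g_\beta^{\vee}$ also agree on the overlap, because the morphism $g^{\vee}$ out of $\mathbb{E}^{\bullet}$ is rigidly determined by the self-duality identification of Lemma \ref{self-dual}, and Lemma \ref{cone-lift} promotes it to a unique factorization through the (already-identified) cone. Taking cones once more produces a canonical quasi-isomorphism
\[
\Psi \colon \mathbb{G}^{\bullet}_\alpha|_{\mathcal{U}_{\alpha\beta}} \xrightarrow{\;\cong\;} \mathbb{G}^{\bullet}_\beta|_{\mathcal{U}_{\alpha\beta}}
\]
that commutes with the restrictions of $\phi_\alpha$ and $\phi_\beta$ to $\mathbb{L}^{\bullet}_{\mathcal{U}_{\alpha\beta}/\mathcal{M}}$. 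Dualizing $\Psi$ and taking $h^{1}$ furnishes the required sheaf isomorphism $\psi \colon h^{1}(\mathbb{G}^{\bullet\vee}_\alpha|_{\mathcal{U}_{\alpha\beta}}) \xrightarrow{\cong} h^{1}(\mathbb{G}^{\bullet\vee}_\beta|_{\mathcal{U}_{\alpha\beta}})$.

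Compatibility of $\psi$ with the obstruction classes should then drop out of Definition \ref{obstruction-lift}. For any closed point $p \in \mathcal{U}_{\alpha\beta}$ and any small extension $(T, T', \mathcal{I}, \mathfrak{m})$ with associated morphism $g \colon T \to \mathcal{U}_{\alpha\beta}$, the class $\omega(g,T,T') \in \operatorname{Ext}^{1}(g^{*}\mathbb{L}^{\bullet}_{\mathcal{U}_{\alpha\beta}/\mathcal{M}}, \mathcal{I})$ is intrinsic and does not depend on the choice of local obstruction theory. Since $\Psi$ intertwines $\phi_\alpha$ and $\phi_\beta$, the induced map $g^{*}\Psi$ intertwines $h^{1}(\phi_\alpha^{\vee})$ and $h^{1}(\phi_\beta^{\vee})$ at $p$, giving the required identity $\psi|_{p}(ob(\phi_\alpha, g, T, T')) = ob(\phi_\beta, g, T, T')$.

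The main technical hurdle I anticipate is making the word ``canonical'' in the construction of $\Psi$ fully precise: cones along morphisms that coincide only in the derived category are isomorphic only up to a non-unique automorphism, so one must verify that the ambiguity arising from the two successive cone constructions is either killed upon passage to $h^{1}$ of the dual, or resolved by working with a chosen model, for instance by strictifying $g_\alpha|_{\mathcal{U}_{\alpha\beta}}$ and $g_\beta|_{\mathcal{U}_{\alpha\beta}}$ to actually equal maps of complexes on a common locally free resolution. Following Chang--Li \cite{a70} (Section 2), it should suffice to establish chart-independence of the induced map on the $h^{1}$-obstruction sheaves; this is the diagram chase through the Atiyah-class construction of Theorem \ref{reldef-f} and the two-step truncation of Theorem \ref{2step-trunc} that carries the real technical content of the argument.
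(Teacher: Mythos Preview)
Your approach is essentially the same as the paper's. The paper carries out precisely the strictification you anticipate in your final paragraph: it chooses explicit chain-level homotopies $h_{\alpha\beta}$ and $h_{\alpha\beta}^{\vee}$ witnessing $f_{\alpha}^{*}g_{\alpha}\simeq f_{\beta}^{*}g_{\beta}$ (and their duals), builds the explicit cone comparison maps $J_{\alpha\beta}=\bigl(\begin{smallmatrix}\operatorname{id}&h_{\alpha\beta}\\0&\operatorname{id}\end{smallmatrix}\bigr)$ and $J_{\alpha\beta}^{\vee}$, and then runs the long exact cohomology sequences at degree $-1$ to verify that the induced maps on $H^{-1}$ of the cones are isomorphisms compatible with the maps to $H^{-1}(\mathbb{L}^{\bullet}_{\mathfrak{H}_{\operatorname{HFT}}})$, which after dualizing gives the required $\psi$ on $H^{1}$.
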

\begin{proof} We have to show that given a diagram 
\begin{equation}\label{gluable}
\begin{tikzpicture}
back line/.style={densely dotted}, 
cross line/.style={preaction={draw=white, -, 
line width=6pt}}] 
\matrix (m) [matrix of math nodes, 
row sep=1em, column sep=3.em, 
text height=1.5ex, 
text depth=0.25ex]{ 
T&\mathcal{U}_{\alpha\beta}\\
T'&\mathfrak{N}_{\text{HFT}}\\};
\path[->]
(m-1-1) edge node [above] {$g_{\alpha\beta}$} (m-1-2)
(m-1-2)edge (m-2-2)
(m-2-1) edge (m-2-2)
(m-1-1) edge node [left] {$\iota$} (m-2-1);
\end{tikzpicture},
\end{equation}
there exists a map $\psi: \text{h}^{1}(f_{\alpha}^{*}\mathbb{G}^{\bullet\vee}_{\alpha})\xrightarrow{\cong}\text{h}^{1}(f_{\beta}^{*}\mathbb{G}^{\bullet\vee}_{\beta})$ such that given a class $ob(f_{\alpha}^{*}\phi_{\alpha},g_{\alpha\beta},T,T')\in\text{h}^{1}(f^{*}_{\alpha}(\mathbb{L}^{\bullet}_{\mathfrak{N}_{\text{HFT}}}\mid_{\mathcal{U}_{\alpha}})^{\vee})$ (Look at diagram \eqref{gluable}) and for every point $p\in \mathcal{U}_{\alpha\beta}$ we have$$\psi\mid_{p}ob(f_{\alpha}^{*}\phi_{\alpha},g_{\alpha\beta},T,T')=ob(f_{\beta}^{*}\phi_{\beta},g_{\alpha\beta},T,T').$$
Apply the result of Proposition \ref{existence} to $\mathcal{U}_{\alpha}$ and $\mathcal{U}_{\beta}$ and obtain unique isomorphisms as in \eqref{existence2} over $\mathcal{U}_{\alpha}$ and $\mathcal{U}_{\beta}$. Now use the fact that $\mathcal{U}_{\alpha\beta}$ is affine and pull back the obtained isomorphisms via $f_{\alpha}$ and $f_{\beta}$ to $\mathcal{U}_{\alpha\beta}$ and obtain a unique isomorphism$$\text{Hom}(\Omega_{\pi}\mid_{\mathcal{U}_{\alpha\beta}},\mathbb{E}^{\bullet}_{\alpha\beta})\cong\text{Hom}(\Omega_{\pi}\mid_{\mathcal{U}_{\alpha\beta}},\mathbb{L}^{\bullet}_{\alpha\beta}).$$Now by the uniqueness property, there exists an isomorphism in $\mathcal{D}^{b}(\mathcal{U}_{\alpha\beta})$ given by$$\kappa_{\alpha\beta}:f_{\alpha}^{*}\mathbb{E}^{\bullet}_{\alpha}\rightarrow f_{\beta}^{*}\mathbb{E}^{\bullet}_{\beta}.$$By assumption, $\mathcal{U}_{\alpha}$ and $\mathcal{U}_{\beta}$ are given as charts with lifting property (c.f. Theorem \ref{2step-trunc}), hence there exists lifts $\text{Hom}(\Omega_{\pi}\mid_{\mathcal{U}_{\alpha}},\mathbb{E}^{\bullet}_{\alpha}[1])$ and $\text{Hom}(\Omega_{\pi}\mid_{\mathcal{U}_{\beta}},\mathbb{E}^{\bullet}_{\beta}[1])$ given by $g_{\alpha}:\Omega_{\pi}\mid_{\mathcal{U}_{\alpha}}\rightarrow\mathbb{E}^{\bullet}_{\alpha}[1]$ and $g_{\beta}:\Omega_{\pi}\mid_{\mathcal{U}_{\beta}}\rightarrow\mathbb{E}^{\bullet}_{\beta}[1]$ over $\mathcal{U}_{\alpha}$ and $\mathcal{U}_{\beta}$ respectively. Now consider the pullbacks $f_{\alpha}^{*}\Omega_{\pi}\mid_{\mathcal{U}_{\alpha}}[-1]\rightarrow f_{\alpha}^{*}\mathbb{E}^{\bullet}_{\alpha}$ and $f_{\alpha}^{*}\Omega_{\pi}\mid_{\mathcal{U}_{\alpha}}[-1]\rightarrow f_{\alpha}^{*}\mathbb{E}^{\bullet}_{\alpha}$ and note that by Proposition \ref{existence}, $f_{\alpha}^{*}g_{\alpha}$ and $f_{\beta}^{*}g_{\beta}$ are homotopic to each other over $\mathcal{U}_{\alpha\beta}$ and satisfy the equation:$$f_{\alpha}^{*}g_{\alpha}-f_{\beta}^{*}g_{\beta}=d\circ h_{\alpha\beta}+h_{\alpha\beta}\circ d,$$where $h_{\alpha\beta}$ is given as a choice of homotopy. Now take the cone of $f_{\alpha}^{*}g_{\alpha}$ and $f_{\beta}^{*}g_{\beta}$ and obtain the following commutative diagram:
\begin{equation}\label{homotopy-1}
\begin{tikzpicture}
back line/.style={densely dotted}, 
cross line/.style={preaction={draw=white, -, 
line width=6pt}}] 
\matrix (m) [matrix of math nodes, 
row sep=1em, column sep=3.25em, 
text height=1.5ex, 
text depth=0.25ex]{ 
\text{Cone}(f_{\alpha}^{*}g_{\alpha})[-1]&f_{\alpha}^{*}\Omega_{\pi}\mid_{\mathcal{U}_{\alpha}}&f_{\alpha}^{*}\mathbb{E}^{\bullet}_{\alpha}[1]&\text{Cone}(f_{\alpha}^{*}g_{\alpha})\\
\text{Cone}(f_{\beta}^{*}g_{\beta})[-1]&f_{\beta}^{*}\Omega_{\pi}\mid_{\mathcal{U}_{\beta}}&f_{\beta}^{*}\mathbb{E}^{\bullet}_{\beta}[1]&\text{Cone}(f_{\beta}^{*}g_{\beta}),\\};
\path[->]
(m-1-1) edge node [right] {$J_{\alpha\beta}[-1]$} (m-2-1)
(m-1-2) edge node [above] {$f_{\alpha}^{*}g_{\alpha}$} (m-1-3)
(m-1-1) edge (m-1-2)
(m-1-3) edge  (m-1-4)
(m-1-2) edge node [right] {$\text{id}$} (m-2-2)
(m-1-4) edge node [right] {$J_{\alpha\beta}$} (m-2-4)
(m-2-3) edge (m-2-4)
(m-2-1) edge (m-2-2)
(m-2-2) edge node [above] {$f_{\beta}^{*}g_{\beta}$} (m-2-3)
(m-1-3) edge node [right] {$\text{id}$} (m-2-3);
\end{tikzpicture}
\end{equation}
where $J_{\alpha\beta}:=\left(\begin{array}{cc}  \text{id}&h_{\alpha\beta}\\
0&\text{id}
 \end{array}\right)$. Since the first and the second rows in diagram \eqref{homotopy-1} are given by exact triangles, one computes the long exact sequence of cohomologies and obtains the following commutative diagram:
\begin{equation}\label{homotopy-2}
\begin{tikzpicture}
back line/.style={densely dotted}, 
cross line/.style={preaction={draw=white, -, 
line width=6pt}}] 
\matrix (m) [matrix of math nodes, 
row sep=1em, column sep=2.em, 
text height=1.5ex, 
text depth=0.25ex]{ 
\cdots&\text{h}^{i}(f_{\alpha}^{*}\Omega_{\pi}\mid_{\mathcal{U}_{\alpha}}[-1])&\text{h}^{i}(f_{\alpha}^{*}\mathbb{E}^{\bullet}_{\alpha})&\text{h}^{i}(\text{Cone}(f_{\alpha}^{*}g_{\alpha}))[-1]&\cdots\\
\cdots&\text{h}^{i}(f_{\beta}^{*}\Omega_{\pi}\mid_{\mathcal{U}_{\beta}}[-1])&\text{h}^{i}(f_{\beta}^{*}\mathbb{E}^{\bullet}_{\beta})&\text{h}^{i}(\text{Cone}(f_{\beta}^{*}g_{\beta}))[-1]&\cdots.\\};
\path[->]
(m-1-1) edge(m-1-2)
(m-2-1) edge (m-2-2)
(m-1-2) edge node [right] {$\text{id}$} (m-2-2)
(m-1-3) edge  (m-1-4)
(m-1-2) edge (m-1-3)
(m-1-4) edge  (m-1-5)
(m-1-3) edge node [right] {$\text{id}$} (m-2-3)
(m-2-4) edge (m-2-5)
(m-2-2) edge (m-2-3)
(m-2-3) edge  (m-2-4)
(m-1-4) edge  node [right] {$\text{h}^{i}(J_{\alpha\beta})[-1]$} (m-2-4);
\end{tikzpicture}
\end{equation}
Now use \cite[Proposition 4.10]{a71} and conclude that the left, middle and right squares in \eqref{homotopy-2} are commutative square diagrams for all $i$. By computing the cohomologies in the level of $i=-1$ one obtains:

\begin{equation}\label{homotopy-21}
\begin{tikzpicture}
back line/.style={densely dotted}, 
cross line/.style={preaction={draw=white, -, 
line width=6pt}}] 
\matrix (m) [matrix of math nodes, 
row sep=1em, column sep=3.em, 
text height=1.5ex, 
text depth=0.25ex]{ 
0&\text{h}^{-1}(f_{\alpha}^{*}\mathbb{E}^{\bullet}_{\alpha})&\text{h}^{-1}(\text{Cone}(f_{\alpha}^{*}g_{\alpha})[-1])&0\\
0&\text{h}^{-1}(f_{\beta}^{*}\mathbb{E}^{\bullet}_{\beta})&\text{h}^{-1}(\text{Cone}(f_{\beta}^{*}g_{\beta}))&0,\\};
\path[->]
(m-1-1) edge(m-1-2)
(m-1-3) edge (m-1-4)
(m-2-1) edge (m-2-2)
(m-1-3) edge node [right] {$\text{h}^{-1}(J_{\alpha\beta}[-1])$} (m-2-3)
(m-1-2) edge  node [above]{$\cong$} (m-1-3)
(m-1-2) edge  node [below]{$\rho_{1}$} (m-1-3)
(m-1-2) edge node [right] {$\text{id}$} (m-2-2)
(m-2-2) edge node [above]{$\cong$} (m-2-3)
(m-2-2) edge node [below]{$\rho_{2}$} (m-2-3)
(m-2-3) edge  (m-2-4);
\end{tikzpicture}
\end{equation}
where the vanishings on the ends are due to the fact that $\text{h}^{i}(f_{\alpha}^{*}\Omega_{\pi}\mid_{\mathcal{U}_{\alpha}}[-1])\cong 0$ and $\text{h}^{i}(f_{\beta}^{*}\Omega_{\pi}\mid_{\mathcal{U}_{beta}}[-1])\cong 0$ for $i=-1,0$. Hence, we conclude that by commutativity of the middle square, $\text{h}^{-1}(J_{\alpha\beta}[-1])$ is an isomorphism of cohomologies and moreover, given any $\nu\in \text{h}^{-1}(\text{Cone}(f_{\alpha}^{*}g_{\alpha})[-1])$:
\begin{equation}\label{kharboze1}
\text{id}\circ \rho_{1}^{-1}(\nu)=\rho^{-1}_{2}\circ \text{h}^{-1}(J_{\alpha\beta}[-1])(\nu).
\end{equation}
Note that given a choice of homotopy $h_{\alpha\beta}^{\vee}$ satisfying $$f_{\alpha}^{*}g^{\vee}_{\alpha}-f_{\beta}^{*}g^{\vee}_{\beta}=d\circ h^{\vee}_{\alpha\beta}+h^{\vee}_{\alpha\beta}\circ d,$$ and via restriction of the exact triangle in \eqref{lift-g'} to $\mathcal{U}_{\alpha\beta}$, similar to the above procedure, we obtain a commutative diagram:
\begin{equation}\label{homotopy-3}
\begin{tikzpicture}
back line/.style={densely dotted}, 
cross line/.style={preaction={draw=white, -, 
line width=6pt}}] 
\matrix (m) [matrix of math nodes, 
row sep=2em
, column sep=2.25em, 
text height=1.5ex, 
text depth=0.25ex]{ 
\text{Cone}(f_{\alpha}^{*}g^{\vee}_{\alpha})[-1]&\text{Cone}(f_{\alpha}^{*}g_{\alpha})[-1]&f_{\alpha}^{*}\text{T}_{\pi}\mid_{\mathcal{U}_{\alpha}}&\text{Cone}(f_{\alpha}^{*}g^{\vee}_{\alpha})\\
\text{Cone}(f_{\beta}^{*}g^{\vee}_{\beta})[-1]&\text{Cone}(f_{\beta}^{*}g_{\beta})[-1]&f_{\beta}^{*}\text{T}_{\pi}\mid_{\mathcal{U}_{\beta}}&\text{Cone}(f_{\beta}^{*}g^{\vee}_{\beta}),\\};
\path[->]
(m-1-1) edge node [right] {$J^{\vee}_{\alpha\beta}[-1]$} (m-2-1)
(m-1-2) edge node [above] {$f_{\alpha}^{*}g^{\vee}_{\alpha}$} (m-1-3)
(m-1-1) edge (m-1-2)
(m-1-3) edge  (m-1-4)
(m-1-2) edge node [right] {$J_{\alpha\beta}[-1]$} (m-2-2)
(m-1-4) edge node [right] {$J^{\vee}_{\alpha\beta}$} (m-2-4)
(m-2-3) edge (m-2-4)
(m-2-1) edge (m-2-2)
(m-2-2) edge node [above] {$f_{\beta}^{*}g^{\vee}_{\beta}$} (m-2-3)
(m-1-3) edge node [right] {$\text{id}$} (m-2-3);
\end{tikzpicture}
\end{equation}
and so similarly, we obtain a commutative diagram induced by the long exact sequences of cohomologies:
\begin{equation}\label{homotopy-22}
\begin{tikzpicture}
back line/.style={densely dotted}, 
cross line/.style={preaction={draw=white, -, 
line width=6pt}}] 
\matrix (m) [matrix of math nodes, 
row sep=2em
, column sep=.7em, 
text height=1.5ex, 
text depth=0.25ex]{ 
&\text{h}^{i}(\text{Cone}(f_{\alpha}^{*}g^{\vee}_{\alpha})[-1])&\text{h}^{i}(\text{Cone}(f_{\alpha}^{*}g_{\alpha})[-1])&\text{h}^{i}(f_{\alpha}^{*}\text{T}_{\pi}\mid_{\mathcal{U}_{\alpha}})&\text{h}^{i}(\text{Cone}(f_{\alpha}^{*}g^{\vee}_{\alpha})&\\
&\text{h}^{i}(\text{Cone}(f_{\beta}^{*}g^{\vee}_{\beta})[-1])&\text{h}^{i}(\text{Cone}(f_{\beta}^{*}g_{\beta})[-1])&\text{h}^{i}(f_{\beta}^{*}\text{T}_{\pi}\mid_{\mathcal{U}_{\beta}})&\text{h}^{i}(\text{Cone}(f_{\beta}^{*}g^{\vee}_{\beta})&,\\};
\path[->]
(m-1-2) edge node [right] {$\text{h}^{i}(J^{\vee}_{\alpha\beta}[-1])$} (m-2-2)
(m-1-3) edge (m-1-4)
(m-1-2) edge (m-1-3)
(m-1-4) edge  (m-1-5)
(m-1-3) edge node [right] {$\text{h}^{i}(J_{\alpha\beta}[-1])$} (m-2-3)
(m-1-5) edge node [right] {$\text{h}^{i}(J^{\vee}_{\alpha\beta})$} (m-2-5)
(m-2-4) edge (m-2-5)
(m-2-2) edge (m-2-3)
(m-2-3) edge  (m-2-4)
(m-1-4) edge node [right] {$\text{id}$} (m-2-4);
\end{tikzpicture}
\end{equation}
Now use \cite[Proposition 4.10]{a71} and conclude that the left, middle and right squares in \eqref{homotopy-2} are commutative square diagrams for all $i$ and in particular for $i=-1$:
\begin{equation}\label{homotopy-23}
\begin{tikzpicture}
back line/.style={densely dotted}, 
cross line/.style={preaction={draw=white, -, 
line width=6pt}}] 
\matrix (m) [matrix of math nodes, 
row sep=2.em
, column sep=1.em, 
text height=1.5ex, 
text depth=0.25ex]{ 
0&\text{h}^{-1}(\text{Cone}(f_{\alpha}^{*}g^{\vee}_{\alpha})[-1])&\text{h}^{-1}(\text{Cone}(f_{\alpha}^{*}g_{\alpha})[-1])&0&\text{h}^{-1}(\text{Cone}(f_{\alpha}^{*}g^{\vee}_{\alpha})&\\
0&\text{h}^{-1}(\text{Cone}(f_{\beta}^{*}g^{\vee}_{\beta})[-1])&\text{h}^{i}(\text{Cone}(f_{\beta}^{*}g_{\beta})[-1])&0&\text{h}^{-1}(\text{Cone}(f_{\beta}^{*}g^{\vee}_{\beta})&.\\};
\path[->]
(m-1-1) edge(m-1-2)
(m-2-1) edge (m-2-2)
(m-1-2) edge node [right] {$\text{h}^{-1}(J^{\vee}_{\alpha\beta}[-1])$} (m-2-2)
(m-1-3) edge (m-1-4)
(m-1-2) edge node [above] {$q_{1}$} (m-1-3)
(m-1-4) edge  (m-1-5)
(m-1-3) edge node [right] {$\text{h}^{-1}(J_{\alpha\beta}[-1])$} (m-2-3)
(m-1-5) edge node [right] {$\text{h}^{-1}(J^{\vee}_{\alpha\beta})$} (m-2-5)
(m-2-4) edge (m-2-5)
(m-2-2) edge node [above] {$q_{2}$} (m-2-3)
(m-2-3) edge  (m-2-4)
(m-1-4) edge node [right] {$\text{id}$} (m-2-4);
\end{tikzpicture}
\end{equation}
Hence by commutativity of the left square, and the fact that $\text{h}^{-1}(J_{\alpha\beta}[-1])$ is an isomorphism, then $\text{h}^{-1}(J^{\vee}_{\alpha\beta}[-1])$ is an isomorphism and moreover, for any $\mu\in \text{h}^{-1}(\text{Cone}(f_{\alpha}^{*}g^{\vee}_{\alpha})[-1])$ we have:
\begin{equation}\label{kharboze2}
\text{h}^{-1}(J_{\alpha\beta}[-1])\circ q_{1}(\mu)=q_{2}\circ \text{h}^{-1}(J^{\vee}_{\alpha\beta}[-1])(\mu).
\end{equation}
Now take an element $\mu\in \text{h}^{-1}(\text{Cone}(f_{\alpha}^{*}g^{\vee}_{\alpha})[-1])$ and note that by \eqref{kharboze1} and \eqref{kharboze2} we have:
\begin{equation}\label{hendoone1}
 \text{id}\circ \rho_{1}^{-1}\circ \text{h}^{-1}(J_{\alpha\beta}[-1])\circ q_{1}(\mu)=\rho^{-1}_{2}\circ \text{h}^{-1}(J_{\alpha\beta}[-1])\circ q_{2}\circ \text{h}^{-1}(J^{\vee}_{\alpha\beta}[-1])(\mu).
\end{equation}
Moreover, $\mathbb{L}^{\bullet}_{\mathfrak{N}_{\text{HFT}}}$ and $\mathbb{L}^{\bullet}_{\mathfrak{N}_{\text{FT}}}$ satisfy the condition that  $\text{h}^{-1}(\mathbb{L}^{\bullet}_{\mathfrak{N}_{\text{HFT}}})\cong \text{h}^{-1}(\mathbb{L}^{\bullet}_{\mathfrak{N}_{\text{FT}}})$. Hence, one easily observes that there exist maps $\lambda_{1}: \text{h}^{-1}(f^{*}_{\alpha}\mathbb{E}^{\bullet}_{\alpha})\rightarrow \text{h}^{-1}(f^{*}_{\alpha}(\mathbb{L}^{\bullet}_{\mathfrak{N}_{\text{HFT}}}\mid_{\mathcal{U}_{\alpha}}))$ and $\lambda_{2}: \text{h}^{-1}(f^{*}_{\beta}\mathbb{E}^{\bullet}_{\beta})\rightarrow \text{h}^{-1}(f^{*}_{\beta}(\mathbb{L}^{\bullet}_{\mathfrak{N}_{\text{HFT}}}\mid_{\mathcal{U}_{\beta}}))$ such that the following diagram commutes:
\begin{equation}\label{E-L-commute}
\begin{tikzpicture}
back line/.style={densely dotted}, 
cross line/.style={preaction={draw=white, -, 
line width=6pt}}] 
\matrix (m) [matrix of math nodes, 
row sep=1em
, column sep=3.em, 
text height=1.5ex, 
text depth=0.25ex]{ 
\text{h}^{-1}(f^{*}_{\alpha}\mathbb{E}^{\bullet}_{\alpha})&\text{h}^{-1}(f^{*}_{\alpha}(\mathbb{L}^{\bullet}_{\mathfrak{N}_{\text{HFT}}}\mid_{\mathcal{U}_{\alpha}}))\\
\text{h}^{-1}(f^{*}_{\beta}\mathbb{E}^{\bullet}_{\beta})&\text{h}^{-1}(f^{*}_{\beta}(\mathbb{L}^{\bullet}_{\mathfrak{N}_{\text{HFT}}}\mid_{\mathcal{U}_{\beta}})).\\};
\path[->]
(m-1-1) edge node [above] {$\lambda_{1}$} (m-1-2)
(m-1-2)edge node [left] {$\text{id}$} (m-2-2)
(m-2-1) edge node [above] {$\lambda_{2}$} (m-2-2)
(m-1-1) edge node [left] {$\text{id}$} (m-2-1);
\end{tikzpicture}
\end{equation}
 Now by \eqref{E-L-commute} and \eqref{hendoone1} it is seen that given $\mu\in \text{h}^{-1}(\text{Cone}(f_{\alpha}^{*}g^{\vee}_{\alpha})[-1])$,  we obtain an identity
 \begin{equation}
 \text{id}\circ \lambda_{1}\circ \text{id}\circ \rho_{1}^{-1}\circ \text{h}^{-1}(J_{\alpha\beta}[-1])\circ q_{1}(\mu)=\lambda_{2}\circ\text{id}\circ \rho^{-1}_{2}\circ \text{h}^{-1}(J_{\alpha\beta}[-1])\circ q_{2}\circ \text{h}^{-1}(J^{\vee}_{\alpha\beta}[-1])(\mu).
 \end{equation}
Let $\psi^{\vee}:=\text{id}\circ \rho^{-1}_{2}\circ \text{h}^{-1}(J_{\alpha\beta}[-1])\circ q_{2}\circ \text{h}^{-1}(J^{\vee}_{\alpha\beta}[-1])$. So far we have seen that in the level of $\text{h}^{-1}$ cohomology there exists a map $\psi^{\vee}:\text{h}^{-1}(\text{Cone}(f_{\alpha}^{*}g^{\vee}_{\alpha})[-1])\xrightarrow{\cong} \text{h}^{-1}(\text{Cone}(f_{\beta}^{*}g^{\vee}_{\alpha})[-1])$ such that $\lambda_{2}\circ \text{Im}(\psi^{\vee})=\text{Im}(\lambda_{1})$. 

Recall that by our notation, $\mathbb{G}^{\bullet}_{\alpha}:=\text{Cone}(g^{\vee}_{\alpha})[-1]$ and $\mathbb{G}^{\bullet}\mid_{\mathcal{U}_{\beta}}:=\text{Cone}(g^{\vee}_{\beta})[-1]$. Now dualize the construction and conclude that there exists a map $\psi: \text{h}^{1}(f_{\alpha}^{*}\mathbb{G}^{\bullet\vee}_{\alpha})\xrightarrow{\cong}\text{h}^{1}(f_{\beta}^{*}\mathbb{G}^{\bullet\vee}_{\beta})$, such that given a class $ob(f_{\alpha}^{*}\phi_{\alpha},g_{\alpha\beta},T,T')\in\text{h}^{1}(f^{*}_{\alpha}(\mathbb{L}^{\bullet}_{\mathfrak{N}_{\text{HFT}}}\mid_{\mathcal{U}_{\alpha}})^{\vee})$ (c.f. Diagram \eqref{gluable}) and for every point $p\in \mathcal{U}_{\alpha\beta}$ we have$$\psi\mid_{p}ob(f_{\alpha}^{*}\phi_{\alpha},g_{\alpha\beta},T,T')=ob(f_{\beta}^{*}\phi_{\beta},g_{\alpha\beta},T,T').$$This finishes the proof of Proposition \ref{semi-perf1}. 
\end{proof}
\begin{defn}\label{semi-perf}
\cite[Definition 3.1]{a70}. Let $X$ be a DM stack of finite type over an Artin stack $\mathcal{M}$. A semi perfect obstruction theory over $X\rightarrow \mathcal{M}$ consists 
of an \'{e}tale covering $\mathcal{U}=\coprod_{\alpha\in \Lambda}\mathcal{U}_{\alpha}$ of $X$ by schemes, and a truncated perfect relative obstruction theory$$\phi_{\alpha}:\mathbb{G}^{\bullet}_{\alpha}\rightarrow \mathbb{L}^{\bullet}_{\mathcal{U}_{\alpha}/\mathcal{M}}$$for each $\alpha\in \Lambda$ such that
\begin{enumerate}
\item for each $\alpha,\beta$ in $\Lambda$ there is an isomorphism$$\psi_{\alpha\beta}:\text{h}^{1}(\mathbb{G}^{\bullet\vee}_{\alpha}\mid_{\mathcal{U}_{\alpha\beta}})\xrightarrow{\cong}\text{h}^{1}( \mathbb{G}^{\bullet\vee}_{\beta}\mid_{\mathcal{U}_{\alpha\beta}})$$so that the collection $(\text{h}^{1}(\mathbb{G}^{\bullet\vee}_{\alpha}),\psi_{\alpha\beta})$ forms a descent datum of sheaves.
\item For any pair  $\alpha,\beta\in\Lambda$ the obstruction theories $\phi_{\alpha}\mid_{\mathcal{U}_{\alpha\beta}}$ and $\phi_{\beta}\mid_{\mathcal{U}_{\alpha\beta}}$ are $\nu$-equivalent.
\end{enumerate}
\end{defn}
The condition (1) above, that the $\nu$-equivalences we have constructed induce a descent datum of sheaves on $\text{h}^1$, requires that we carefully choose homotopies $h_{\alpha\beta}$ and $h^{\vee}_{\alpha\beta}$ on $\mathcal{U}_{\alpha\beta}$ so that the induced composite quasi-isomorphisms
$\psi_{\gamma\alpha}\circ\psi_{\beta\gamma}\circ\psi_{\alpha\beta}$ induce the identity maps on $\text{h}^1$.  

For now let us make the following assumption:
\begin{assumption}\label{descent assumption}
The homotopies $h_{\alpha\beta}$ and $h^{\vee}_{\alpha\beta}$ can be chosen so that the collection of data 
$(\text{h}^{1}(\mathbb{G}^{\bullet\vee}_{\alpha}),\psi_{\alpha\beta})$ forms a descent datum of sheaves.
\end{assumption}

\begin{theorem}\label{2step-trunc2}
Assuming the technical condition \ref{descent assumption}, the local deformation-obstruction theory in Theorem \ref{2step-trunc} satisfies the conditions of being a semi perfect deformation-obstruction theory in the sense of \cite[Definition 3.1]{a70} and hence, it defines a globally well-behaved virtual fundamental class over $\mathfrak{N}_{\text{HFT}}$.
\end{theorem}

\begin{proof}
 We need to show that both conditions in Definition \ref{semi-perf} are satisfied. For part (2) of Definition \ref{semi-perf}, apply Proposition \ref{semi-perf1} and conclude that $\phi_{\alpha}\mid_{\mathcal{U}_{\alpha\beta}}=f_{\alpha}^{*}\phi_{\alpha}$ and $\phi_{\beta}\mid_{\mathcal{U}_{\alpha\beta}}=f_{\beta}^{*}\phi_{\beta}$ are $\nu$-equivalent.  To prove part (1), first apply Proposition \ref{semi-perf1} and obtain the map$$\psi_{\alpha\beta}:\text{h}^{1}(\mathbb{G}^{\bullet\vee}_{\alpha}\mid_{\mathcal{U}_{\alpha\beta}})\xrightarrow{\cong}\text{h}^{1}( \mathbb{G}^{\bullet\vee}_{\beta}\mid_{\mathcal{U}_{\alpha\beta}}).$$
Now by Assumption \ref{descent assumption},  $(\text{h}^{1}(\mathbb{G}^{\bullet\vee}_{\alpha}),\psi_{\alpha\beta})$ forms a descent datum. 
\end{proof}

\begin{remark}
\emph{Making Assumption \ref{descent assumption} should, morally speaking, be unnecessary over more enhanced models of the moduli stacks.\;  The local models $\mathbb{G}_\alpha^\bullet$ can always be glued, up to higher homotopies, and thus should always give an $\infty$-stack in which the virtual normal cone lives. This can be seen in \cite{a92} and \cite{a93}. Another relevant discussion is by Sch\"{u}rg et al. \cite{a94} who study the advantage of using higher stacks in order to construct the virtual fundamental classes in more generality.  We expect that in the future a good intersection theory for $\infty$-stacks would allow us to construct a virtual cycle using this $\infty$-stack.  Such a construction is beyond the scope of the present article, however.}
\end{remark}

\begin{remark}\label{important}
\emph{Assumption \ref{descent assumption} holds true in the setting of Part II when the base variety $X$ is chosen to be a toric variety (see Lemma \ref{auto}). This is due to the fact that the torus equivariant highly frozen triples are identifiable with $r$ copies of torus equivariant PT pairs (c.f. Proposition \ref{finalword}). Moreover, in Part II a direct calculation of equivariant vertex for highly frozen triples is carried out and the result is shown to match precisely with the equivariant vertex associated to  $r$ copies of (twisted) PT deformation-obstruction theory.}
\end{remark}

\section{Part II (Calculations)}\label{torus-high}\label{sec11}
Let $X$ be given as a toric variety with an action of $\textbf{T}=\mathbb{C}^{*3}$. Consider the ample line bundle over $X$ given by $\mathcal{O}_{X}(1)$. By the usual arguments, and similar to the theory of stable pairs \cite{a17}, the action of $\textbf{T}$ on $X$ induces an action on $\mathfrak{N}_{\text{HFT}}$;
\begin{prop}\label{p1p2-action}
(\textit{Geometric torus action}) Let $X$ be given as a nonsingular toric threefold. Let $\textbf{T}$ be the $(\mathbb{C}^{*})^{3}$ action on $X$. Having fixed an equivariant structure on $\mathcal{O}_{X}(1)$, there exists an induced action of $\textbf{T}$ on moduli stack of stable highly frozen triples $\overline{m}^{\textbf{T}}:\textbf{T}\times\mathfrak{N}_{\operatorname{HFT}}\rightarrow\mathfrak{N}_{\operatorname{HFT}}$ given by pre-composing the pullback (via the action of $\textbf{T}$ on $X$) of triples  with the inverse of the isomorphism $\psi$.
\end{prop}
\begin{proof}
The proof is essentially followed by applying the strategy of \cite[Proposition 4.1]{a34} to the highly frozen triples. In other words, considering the highly frozen triples $(E,F,\phi,\psi)$, the action of $\textbf{T}$ induces an action (given by the pullback) on the moduli spaces of sheaves associated to $E$ and $F$ and this action respects the morphism $E\rightarrow F$ after pre-composing the pull back $t^{*}E\rightarrow t^{*}F$ with the isomorphism $\psi^{-1}: E\rightarrow t^{*}E$ for all $t\in \textbf{T}$. 
 \end{proof}
 \begin{prop}\label{equiv1}
Let $S$ be a parametrizing scheme of finite type over $\mathbb{C}$. Let $(\mathcal{E},\mathcal{F},\phi,\psi)_{S}$ denote a family of stable highly frozen triples over $S$. Suppose that for all $t=(\lambda_{1},\lambda_{2},\lambda_{3})\in \textbf{T}$ we have $t^{*}((\mathcal{E},\mathcal{F},\phi,\psi)_{S})\cong (\mathcal{E},\mathcal{F},\phi,\psi)_{S}$, then $(\mathcal{E},\mathcal{F},\phi,\psi)_{S}$ admits a $\textbf{T}$-equivariant structure. 
\end{prop}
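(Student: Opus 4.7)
\textit{Proof proposal.} The strategy rests on the rigidity of $\tau'$-stable highly frozen triples established in Lemma \ref{pf-auto-1}, which shows that the only automorphism of such a triple is the identity. This rigidity turns the task of constructing an equivariant structure into a largely automatic consequence of pointwise existence of isomorphisms, because it forces every candidate isomorphism to be canonical.

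First, I would denote by $\sigma: \textbf{T}\times X\to X$ the action and by $p_{X}: \textbf{T}\times X\to X$ the projection, and consider the two families of highly frozen triples on $\textbf{T}\times X\times S$ obtained as $(\sigma\times\operatorname{id}_{S})^{*}(\mathcal{E},\mathcal{F},\phi,\psi)_{S}$ and $(p_{X}\times\operatorname{id}_{S})^{*}(\mathcal{E},\mathcal{F},\phi,\psi)_{S}$. A $\textbf{T}$-equivariant structure is, by definition, an isomorphism
\begin{equation*}
\Phi:(\sigma\times\operatorname{id}_{S})^{*}(\mathcal{E},\mathcal{F},\phi,\psi)_{S}\xrightarrow{\cong}(p_{X}\times\operatorname{id}_{S})^{*}(\mathcal{E},\mathcal{F},\phi,\psi)_{S}
\end{equation*}
satisfying the standard cocycle identity on $\textbf{T}\times\textbf{T}\times X\times S$. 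The assumption of the proposition says precisely that such an isomorphism exists fiberwise over every closed point of $\textbf{T}$.

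Next, I would apply the representability results of Section \ref{moduli-stacks} to conclude that the relative $\underline{\mathrm{Isom}}$ sheaf between these two families, viewed as a scheme $I$ over $\textbf{T}\times S$, is representable. By Lemma \ref{pf-auto-1} every nonempty fiber of $I\to \textbf{T}\times S$ is a reduced point, and by hypothesis every fiber over a closed point of $\textbf{T}\times S$ is nonempty. Standard arguments, together with properness of $I\to \textbf{T}\times S$ coming from the quasi-projective scheme structure of $\mathfrak{M}^{(r,n)}_{s,\operatorname{HFT}}(\tau')$ (Remark \ref{wandel-scheme}), then upgrade $I\to \textbf{T}\times S$ to an isomorphism, which produces the desired global $\Phi$.

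Finally, I would verify the cocycle identity on $\textbf{T}\times\textbf{T}\times X\times S$. Both sides of the cocycle relation give isomorphisms between the same pair of pulled-back families, and Lemma \ref{pf-auto-1} applied in families guarantees that any two such isomorphisms coincide; hence the cocycle identity is automatic. The main potential obstacle is ensuring algebraic dependence of the fiberwise isomorphisms on $t\in \textbf{T}$, but this is absorbed into representability of $\underline{\mathrm{Isom}}$ and the subsequent identification $I\xrightarrow{\cong}\textbf{T}\times S$, so no further work is needed beyond invoking the rigidity lemma.
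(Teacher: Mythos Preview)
Your proposal is correct and rests on the same core idea as the paper's proof: the rigidity of $\tau'$-stable highly frozen triples (Lemma \ref{pf-auto-1}) promotes pointwise isomorphisms to a canonical global one. The packaging differs, however. The paper, following \cite{a74} (Lemma 3.3), works with the relative sheaf
\[
\operatorname{Ext}^{0}_{\operatorname{id}_{\textbf{T}}\times q}\big((\pi_{2}\times\operatorname{id}_{S})^{*}(\mathcal{E},\mathcal{F},\phi,\psi)_{S},(\sigma\times\operatorname{id}_{S})^{*}(\mathcal{E},\mathcal{F},\phi,\psi)_{S}\big)
\]
on $\textbf{T}\times S$, shows via simpleness that it is canonically $\mathcal{O}_{\textbf{T}\times S}$, and takes the preimage of $1$ as the global isomorphism; you instead use the $\underline{\mathrm{Isom}}$ scheme and argue it is an isomorphism onto $\textbf{T}\times S$. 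These are equivalent formulations: since the moduli is a fine moduli scheme, $\underline{\mathrm{Isom}}$ is the equalizer of the two classifying maps, hence a closed subscheme of $\textbf{T}\times S$ by separatedness of $\mathfrak{M}^{(r,n)}_{s,\operatorname{HFT}}(\tau')$ (this is the content of your ``properness'' claim---quasi-projectivity gives separatedness, which is what you actually need). Your version has the virtue of making the cocycle verification explicit, which the paper omits. One small sharpening: the hypothesis gives $\underline{\mathrm{Isom}}\supset\{t\}\times S$ for every closed $t$, not merely nonemptiness over closed points of $\textbf{T}\times S$; combined with $\textbf{T}$ being reduced, this is what forces $\underline{\mathrm{Isom}}=\textbf{T}\times S$ even when $S$ is nonreduced.
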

 \begin{proof} We give an adaptation of the proof given in \cite[Lemma 3.3]{a74} to our case. By assumption for any $t\in \textbf{T}$ one has $t^{*}((\mathcal{E},\mathcal{F},\phi,\psi)_{S}\cong (\mathcal{E},\mathcal{F},\phi,\psi)_{S}.$ Let $\sigma:\textbf{T}\times X\rightarrow X$ denote the torus action on $X$ and $\pi_{2}:\textbf{T}\times X\rightarrow X$ be the projection onto the second factor. Let $q:X\times S\rightarrow S$ be the projection onto $S$. One needs to show that there exists a map:
\begin{equation}\label{line-bundle}
\rho:\text{Ext}^{0}_{id_{\textbf{T}}\times q}((\pi_{2}\times\text{id}_{S})^{*}(\mathcal{E},\mathcal{F},\phi,\psi)_{S},(\sigma\times \text{id}_{S})^{*}(\mathcal{E},\mathcal{F},\phi,\psi)_{S})\rightarrow \mathcal{O}_{\textbf{T}\times S},
\end{equation}
which is an isomorphism of line bundles over $\textbf{T}\times S$. Here 
\begin{align}
&
\text{Ext}^{0}_{id_{\textbf{T}}\times q}((\pi_{2}\times\text{id}_{S})^{*}(\mathcal{E},\mathcal{F},\phi,\psi)_{S},(\sigma\times \text{id}_{S})^{*}(\mathcal{E},\mathcal{F},\phi,\psi)_{S})\notag\\
&
:=R^{0}(q\times \text{id}_{\textbf{T}})_{*}(\mathscr{H}om((\pi_{2}\times\text{id}_{S})^{*}(\mathcal{E},\mathcal{F},\phi,\psi)_{S},(\sigma\times \text{id}_{S})^{*}(\mathcal{E},\mathcal{F},\phi,\psi)_{S})).
\end{align}By definition of $\mathfrak{N}_{\operatorname{HFT}}$, choosing a family of stable highly frozen triples over $S$ is equivalent to choosing a unique map $S\rightarrow \mathfrak{N}_{\operatorname{HFT}}$. Since $(\sigma\times \text{id}_{S})^{*}(\mathcal{E},\mathcal{F},\phi,\psi)_{S}$ and $(\pi_{2}\times \text{id}_{S})^{*}(\mathcal{E},\mathcal{F},\phi,\psi)_{S}$ are two families over $\mathfrak{N}_{\operatorname{HFT}}$, they both define maps $f:\textbf{T}\times S \rightarrow \mathfrak{N}_{\operatorname{HFT}}$ and $g:\textbf{T}\times S\rightarrow \mathfrak{N}_{\operatorname{HFT}}$ respectively. By the uniqueness property, both maps are uniquely isomorphic to each other. On the other hand by Lemma \ref{pf-auto-1} the complexes representing $\tau'$-stable highly frozen triples are simple objects hence:
\begin{align}
&
\text{Ext}^{0}_{id_{\textbf{T}}\times q}((\pi_{2}\times\text{id}_{S})^{*}(\mathcal{E},\mathcal{F},\phi,\psi)_{S},(\sigma\times \text{id}_{S})^{*}(\mathcal{E},\mathcal{F},\phi,\psi)_{S})\notag\\
&
\cong \text{Ext}^{0}_{id_{\textbf{T}}\times q}((\mathcal{E},\mathcal{F},\phi,\psi)_{\textbf{T}\times S},(\mathcal{E},\mathcal{F},\phi,\psi)_{\textbf{T}\times S})\cong \mathcal{O}_{\textbf{T}\times S}.
\end{align}
Now the inverse image of $1\in \mathcal{O}_{\textbf{T}\times S}$ via the map $\rho$ in \eqref{line-bundle} gives a section of $$\text{Ext}^{0}_{id_{\textbf{T}}\times q}((\mathcal{E},\mathcal{F},\phi,\psi)_{\textbf{T}\times S},(\mathcal{E},\mathcal{F},\phi,\psi)_{\textbf{T}\times S})$$ which induces a section of $$\text{Ext}^{0}_{id_{\textbf{T}}\times q}((\pi_{2}\times\text{id}_{S})^{*}(\mathcal{E},\mathcal{F},\phi,\psi)_{S},(\sigma\times \text{id}_{S})^{*}(\mathcal{E},\mathcal{F},\phi,\psi)_{S})$$ which induces a morphism $(\pi_{2}\times\text{id}_{S})^{*}(\mathcal{E},\mathcal{F},\phi,\psi)_{S}\rightarrow (\sigma\times \text{id}_{S})^{*}(\mathcal{E},\mathcal{F},\phi,\psi)_{S}$. Moreover, it can be checked that this morphism is an isomorphism for every point in the moduli scheme of stable highly frozen triples. Therefore, it is an isomorphism everywhere and this finishes the proof.\end{proof}\label{sec13}
\begin{defn}\label{T0-action}
(\textit{Non-geometric torus action}) Define an action $\sigma_{0}:\text{T}_{0}\times\mathfrak{N}_{\operatorname{HFT}}\rightarrow\mathfrak{N}_{\operatorname{HFT}}$ where $\text{T}_{0}=(\mathbb{C}^{*})^{r}$, and $\sigma_{0}$ acts on $\mathfrak{N}_{\operatorname{HFT}}$ by rescaling in components of $\mathcal{O}_{X}(-n)^{\oplus r}$. 
\end{defn}
\begin{prop}\label{equiv2}
Let $S$ be a parametrizing scheme of finite type over $\mathbb{C}$. Let $(\mathcal{E},\mathcal{F},\phi,\psi)_{S}$ denote a family of stable highly frozen triples over $S$. Suppose that for all $t_{0}=(z_{1},\cdots, z_{r})\in \text{T}_{0}$, $\sigma_{0}(t_{0},(\mathcal{E},\mathcal{F},\phi,\psi)_{S})\cong (\mathcal{E},\mathcal{F},\phi,\psi)_{S}$. Then $(\mathcal{E},\mathcal{F},\phi,\psi)_{S}$ admits a $\text{T}_{0}$-equivariant structure:
$$\sigma_{0}^{*}(\mathcal{E},\mathcal{F},\phi,\psi)_{S}\cong \tilde{p}_{2}^{*}(\mathcal{E},\mathcal{F},\phi,\psi)_{S},$$ where $\tilde{p}_{2}:\text{T}_{0}\times\mathfrak{N}_{\text{HFT}}\rightarrow\mathfrak{N}_{\operatorname{HFT}}$ is the projection onto the second factor.
\end{prop}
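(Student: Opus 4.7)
The plan is to mirror the argument used in Proposition \ref{equiv1} verbatim, replacing the geometric action $\textbf{T}$ on $X$ with the non-geometric action $\operatorname{T}_{0}$ on the rescaling factors of $\mathcal{O}_{X}(-n)^{\oplus r}$. First, I would use the hypothesis that $\sigma_{0}(t_{0},(\mathcal{E},\mathcal{F},\phi,\psi)_{S})\cong (\mathcal{E},\mathcal{F},\phi,\psi)_{S}$ for all closed points $t_{0}\in\operatorname{T}_{0}$ to conclude that the two $\operatorname{T}_{0}\times S$-families $\sigma_{0}^{*}(\mathcal{E},\mathcal{F},\phi,\psi)_{S}$ and $\tilde{p}_{2}^{*}(\mathcal{E},\mathcal{F},\phi,\psi)_{S}$ classify maps $\operatorname{T}_{0}\times S\to\mathfrak{M}^{(r,n)}_{s,\operatorname{HFT}}(\tau')$ which agree at every closed point.

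Next, letting $q:X\times S\to S$ denote the projection, I would form the sheaf
\begin{equation*}
\mathcal{L}:=\operatorname{Ext}^{0}_{\operatorname{id}_{\operatorname{T}_{0}}\times q}\bigl(\tilde{p}_{2}^{*}(\mathcal{E},\mathcal{F},\phi,\psi)_{S},\ \sigma_{0}^{*}(\mathcal{E},\mathcal{F},\phi,\psi)_{S}\bigr)
\end{equation*}
on $\operatorname{T}_{0}\times S$. The crucial input is Lemma \ref{pf-auto-1}: a $\tau'$-stable highly frozen triple has trivial automorphism group, i.e.\ it is a simple object in the relevant categorical sense. Combining this fiberwise simplicity with cohomology and base change exactly as in Proposition \ref{equiv1}, one obtains a canonical trivialization $\mathcal{L}\xrightarrow{\rho}\mathcal{O}_{\operatorname{T}_{0}\times S}$ which is an isomorphism of line bundles.

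The preimage $\rho^{-1}(1)$ is then a distinguished global section of $\mathcal{L}$, and therefore corresponds to a morphism of families $\tilde{p}_{2}^{*}(\mathcal{E},\mathcal{F},\phi,\psi)_{S}\to\sigma_{0}^{*}(\mathcal{E},\mathcal{F},\phi,\psi)_{S}$. Pointwise over $\operatorname{T}_{0}\times S$ this morphism must be nonzero (because $1\in\mathcal{O}$ is nowhere vanishing), and by simplicity of the corresponding fiber it is a nonzero scalar multiple of the identity, hence an isomorphism. A morphism of families which is a fiberwise isomorphism is an isomorphism, producing the required $\operatorname{T}_{0}$-equivariant structure $\sigma_{0}^{*}(\mathcal{E},\mathcal{F},\phi,\psi)_{S}\cong\tilde{p}_{2}^{*}(\mathcal{E},\mathcal{F},\phi,\psi)_{S}$.

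The main point to verify carefully will be that the sheaf $\mathcal{L}$ is indeed a line bundle trivialized as stated; unlike the geometric case, the action $\sigma_{0}$ is purely on the rank-$r$ component and does not involve pulling back along an automorphism of $X$, so one must check that the identification $\operatorname{Ext}^{0}_{\operatorname{id}_{\operatorname{T}_{0}}\times q}\bigl((\mathcal{E},\mathcal{F},\phi,\psi)_{\operatorname{T}_{0}\times S},(\mathcal{E},\mathcal{F},\phi,\psi)_{\operatorname{T}_{0}\times S}\bigr)\cong\mathcal{O}_{\operatorname{T}_{0}\times S}$ passes through the $\sigma_{0}$-twist correctly. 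This reduces, via the commutativity of the diagram defining $\sigma_{0}$ in \eqref{nu-act}, to the statement that the diagonal matrix $\nu$ acts compatibly on sections, which is immediate since the morphism $\phi\circ\nu$ differs from $\phi$ by an automorphism of $E=\mathcal{O}_{X}(-n)^{\oplus r}$ that is a unit at every point of $\operatorname{T}_{0}\times S$. Once this is in place, the rest of the argument is a direct transcription of the proof of Proposition \ref{equiv1}.
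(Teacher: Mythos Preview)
Your proposal is correct and follows essentially the same approach as the paper: the paper's proof simply says to apply the argument of Proposition \ref{equiv1} with $\operatorname{T}_{0}$ in place of $\textbf{T}$, invoking the simpleness of $\tau'$-limit stable highly frozen triples (Lemma \ref{pf-auto-1}) to identify the relative $\operatorname{Ext}^{0}$ sheaf with $\mathcal{O}_{\operatorname{T}_{0}\times S}$. Your additional care in verifying that the $\sigma_{0}$-twist interacts correctly with this identification is a reasonable elaboration of a point the paper leaves implicit.
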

\begin{proof} Apply the proof of Proposition \ref{equiv1} to $\text{T}_{0}$ and the universal family $(\mathbb{E},\mathbb{F},\phi,\psi)$ and use the simpleness property of stable highly frozen triples. 
\end{proof}
Now use the notation $\mathcal{T}:=\textbf{T}\times \text{T}_{0}$.
\begin{prop}\label{finalword}
\emph A $\mathcal{T}$-equivariant stable highly frozen triple is decomposable into $r$ copies of $\textbf{T}$-equivariant highly frozen triples of rank 1, $\mathcal{O}_{X}(-n)_{i}\xrightarrow{s_{i}} F_{i}, 1\leq i \leq r$. In other words the $\mathcal{T}$-equivariant highly frozen triples satisfy the following identity:
\begin{equation}\label{oplus}
[\mathcal{O}^{\oplus r}_{X}(-n)\xrightarrow{s} F]^{\mathcal{T}}\cong\bigoplus_{i=1}^{r}\left[\mathcal{O}_{X}(-n)\xrightarrow{s_{i}} F_{i}\right]^{\textbf{T}}.
\end{equation}
\end{prop}
\begin{proof}
By Proposition \ref{equiv2}, the action of $\operatorname{T}_{0}$ on a point $p\in\mathfrak{N}_{\operatorname{HFT}}$  induces a $\operatorname{T}_{0}$-weight decomposition on $\mathcal{O}_{X}^{\oplus r}(-n)$. Let $(w_{1},\cdots,w_{r})$ denote the $r$-tuple of weights. In fact $w_{i}$ for $1\leq i\leq r$ are given by $r$ tuples$$(1,0,\cdots,0), (0,1,\cdots,0),\cdots, (0,\cdots,0,1).$$ Now let $M$ be the module associated to the sheaf $\mathcal{O}_{X}^{\oplus r}(-n)$, and denote by $M^{0}$ the module associated to the sheaf $\mathcal{O}_{X}(-n)$. The graded piece of $M^{\mathcal{T}}$ which sits in $w_{i}$ weight-space is given by the module $0\oplus \cdots \oplus M^{0}\oplus \cdots \oplus 0$, where $M^{0}$ sits in the $i$'th position which we denote by $M^{0}_{i}$. Moreover observe that, under the action of $\textbf{T}$, we obtain a $\textbf{T}$-graded weight decomposition of $M^{0}_{i}$ itself, given as$$M^{0}_{i}\cong \bigoplus_{(m_{1},m_{2},m_{3})}M^{0}_{i}(m_{1},m_{2},m_{3})$$Therefore, the $\mathcal{T}$-weight decomposition of $M^{\mathcal{T}}$ is given by
\begin{align}
&
M^{\mathcal{T}}\cong\bigoplus_{i=1}^{r}\left(\bigoplus_{(m_{1},m_{2},m_{3})}M^{0}_{i}(m_{1},m_{2},m_{3})\right)\notag\\
\end{align}
which means that, sheaf theoretically, the following $\mathcal{T}$-equivariant isomorphism holds true$$[\mathcal{O}_{X}^{\oplus r}(-n)]^{\mathcal{T}}\cong \bigoplus_{i=1}^{r}\mathcal{O}^{\textbf{T}}_{X}(-n).$$ 
Now apply the same argument as above to $F$ and use the property of morphisms between two graded sheaves of modules (c.f. \cite{DeMazur}) to see that $F$ will have a $\mathcal{T}$-weight decomposition compatible to that of $\mathcal{O}_{X}^{\oplus r}(-n)$ and this proves the statement of Proposition \ref{finalword}.
\end{proof}
\begin{remark}
By Proposition \ref{finalword} the $\tau'$-stable highly frozen triple on left hand side of isomorphism \eqref{oplus} satisfies the condition of having zero dimensional cockerel. Therefore, the cockerel sheaf induced by the right hand side of \eqref{oplus} is also zero dimensional, which means the $r$-fold sum of the highly frozen triples of rank $1$ appearing on the right hand side of \eqref{oplus} is stable in the sense of Lemma \ref{PT-stab}. 

Note that by Lemma \ref{lemma2}, our notion of $\tau'$-stability condition in Lemma \ref{lemma2} is obtained as the $q(m)\to \infty$ limit of Le Potier's notion of polynomial stability condition with strict inequality (c.f. Equation \eqref{nonsense2}). Therefore, we have avoided the strictly semistable objects in our construction. Moreover, the only automorphisms of our higher rank objects on both sides of isomorphism \eqref{oplus} are given by  identity. In order to see this, replace $F$ with $\oplus_{i=1}^{r}F_{i}$ in Lemma \ref{pf-auto-1}.
\end{remark}
\begin{lemma}\label{oplus-stable}
 Consider the right hand side of $\mathcal{T}$-equivariant isomorphism \eqref{oplus}. Then it is true that each $s_{i}:=\mathcal{O}^{\textbf{T}}_{X}(-n)\rightarrow F_{i}^{\textbf{T}}$ satisfies PT stability condition. 
 \end{lemma}
 \begin{proof}
 We show that $\text{Coker}(s_{i})$ has zero dimensional support. Given a $\mathcal{T}$-graded morphism in the sense of Proposition \ref{finalword}$$\mathcal{O}^{\textbf{T}}_{X}(-n)\oplus \cdots\oplus \mathcal{O}^{\textbf{T}}_{X}(-n)\xrightarrow{s_{1},\cdots, s_{r}}F^{\textbf{T}}_{1}\oplus \cdots \oplus F^{\textbf{T}}_{r},$$let $Q_{1,\cdots, r}:=\text{Coker}(s_{1}, \cdots, s_{r})$. Moreover, let $Q_{i}:=\text{Coker}(s_{i})$. Now let $\mathcal{Q}:=\text{Coker}(s)$ be the cokernel sheaf induced by the left hand side of isomorphism \eqref{oplus}. It is then true by Proposition \ref{finalword} that $$\mathcal{Q}\cong^{\mathcal{T}} Q_{1,\cdots, r},$$which implies that$$\text{Supp}(\mathcal{Q})\cong \bigcup_{i=1}^{r}\text{Supp}(Q_{i}).$$Now assume that there exists some $Q_{i}$ for which $d_{i}:=\text{dim}(\text{Supp}(Q_{i}))\geq 1$ , then we get a contradiction with $\mathcal{Q}$, being zero-dimensional, which contradicts the stability of $[\mathcal{O}^{\oplus r}_{X}(-n)\xrightarrow{s} F]^{\mathcal{T}}$. Therefore, $d_{i}=0$ for all $1\leq i\leq r$ which, together with Remark \ref{PT-stab}, proves the statement.
 \end{proof}
\begin{prop}\label{r-PT}
Let $\textbf{Q}$ denote a $\mathcal{T}$-fixed component of $\mathfrak{N}_{\text{HFT}}$. The following quasi-isomorphism holds true over $\mathcal{D}^{b}(\textbf{Q})$ $$\mathbb{G}^{\bullet}\cong \bigoplus_{i=1}^{r} (\mathbb{E}^{\bullet,\textbf{T}}_{PT})$$ where $\mathbb{E}^{\bullet,\textbf{T}}_{PT}$ is the $\textbf{T}$-fixed PT deformation-obstruction theory of perfect amplitude $[-1,0]$.
\end{prop}
\begin{proof}Let $\iota:\textbf{Q}\hookrightarrow \mathfrak{N}_{\text{HFT}}$ denote a natural embedding of a $\mathcal{T}$-fixed component of $\mathfrak{N}_{\text{HFT}}$. By Proposition \ref{finalword} over $\textbf{Q}$ we have $$[\mathcal{O}^{\oplus r}_{X}(-n)\rightarrow F]^{\mathcal{T}}\cong\bigoplus_{i=1}^{r}\left[\mathcal{O}_{X}(-n)\rightarrow F_{i}\right]^{\textbf{T}}.$$Now let $\mathcal{U}=\coprod_{\alpha}\mathcal{U}_{\alpha}$ be an atlas of affine schemes for $\textbf{Q}$. Then apply Proposition \ref{finalword} to the universal object $\mathbb{I}^{\bullet}\in \mathcal{D}^{b}(X\times \mathfrak{N}_{\text{HFT}})$ and conclude that $$\iota^*R\mathscr{H}om(\mathbb{I}^{\bullet},\mathbb{I}^{\bullet})^{\mathcal{T}}_{0}|_{\mathcal{U}_{\alpha}\times X}\cong \bigoplus_{i=1}^{r}R\mathscr{H}om(\mathbb{I}^{\bullet, \textbf{T}}_{i},\mathbb{I}^{\bullet, \textbf{T}}_{i})_{0},$$where $$\mathbb{I}^{\bullet, \textbf{T}}_{i}\cong [O_{\mathcal{U}_{\alpha}\times X}(-n)\rightarrow \mathcal{F}_{i}]^{\textbf{T}},$$is the universal $\textbf{T}$-equivariant PT stable pair.

Now by construction in Theorem \ref{reldef-f} obtain that $\iota^{*}\mathbb{E}^{\bullet}|_{\mathcal{U}_{\alpha}}$ is given by a 4 term complex of vector bundles with $\text{h}^{-2}(\iota^{*}\mathbb{E}^{\bullet}|_{\mathcal{U}_{\alpha}})\cong (\mathbb{G}_{m})^{r}\otimes \mathcal{O}_{\mathcal{U}_{\alpha}}$. Then, by construction in Theorem \ref{2step-trunc} and Diagram \eqref{q-isom} we can immediately see that $$\text{h}^{-1}(\iota^{*}\mathbb{G}^{\bullet}|_{\mathcal{U}_{\alpha}})\cong \bigoplus_{i=1}^{r} \text{Ext}^{1}(\mathbb{I}^{\bullet, \textbf{T}}_{i,\alpha},\mathbb{I}^{\bullet, \textbf{T}}_{i,\alpha})_{0}\,\,\,\text{and}\,\,\, \text{h}^{0}(\iota^{*}\mathbb{G}^{\bullet}|_{\mathcal{U}_{\alpha}})\cong \bigoplus_{i=1}^{r} \text{Ext}^{2}(\mathbb{I}^{\bullet, \textbf{T}}_{i,\alpha},\mathbb{I}^{\bullet, \textbf{T}}_{i,\alpha})_{0}.$$In other words we have the quasi-isomorphism in $\mathcal{D}^{b}(\mathcal{U}_{\alpha})$: 
\begin{equation}\label{local}
\iota^{*}\mathbb{G}^{\bullet}|_{\mathcal{U}_{\alpha}}\cong \bigoplus_{i=1}^{r} (\mathbb{E}^{\bullet,\textbf{T}}_{PT})|_{\mathcal{U}_{\alpha}}.
\end{equation}
 Now it is immediately seen that the local isomorphisms \eqref{local} glue to each other globally over $\textbf{Q}$.
\end{proof}
\begin{lemma}\label{auto}
Assumption \ref{descent assumption} in Part I holds true over $\textbf{Q}$. 
\end{lemma}
 \begin{proof}
 Use Proposition \ref{finalword} and Proposition \ref{r-PT} and obtain that $\iota^*\mathbb{G}^{\bullet}$ over $\textbf{Q}$ is given by $r$ copies of $\textbf{T}$-fixed PT deformation-obstruction theory of perfect amplitude $[-1,0]$. Now by construction of Chang-Li \cite[Definition 3.1]{a70} we know that a perfect obstruction theory is also semi-perfect which means that Assumption  \ref{descent assumption} is automatically satisfied for the restricted complex $\iota^{*}\mathbb{G}^{\bullet}$ globally over $\textbf{Q}$.
\end{proof}
\subsection{The threefold equivariant vertex for HFT}\label{sec15}
In this section let us assume $X$ is given as toric Calabi-Yau threefold with the action $\textbf{T}$. Following Proposition \ref{finalword}, the identification of the highly frozen triples of type $(r,P_{F})$ with $r$-fold copies of PT stable pairs makes it easy to see that the $\mathcal{T}$-fixed components of the moduli stack of highly frozen triples are obtained as $r$-fold product of $\textbf{T}$-fixed components of the moduli stack of stable pairs, which are conjectured by Pandharipande and Thomas in \cite[Conjecture 2]{a17} to be nonsingular and compact. Let $\textbf{Q}$ denote the $\mathcal{T}$-fixed locus of $\mathfrak{N}_{\text{HFT}}$. We assume that $\textbf{Q}$ is nonsingular, connected and compact. Let $\iota_{\textbf{Q}}: \textbf{Q}\hookrightarrow\mathfrak{N}_{\text{HFT}}$ denote the natural embedding. Let $\mathbb{G}^{\bullet}_{\textbf{Q}}:=(\iota_{\textbf{Q}})^{*}\mathbb{G}^{\bullet}$ where $\mathbb{G}^{\bullet}$ is the deformation-obstruction theory obtained in Theorem \ref{2step-trunc}. Let $\mathbb{G}^{\bullet,\mathcal{T}}_{\textbf{Q}}$ and $\mathbb{G}^{\bullet,\textbf{m}}_{\textbf{Q}}$ denote the sub-bundles of $\mathbb{G}^{\bullet}_{\textbf{Q}}$ with zero and nonzero characters respectively. By Theorem \ref{2step-trunc}, the $\mathcal{T}$-fixed deformation-obstruction theory restricted to $\textbf{Q}$ is given by a map of perfect complexes:
\begin{equation}\label{smooth-obs}
\mathbb{G}^{\bullet,\mathcal{T}}_{\textbf{Q}}\xrightarrow{\phi} \mathbb{L}^{\bullet}_{\textbf{Q}}.
\end{equation} 
Here $\mathbb{G}^{\bullet,\mathcal{T}}_{\textbf{Q}}$ is represented by a two term complex of vector bundles $G_{\textbf{Q}}^{-1,\mathcal{T}}\rightarrow G_{\textbf{Q}}^{0,\mathcal{T}}$. By the virtual localization formula \cite{a25}:

\begin{equation}\label{moving}
\bigg[\mathfrak{N}_{\text{HFT}}\bigg]^{vir}=\sum_{\textbf{Q}\subset\mathfrak{N}_{\operatorname{HFT}}}\iota_{\textbf{Q}*}\bigg(\frac{e(G^{\textbf{m}}_{1,\textbf{Q}})}{e(G^{\textbf{m}}_{0,\textbf{Q}})}\cap [\textbf{Q}]^{vir}\bigg).
\end{equation}
Where $G^{\textbf{m}}_{0,\textbf{Q}}$ and $G^{\textbf{m}}_{1,\textbf{Q}}$ denote the dual of $G^{0,\textbf{m}}_{\textbf{Q}}$ and $G^{-1,\textbf{m}}_{\textbf{Q}}$ respectively. Now we rewrite \eqref{moving} with respect to the Euler classes $e(G_{1,\textbf{Q}})$ and $e(G_{0,\textbf{Q}})$ where $G_{0,\textbf{Q}}$ and $G_{1,\textbf{Q}}$ denote the dual of $G^{0}_{\textbf{Q}}$ and $G^{-1}_{\textbf{Q}}$ respectively. In order to do so, we use the description of the virtual tangent space with respect to the $\mathcal{T}$-fixed deformation-obstruction theory.  If $\textbf{Q}$ is assumed to be nonsingular, then $\mathbb{L}^{\bullet}_{\textbf{Q}}:=0\rightarrow \Omega_{\textbf{Q}}.$ The $\mathcal{T}$-fixed deformation-obstruction theory \eqref{smooth-obs} induces a composite morphism $G_{\textbf{Q}}^{-1,\mathcal{T}}\rightarrow G_{\textbf{Q}}^{0,\mathcal{T}}\xrightarrow{\phi}\Omega_{\textbf{Q}}.$ The kernel of this composite morphism is the obstruction bundle $\textbf{K}$ and by definition $[\textbf{Q}]^{vir}=e(\textbf{K}^{\vee})\cap [\textbf{Q}].$ The $\mathcal{K}$-theory class of $\textbf{K}^{\vee}$ is given as follows:
\begin{equation}\label{bdle}
[\textbf{K}^{\vee}]=[G_{1,\textbf{Q}}^{\mathcal{T}}]-[G_{0,\textbf{Q}}^{\mathcal{T}}]+[T_{\textbf{Q}}],
\end{equation} 
where $G^{\mathcal{T}}_{0,\textbf{Q}}$ and $G^{\mathcal{T}}_{1,\textbf{Q}}$ denote the dual of $G^{0,\mathcal{T}}_{\textbf{Q}}$ and $G^{-1,\mathcal{T}}_{\textbf{Q}}$ respectively. Therefore one has:
\begin{equation}\label{dual-K}
e(\textbf{K}^{\vee})=\frac{e(G_{1,\textbf{Q}}^{\mathcal{T}})}{e(G_{0,\textbf{Q}}^{\mathcal{T}})}\cdot e(T_{\textbf{Q}}).
\end{equation}
By \eqref{moving} and \eqref{dual-K} the virtual fundamental class of $\mathfrak{N}_{\text{HFT}}$ is obtained as
\begin{equation}\label{virt-loc-form}
\bigg[\mathfrak{N}_{\text{HFT}}\bigg]^{vir}=\sum_{\textbf{Q}\subset\mathfrak{N}_{\operatorname{HFT}}}\iota_{\textbf{Q}*}\bigg(\frac{e(G_{1,\textbf{Q}})}{e(G_{0,\textbf{Q}})}\cdot e(T_{\textbf{Q}})\cap [\textbf{Q}]\bigg).
\end{equation}
Now we compute the difference $[G_{0,\textbf{Q}}]-[G_{1,\textbf{Q}}]$ in the $\mathcal{T}$-equivariant $\mathcal{K}$-theory of $\textbf{Q}$. Consider a point $p\in \textbf{Q}$ represented by the complex $I^{\bullet\mathcal{T}}:=[\mathcal{O}^{\oplus r}_{X}(-n)\rightarrow F]^{\mathcal{T}}.$ The difference $[G_{0,\textbf{Q}}]-[G_{1,\textbf{Q}}]$ over this point is the virtual tangent space at this point. We use the quasi isomorphism in diagram \eqref{q-isom} to compute the virtual tangent space:  
\begin{align}\label{virtg}
&
\mathcal{T}^{\textbf{Q}}_{I^{\bullet}}=[\text{Coker}(d')]-[\text{Ker}(d)]=
\left([\pi^{*}E^{1}]-[\pi^{*}E^{0}]+[\pi^{*}E^{-1}]-[\pi^{*}E^{-2}]\right)+\left(\cancel{[T_{\pi}]}-\cancel{[\Omega_{\pi}]}\right),
\end{align}
where $E^{i}$ for $i=-1,\cdots,2$ are described in \eqref{vec-cplx}, and the cancellation on the right hand side of  \eqref{virtg} is due to isomorphism of $\Omega_{\pi}$ and $T_{\pi}$, which is seen from their triviality. Now since the point $p\in \textbf{Q}$ is represented by $I^{\bullet,\mathcal{T}}$, the following identities hold true:
\begin{align}\label{virt-tang}
&
\mathcal{T}^{\textbf{Q}}_{I^{\bullet}}=\sum_{i=0}^{3}(-1)^{i}\cdot[\text{Ext}^{i}(I^{\bullet\mathcal{T}},I^{\bullet\mathcal{T}})_{0}]=[\chi(\mathcal{O}_{X},\mathcal{O}_{X})]-[\chi(I^{\bullet\mathcal{T}},I^{\bullet\mathcal{T}})].
\end{align}
\subsubsection{Computation of $[\chi(\mathcal{O}_{X},\mathcal{O}_{X})]-[\chi(I^{\bullet},I^{\bullet})]$}\label{sec16}
Using \v{C}ech cohomology with respect to an affine open cover $\bigcup_{\alpha} \mathcal{U}_{\alpha}\rightarrow\mathfrak{N}_{\operatorname{HFT}}$ we obtain:$$\chi(I^{\bullet},I^{\bullet})=\sum_{i,j=0}^{3}(-1)^{i+j}\mathfrak{C}^{i}(\mathcal{E}xt^{j}(I^{\bullet},I^{\bullet}))\,\,\,\text{and}\,\,\,\chi(\mathcal{O}_{X},\mathcal{O}_{X})=\sum_{i,j=0}^{3}(-1)^{i+j}\mathfrak{C}^{i}(\mathcal{E}xt^{j}(\mathcal{O}_{X},\mathcal{O}_{X})).$$By definition, the sheaf $F$ appearing in the stable highly frozen triples is pure of dimension 1. Therefore, the restriction of $F$ over the triple and quadruple intersections of $\mathcal{U}_{\alpha}$'s vanishes and over such intersections $I^{\bullet}\cong\mathcal{O}^{\oplus r}_{X}(-n)$. 
\begin{defn}
Define:
\begin{align}\label{chi-I}
 &
 \mathcal{T}^{1}_{[I^{\bullet}]}=\bigoplus_{\alpha}\left(\text{h}^{0}(\mathcal{U}_{\alpha},\mathcal{O}_{X})-\sum_{j}(-1)^{j}\text{h}^{0}(\mathcal{U}_{\alpha},\mathcal{E}xt^{j}(I^{\bullet},I^{\bullet})))\right)\notag\\
 &
\mathcal{T}^{2}_{[I^{\bullet}]}=\bigoplus_{\alpha,\beta}\left(\text{h}^{0}(\mathcal{U}_{\alpha\beta},\mathcal{O}_{X})-\sum_{j}(-1)^{j}\text{h}^{0}(\mathcal{U}_{\alpha\beta},\mathcal{E}xt^{j}(I^{\bullet},I^{\bullet})))\right)\notag\\
  &
\mathcal{T}^{3}_{[I^{\bullet}]}=\bigoplus_{\alpha,\beta,\gamma}\left((1-r^{2}) \text{h}^{0}(\mathcal{U}_{\alpha\beta\gamma},\mathcal{O}_{X})\right)\, \text{and} \,
\mathcal{T}^{4}_{[I^{\bullet}]}=\bigoplus_{\alpha,\beta,\gamma,\delta}(1-r^{2}) \text{h}^{0}(\mathcal{U}_{\alpha\beta\gamma\delta},\mathcal{O}_{X}).\notag\\
\end{align}
\end{defn}
By Definition \ref{chi-I} and Equation \eqref{virt-tang}, the virtual tangent space is obtained as:
\begin{equation}\label{virt-tang2}
\mathcal{T}_{[I^{\bullet}]}=\mathcal{T}^{1}_{[I^{\bullet}]}-\mathcal{T}^{2}_{[I^{\bullet}]}+\mathcal{T}^{3}_{[I^{\bullet}]}-\mathcal{T}^{4}_{[I^{\bullet}]}.
\end{equation}
Now let $(t_{1},t_{2},t_{3})$ be defined as the weights of $\textbf{T}$. Moreover, let $(w_{1},\cdots,w_{r})$ be defined as weight of the action of $\text{T}_{0}$. Here $w_{i}$ is given by tuples $(0,\cdots,1,\cdots,0)$ where $1$ is positioned in the $i$'th position in the tuple; We need to compute the $\mathcal{T}$-character of $\mathcal{T}^{i}_{[I^{\bullet}]}$ for $i=1,\cdots 4$ in \eqref{virt-tang2}. Choose a \v{C}ech cover $\mathcal{U}=\bigcup_{\alpha}\mathcal{U}_{\alpha}$ of $X$. The restriction of each copy of $O^{\textbf{T}}_{X}(-n)\rightarrow F^{\textbf{T}}_{i}$ to the underlying supporting curve $\mathcal{C}_{\alpha}$ of $F^{\textbf{T}}_{i}$ induces an exact sequence of the form:
\begin{equation}\label{exact}
0\rightarrow \mathcal{O}^{\textbf{T}}_{\mathcal{C}_{\alpha}}(-n)\rightarrow (F^{\textbf{T}}_{i})_{\alpha}\rightarrow (Q^{\textbf{T}}_{i})_{\alpha}\rightarrow 0,
\end{equation}
By $\tau'$-stability, the sheaf $(F^{\textbf{T}}_{i})_{\alpha}$ may be zero and if it is nonzero then the cokernel $(Q^{\textbf{T}}_{i})_{\alpha}$ has to have zero dimensional support. Note that, following Proposition \ref{finalword} and Lemma \ref{oplus-stable}, we have that $\textbf{Q}_{\alpha}:=\bigoplus_{i=1}^{r}(Q^{\textbf{T}}_{i})_{\alpha},$ where each $(Q^{\textbf{T}}_{i})_{\alpha}$ has zero dimensional support. Now we use the strategy similar to \cite[Section 4.4]{a18} and \cite[Section 4.7]{a15} to compute the $\textbf{T}$ character of each summand, $(F^{\textbf{T}}_{i})_{\alpha}$. Let $\textbf{F}^{\textbf{T}}_{i,\alpha}$ denote the $\textbf{T}$-character of each summand. Let $(\textbf{P}_{i})_{\alpha}(t_{1},t_{2},t_{3})$ denote the associated Poincar{\'e} polynomial of $(\mathbb{I}^{\bullet}_{i})_{\alpha}:=\bigg(\mathcal{O}^{\textbf{T}}_{X}(-n)\rightarrow F_{i}^{\textbf{T}}\bigg)\mid_{\alpha}.$ The Poincar{\'e} polynomial of $(\mathbb{I}^{\bullet}_{i})_{\alpha}$ is related to the $\textbf{T}$ character of $F_{i}$ as:
\begin{equation}
\textbf{F}^{\textbf{T}}_{i,\alpha}=\frac{C^{n}_{\alpha}+(\textbf{P}_{i})_{\alpha}}{(1-t_{1})(1-t_{2})(1-t_{3})},
\end{equation}
where the correction term $C^{n}_{\alpha}$ is the $\textbf{T}$-character of $\mathcal{O}_{X}(-n)$ which depends on the choice of the equivariant structure. Now the $\mathcal{T}$-character of $F_{i}$ is given by:
\begin{equation}\label{Fi}
\textbf{F}^{\mathcal{T}}_{i,\alpha}=w_{i}\cdot\textbf{F}^{\textbf{T}}_{i,\alpha}= \frac{C^{n}_{\alpha}\cdot w_{i}+w_{i}\cdot(\textbf{P}_{i})_{\alpha}}{(1-t_{1})(1-t_{2})(1-t_{3})},
\end{equation}
The $\textbf{T}$-character of each $tr_{\chi}((\mathbb{I}^{\bullet}_{i})_{\alpha},(\mathbb{I}^{\bullet}_{i})_{\alpha})$ is given as follows  \cite[Section 4.7]{a15}:
\begin{equation}
tr_{\chi}((\mathbb{I}^{\bullet}_{i})_{\alpha},(\mathbb{I}^{\bullet}_{i})_{\alpha})=\frac{w_{i}\cdot w_{i}^{-1}\cdot(\textbf{P}_{i})_{\alpha}\overline{(\textbf{P}_{i}})_{\alpha}}{(1-t_{1})(1-t_{2})(1-t_{3})}=\frac{(\textbf{P}_{i})_{\alpha}\overline{(\textbf{P}_{i}})_{\alpha}}{(1-t_{1})(1-t_{2})(1-t_{3})}.
\end{equation}
The dual bar operation is negation on $\mathcal{K}(\textbf{Q}\mid_{\mathcal{U}_{\alpha}})$ and $t_{i}\rightarrow \frac{1}{t_{i}}$ on the equivariant variables $t_{i}$. Since $\mathbb{I}^{\bullet,\mathcal{T}}_{\alpha}:=\bigoplus_{i=1}^{r}(\mathbb{I}^{\bullet,\textbf{T}}_{i})_{\alpha}$ the $\mathcal{T}$-character of $\chi(\mathbb{I}^{\bullet,\mathcal{T}}_{\alpha},\mathbb{I}^{\bullet,\mathcal{T}}_{\alpha})$ is obtained as a sum:
\begin{equation} 
tr_{\chi}(\mathbb{I}^{\bullet,\mathcal{T}}_{\alpha},\mathbb{I}^{\bullet,\mathcal{T}}_{\alpha})=\sum_{\begin{subarray}{1} 1\leq i\leq r\\ 1\leq j\leq r\end{subarray}}\frac{w_{i}w_{j}^{-1}\cdot(\textbf{P}_{i})_{\alpha}\overline{(\textbf{P}_{j}})_{\alpha}}{(1-t_{1})(1-t_{2})(1-t_{3})}.
\end{equation}
Moreover the $\mathcal{T}$-character of $F_{\alpha}$ appearing in $\mathbb{I}^{\bullet,\mathcal{T}}_{\alpha}$ is given by :
\begin{equation}
\textbf{F}^{\mathcal{T}}_{\alpha}=\frac{\sum_{i=1}^{r}w_{i}\cdot C^{n}_{\alpha}+\sum_{i=1}^{r} w_{i}\cdot(\textbf{P}_{i})_{\alpha}}{(1-t_{1})(1-t_{2})(1-t_{3})}.
\end{equation}
Now the $\mathcal{T}$-character of the $\alpha$-summand of $\mathcal{T}^{1}_{[I^{\bullet}]}$ in \eqref{chi-I} is given by:
\begin{equation}
\frac{1-\sum_{\begin{subarray}{1} 1\leq i\leq r\\ 1\leq j\leq r\end{subarray}}w_{i}w^{-1}_{j}\cdot(\textbf{P}_{i})_{\alpha}\overline{(\textbf{P}_{j})}_{\alpha}}{(1-t_{1})(1-t_{2})(1-t_{3})},
\end{equation} 
which using \eqref{Fi}, can eventually be written as a function of $\textbf{F}^{\mathcal{T}}_{\alpha}$:
\begin{align}\label{trace1}
tr_{R-_{\chi((\mathbb{I}^{\bullet,\mathcal{T}})_{\alpha},(\mathbb{I}^{\bullet,\mathcal{T}})_{\alpha})}}=&\textbf{F}^{\mathcal{T}}_{\alpha}\cdot (\sum_{j=1}^{r}w^{-1}_{j})\cdot\overline{C^{n}_{\alpha}}-\frac{\overline{\textbf{F}^{\mathcal{T}}_{\alpha}}\cdot(\sum_{i=1}^{r}w_{i})\cdot  C^{n}_{\alpha}}{t_{1}t_{2}t_{3}}\notag\\
&
+\textbf{F}^{\mathcal{T}}_{\alpha}\overline{\textbf{F}^{\mathcal{T}}_{\alpha}}\frac{(1-t_{1})(1-t_{2})(1-t_{3})}{t_{1}t_{2}t_{3}}+\frac{1-(\sum_{i,j=1}^{r}w_{i}w^{-1}_{j})\cdot C^{n}_{\alpha}\overline{C^{n}_{\alpha}}}{(1-t_{1})(1-t_{2})(1-t_{3})}\notag\\
\end{align}
Now we compute the $\mathcal{T}$-character of $\mathcal{T}^{2}_{[I^{\bullet}]}$, $\mathcal{T}^{3}_{[I^{\bullet}]}$ and $\mathcal{T}^{4}_{[I^{\bullet}]}$. Assume that $\mathcal{U}_{\alpha\beta}$ is the affine patch, over which the equivariant parameter $t_{1}$ is invertible. Given $F^{\mathcal{T}}=\bigoplus_{i=1}^{r}F^{\textbf{T}}_{i}$, let $(F^{\textbf{T}}_{i})_{\alpha\beta}$ denote the restriction of $F^{\textbf{T}}_{i}$ to $\mathcal{U}_{\alpha\beta}$. Let $\textbf{F}^{\textbf{T}}_{i,\alpha\beta}=\sum_{k_{2},k_{3}\in \mu_{\alpha\beta}}t_{2}^{k_{2}}t_{3}^{k_{3}}$ denote the $\textbf{T}$-character associated to this restriction (c.f. \cite[ Equation 4.10]{a15}). We have that $\textbf{F}^{\mathcal{T}}_{\alpha\beta}=\sum_{i=1}^{r}\textbf{F}^{\textbf{T}}_{i,\alpha\beta}\cdot w_{i}.$ 

By the same argument as above, one relates the $\mathcal{T}$-character of $\alpha\beta$'th summand of the virtual tangent space $\mathcal{T}^{2}_{[I^{\bullet}]}$ in \eqref{chi-I} to $\textbf{F}^{\mathcal{T}}_{\alpha\beta}$:
\begin{align}\label{trace2}
tr_{R-_{\chi((\mathbb{I}^{\bullet})_{\alpha\beta},(\mathbb{I}^{\bullet})_{\alpha\beta})}}=&\bigg[\textbf{F}^{\mathcal{T}}_{\alpha\beta}(\sum_{j=1}^{r}w^{-1}_{j})\cdot\overline{C^{n}_{\alpha\beta}}-\frac{\overline{\textbf{F}^{\mathcal{T}}_{\alpha\beta}}\cdot (\sum_{i=1}^{r}w_{i})\cdot  C^{n}_{\alpha\beta}}{t_{2}t_{3}}\notag\\
&
+\textbf{F}^{\mathcal{T}}_{\alpha\beta}\overline{\textbf{F}^{\mathcal{T}}_{\alpha\beta}}\frac{(1-t_{2})(1-t_{3})}{t_{2}t_{3}}+\frac{1-(\sum_{i,j=1}^{r}w_{i}w^{-1}_{j})\cdot C^{n}_{\alpha\beta}\overline{C^{n}_{\alpha\beta}}}{(1-t_{2})(1-t_{3})}\bigg]\cdot \delta(t_{1}),
\end{align}
here $C^{n}_{\alpha\beta}$ is a function of $n$ and the correction term that needs to be inserted into description of the Poincar{\'e} polynomial of $\mathcal{O}_{X}\mid_{\mathcal{U}_{\alpha\beta}}$ in order to obtain the Poincar{\'e} polynomial of $\mathcal{O}_{X}(-n)\mid_{\mathcal{U}_{\alpha\beta}}$. Here, we have used the notation $\delta(t_{1})=\sum_{k\in \mathbb{Z}}t^{k}_{i}.$ Now assume $\mathcal{U}_{\alpha\beta\gamma}$ is the affine patch, over which the equivariant parameters $t_{1}$ and $t_{2}$ are invertible. The $\alpha,\beta,\gamma$'th summand of $\mathcal{T}^{3}_{[I^{\bullet}]}$ in \eqref{chi-I} is obtained as follows:
\begin{align}\label{edge3}
&
tr_{R-_{\chi((\mathbb{I}^{\bullet})_{\alpha\beta\gamma},(\mathbb{I}^{\bullet})_{\alpha\beta\gamma})}}=
\frac{(1- \sum_{i,j=1}^{r}w_{i}w^{-1}_{j})}{(1-t_{3})}\delta(t_{1})\delta(t_{2}).
\end{align}
and the $\textbf{T}$-character of $\mathcal{T}^{4}_{[I^{\bullet}]}$ in \eqref{chi-I} is obtained as:
\begin{equation}\label{edge4}
tr_{R-_{\chi((\mathbb{I}^{\bullet})_{\alpha\beta\gamma\delta},(\mathbb{I}^{\bullet})_{\alpha\beta\gamma\delta})}}=
(1- \sum_{i,j=1}^{r}w_{i}w^{-1}_{j})\delta(t_{1})\delta(t_{2})\delta(t_{3}).
\end{equation}
Based on above discussion, the $\mathcal{T}$-character of the virtual tangent space over a point is obtained as follows:
\begin{align}\label{virtan}
tr_{R-\chi(,\mathbb{I}^{\bullet})}=\sum_{\alpha}&tr_{R-_{\chi((\mathbb{I}^{\bullet})_{\alpha},(\mathbb{I}^{\bullet})_{\alpha})}}-\sum_{\alpha,\beta}tr_{R-_{\chi((\mathbb{I}^{\bullet})_{\alpha\beta},(\mathbb{I}^{\bullet})_{\alpha\beta})}}\notag\\
&+\sum_{\alpha,\beta,\gamma}tr_{R-_{\chi((\mathbb{I}^{\bullet})_{\alpha\beta\gamma},(\mathbb{I}^{\bullet})_{\alpha\beta\gamma})}}-\sum_{\alpha,\beta,\gamma,\delta}tr_{R-_{\chi((\mathbb{I}^{\bullet})_{\alpha\beta\gamma\delta},(\mathbb{I}^{\bullet})_{\alpha\beta\gamma\delta})}}.\notag\\
\end{align}
\subsubsection{Description of equivariant vertex}
The $\mathcal{T}$-character of the virtual tangent space in \eqref{virtan} is equal to the addition of vertex contributions (the first summand on right hand side of \eqref{virtan}) and the remaining edge contributions. Similar to discussions in \cite[Section 4.6]{a18}, we need to redistribute the terms in \eqref{trace1}, \eqref{trace2}, \eqref{edge3} and \eqref{edge4} so that they become Laurent polynomials in the variables $t_{i}$.
Let us define  
\begin{align}\label{G_alph}
\text{G}_{\alpha\beta}=\textbf{F}^{\mathcal{T}}_{\alpha\beta}(\sum_{j=1}^{r}w^{-1}_{j})&\cdot\overline{C^{n}_{\alpha\beta}}-\frac{\overline{\textbf{F}^{\mathcal{T}}_{\alpha\beta}}\cdot (\sum_{i=1}^{r}w_{i})\cdot  C^{n}_{\alpha\beta}}{t_{2}t_{3}}\notag\\
&
+\textbf{F}^{\mathcal{T}}_{\alpha\beta}\overline{\textbf{F}^{\mathcal{T}}_{\alpha\beta}}\frac{(1-t_{2})(1-t_{3})}{t_{2}t_{3}}+\frac{1-(\sum_{i,j=1}^{r}w_{i}w^{-1}_{j})\cdot C^{n}_{\alpha\beta}\overline{C^{n}_{\alpha\beta}}}{(1-t_{2})(1-t_{3})}.
\end{align}
then the edge character \eqref{trace2} is written with respect to $\text{G}_{\alpha\beta}$ as 

\begin{equation}\label{ascend-descend1}
\frac{\text{G}_{\alpha\beta}(t_{2},t_{3})}{1-t_{1}}+t^{-1}_{1}\frac{\text{G}_{\alpha\beta}(t_{2},t_{3})}{1-t_{1}}.
\end{equation}
Now we expand $\left(\frac{\text{G}_{\alpha\beta}(t_{2},t_{3})}{1-t_{1}}\right)$ in ascending powers of $t_{1}$ and expand $\left(t_{1}^{-1}\frac{\text{G}_{\alpha\beta}(t_{2},t_{3})}{1-t_{1}^{-1}}\right)$ in descending powers of $t_{1}$. Equation \eqref{ascend-descend1} is exactly the same as \cite[Equation (4.11)]{a18}.  Similarly define $$\text{G}_{\alpha\beta\gamma}=\frac{(1- \sum_{i,j=1}^{r}w_{i}w^{-1}_{j})}{(1-t_{3})}.$$Hence, \eqref{edge3} is rewritten as
\begin{equation}\label{ascend-descend}
\left(\frac{\text{G}_{\alpha\beta\gamma}(t_{3})}{1-t_{1}}+t_{1}^{-1}\frac{\text{G}_{\alpha\beta\gamma}(t_{3})}{1-t_{1}^{-1}}\right)\frac{1}{1-t_{2}}+t_{2}^{-1}\left(\frac{\text{G}_{\alpha\beta\gamma}(t_{3})}{1-t_{1}}+t_{1}^{-1}\frac{\text{G}_{\alpha\beta\gamma}(t_{3})}{1-t_{1}^{-1}}\right)\frac{1}{1-t^{-1}_{2}},
\end{equation} 
where we expand $\left(\frac{\text{G}_{\alpha\beta\gamma}(t_{3})}{1-t_{1}}\right)$ in ascending powers of $t_{1}$ and expand $\left(t_{1}^{-1}\frac{\text{G}_{\alpha\beta\gamma}(t_{3})}{1-t_{1}^{-1}}\right)$ in descending powers of $t_{1}$. We follow the same strategy and expand the first term in \eqref{ascend-descend} in ascending powers of $t_{2}$ and the second term in descending powers of $t_{2}$. Finally let$$\text{G}_{\alpha\beta\gamma\delta}=(1- \sum_{i,j=1}^{r}w_{i}w^{-1}_{j}),$$ and obtain a similar redistribution with respect to the ascending and descending powers of $t_{3}$. Now for each $\mathcal{U}_{\alpha}$ define a new vertex character (compare with \cite[Equation 4.12]{a18}):
\begin{equation}\label{local-vertex}
V_{\alpha}=tr_{R-_{\chi((\mathbb{I}^{\bullet,\mathcal{T}})_{\alpha},(\mathbb{I}^{\bullet,\mathcal{T}})_{\alpha})}}+\sum_{i=1}^{3}\frac{\text{G}_{\alpha\beta_{i}}(t_{i'},t_{i^{\prime\prime}})}{1-t_{i}}
\end{equation}
where $\beta_{1},\beta_{2},\beta_{3}$ are the three neighboring vertices and $(t_{i},t_{i'},t_{i^{\prime\prime}})=(t_{1},t_{2},t_{3}).$ Moreover redefine the edge character $\text{E}_{\alpha\beta}$ (compare with \cite[Section 4.6]{a18}):
\begin{equation}
\text{E}_{\alpha\beta}=t_{1}^{-1}\frac{\text{G}_{\alpha\beta}(t_{2},t_{3})}{1-t^{-1}_{1}}-\frac{\text{G}_{\alpha\beta}(t_{2}t_{1}^{-m_{\alpha\beta}},t_{3}t_{1}^{-m'_{\alpha\beta}})}{1-t_{1}^{-1}}
\end{equation} 
Here the integers $m_{\alpha\beta}$ and $m'_{\alpha\beta}$ are determined by the normal bundle $\mathcal{N}_{\mathcal{C}_{\alpha\beta}\slash X}$ to the supporting curve $\mathcal{C}_{\alpha\beta}:=\text{Supp}(F_{\alpha\beta})$ given by: $\mathcal{N}_{\mathcal{C}_{\alpha\beta}\slash X}=\mathcal{O}(m_{\alpha\beta})\oplus \mathcal{O}(m'_{\alpha\beta}).$ 
The edge contributions $\text{E}_{\alpha\beta\gamma}$ and $\text{E}_{\alpha\beta\gamma\delta}$ would have a similar description as above by replacing $\text{G}_{\alpha\beta}$ with $\text{G}_{\alpha\beta\gamma}$ and $\text{G}_{\alpha\beta\gamma\delta}$ and redistributing in ascending and descending powers of $t_{2}$ and $t_{3}$ respectively. According to the above redistributions, the $\mathcal{T}$-character of the virtual tangent space in \eqref{virtan} can be rewritten as:
\begin{align}\label{character-total} 
tr_{R-\chi(\mathbb{I}^{\bullet},\mathbb{I}^{\bullet})}=\sum_{\alpha}V_{\alpha}+\sum_{\alpha\beta}\text{E}_{\alpha\beta}
+\sum_{\alpha\beta\gamma}\text{E}_{\alpha\beta\gamma\delta}+\sum_{\alpha\beta\gamma\delta}\text{E}_{\alpha\beta\gamma\delta}
\end{align}
\begin{defn}\label{HFT-vertex}
Given a torus fixed component $\textbf{Q}^{k}$ of the moduli stack of highly frozen triples (here $k$ denotes the length of the zero dimensional cokernel sheaf associated to the highly frozen triples) denote $V_{\textbf{Q}^{k}}=\sum_{\alpha}V_{\alpha}$ where $V_{\alpha}$ are defined as in \eqref{local-vertex}. By discussions in \cite[Section 4.7]{a18} one defines the contribution (to the equivariant 3-fold vertex) of the locus $\textbf{Q}^{k}$ as the evaluation of $V_{\textbf{Q}^{k}}$ over $\textbf{Q}^{k}$, i.e:
\begin{equation}\label{eq-vertex}
w(\textbf{Q}^{k})={\displaystyle \int_{\textbf{Q}^{k}}e(\text{T}_{\textbf{Q}^{k}})e(-V_{\textbf{Q}^{k}})}.
\end{equation}
Hence, the equivariant Calabi-Yau vertex associated to the moduli scheme of highly frozen triples is defined as:
\begin{equation}\label{CY3vertex}
W^{\text{HFT}}_{\textbf{Q}}=\sum_{k}w(\textbf{Q}^{k}).q^{k}
\end{equation} 
\end{defn}
\subsubsection{Application to local $\mathbb{P}^{1}$ and more computations}\label{sec17}
First we identify the associated equivariant data of highly frozen triples using a theorem of Pandharipande-Thomas \cite[Proposition 1.8]{a18}.
\begin{prop}\label{identyf}
Consider a stable $\mathcal{T}$-equivariant highly frozen triple $\mathcal{O}^{\oplus r}_{X}(-n)^{\mathcal{T}}\xrightarrow{\phi^{\mathcal{T}}} F^{\mathcal{T}}$ of type $(r, P_{F})$. Let $\mathcal{C}$ be the one dimensional support of $F$. Now consider the finite length $\mathcal{T}$-equivariant cokernel $\textbf{Q}$ given by $\text{Coker}(\phi)^{\mathcal{T}}$. Then $\textbf{Q}\cong Q^{\textbf{T}}_{1}\oplus \cdots \oplus Q^{\textbf{T}}_{r}$ such that each $Q^{\textbf{T}}_{i}$ for $i=1,\cdots,r$ is given as a subsheaf of
\begin{equation}\label{quasi-coh}
\mathcal{H}=\varinjlim_{l} \left(\mathscr{H}om(\mathfrak{m}^{l},\mathcal{O}_{\mathcal{C}})/\mathcal{O}_{\mathcal{C}}\right). 
\end{equation}
In other words, the equivariant data of a stable $\mathcal{T}$-equivariant highly frozen triple with support curve $\mathcal{C}$ is equivalent to the data of a subsheaf of ``$r$" copies of $\mathcal{H}$ in \eqref{quasi-coh}.
\end{prop}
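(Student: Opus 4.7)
The plan is to reduce the proposition to the rank-one situation by means of the equivariant splitting, and then to match each rank-one summand with the well-known Pandharipande–Thomas characterization of a stable pair on a Cohen–Macaulay curve as a finite length subsheaf of $\mathcal{H}$.

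First I would invoke Remark \ref{finalword}, which gives a canonical $\textbf{G}$-weight decomposition
\[
\bigl[\mathcal{O}_{X}^{\oplus r}(-n)\xrightarrow{\phi^{\textbf{G}}} F\bigr]^{\textbf{G}}
\;\cong\;
\bigoplus_{i=1}^{r}\bigl[\mathcal{O}_{X}(-n)\xrightarrow{\phi_{i}} F_{i}\bigr]^{\textbf{T}},
\]
so that taking cokernels (an exact functor on this decomposition) yields $Q^{\textbf{G}}\cong\bigoplus_{i=1}^{r}Q_{i}^{\textbf{T}}$, with each $Q_{i}^{\textbf{T}}=\operatorname{Coker}(\phi_{i})$ finite length by Lemma \ref{lemma2}. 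This reduces the statement to proving that for each $i$ the sheaf $Q_{i}^{\textbf{T}}$ is naturally identified with a finite length subsheaf of $\mathcal{H}=\varinjlim_{l}\mathscr{H}om(\mathfrak{m}^{l},\mathcal{O}_{\mathcal{C}})/\mathcal{O}_{\mathcal{C}}$.

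Next I would analyze a single summand $\phi_{i}:\mathcal{O}_{X}(-n)\to F_{i}$. Since $F_{i}$ is pure of dimension one with support contained in $\mathcal{C}$, the morphism $\phi_{i}$ factors through $\mathcal{O}_{\mathcal{C}}(-n)$. The resulting map $\mathcal{O}_{\mathcal{C}}(-n)\to F_{i}$ is injective: its kernel is a torsion subsheaf of the torsion-free $\mathcal{O}_{\mathcal{C}}$-module $\mathcal{O}_{\mathcal{C}}(-n)$, hence zero. Twisting by $\mathcal{O}_{X}(n)$ (this is an isomorphism of categories at the level of $\textbf{T}$-equivariant sheaves once an equivariant structure on $\mathcal{O}_{X}(n)$ is fixed as in Section \ref{sec12}) replaces this diagram by a short exact sequence
\[
0\longrightarrow \mathcal{O}_{\mathcal{C}}\longrightarrow F_{i}(n)\longrightarrow Q_{i}^{\textbf{T}}\longrightarrow 0,
\]
in which the cokernel is unchanged because twisting by an invertible sheaf commutes with taking cokernels. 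Thus each summand is precisely a $\textbf{T}$-equivariant Pandharipande–Thomas stable pair on $X$ supported on the Cohen–Macaulay curve $\mathcal{C}$.

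The final step is the classical identification from \cite{a17}: since $\mathcal{C}$ is Cohen–Macaulay, the natural map $F_{i}(n)\hookrightarrow \varinjlim_{l}\mathscr{H}om(\mathfrak{m}^{l},\mathcal{O}_{\mathcal{C}})$ obtained by dualizing the section $\mathcal{O}_{\mathcal{C}}\to F_{i}(n)$ (and using purity to extend it across the finite length quotient) descends, modulo $\mathcal{O}_{\mathcal{C}}$, to an inclusion $Q_{i}^{\textbf{T}}\hookrightarrow \mathcal{H}$. The inverse operation is also explicit: starting from a finite length subsheaf $Q\subset\mathcal{H}$, its preimage under $\mathscr{H}om(\mathfrak{m}^{l},\mathcal{O}_{\mathcal{C}})\twoheadrightarrow \mathcal{H}$ for $l\gg0$ is a pure one-dimensional sheaf with a canonical section, recovering $F_{i}(n)$ together with $\phi_{i}(n)$. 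Combining this with the decomposition above yields the claimed identification.

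The main technical obstacle, in my view, is checking that the PT identification $Q\hookrightarrow \mathcal{H}$ is compatible with the $\textbf{T}$-equivariant structure inherited from the splitting; this is where the hypothesis that we work with a fixed equivariant structure on $\mathcal{O}_{X}(-n)$ (Section \ref{sec12}) is used in order to produce a canonical equivariant section $\mathcal{O}_{\mathcal{C}}\to F_{i}(n)$ whose dual gives the desired equivariant embedding into $\mathcal{H}$. Once this is verified the rest follows formally from Lemma \ref{lemma2} and Remark \ref{finalword}.
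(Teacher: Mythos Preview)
Your proof is correct and follows essentially the same route as the paper: you use the $\textbf{G}$-equivariant splitting from Remark \ref{finalword} to reduce to $r$ rank-one summands, then invoke the Pandharipande--Thomas identification (Proposition 1.8 of \cite{a17}) to realize each $Q_i^{\textbf{T}}$ as a finite-length subsheaf of $\mathcal{H}$. The only differences are cosmetic: the paper restricts directly to the supporting curve and cites \cite{a17} without the intermediate twist by $\mathcal{O}_X(n)$, and it does not dwell on the equivariance compatibility you flag, but the skeleton of the argument is the same.
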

\begin{proof} Since $\mathcal{O}^{\oplus r}_{X}(-n)^{\mathcal{T}}\rightarrow F^{\mathcal{T}}:=\bigoplus_{i=1}^{r}(\mathcal{O}^{\textbf{T}}_{X}(-n)\rightarrow F^{\textbf{T}}_{i}),$ each $\mathcal{O}^{\textbf{T}}_{X}(-n)\rightarrow F^{\textbf{T}}_{i}$ restricted to the supporting curve of $F_{i}$, is identified with $Q^{\textbf{T}}_{i}$ appearing in $0\rightarrow \mathcal{O}^{\textbf{T}}_{C}(-n)\rightarrow F^{\textbf{T}}_{i}\rightarrow Q^{\textbf{T}}_{i}\rightarrow 0$ and by \cite[Proposition 1.8]{a17} $Q_{i}$ is identified with a subsheaf of the quasi-coherent sheaf $\varinjlim_{l} \mathscr{H}om(\mathfrak{m}^{l},\mathcal{O}_{\mathcal{C}})/\mathcal{O}_{\mathcal{C}}.$ It is then seen that the cokernel of the original $\mathcal{T}$-equivariant highly frozen triple restricted to $\mathcal{C}$ and identified with $\bigoplus_{i=1}^{r}Q_{i}^{\textbf{T}}$, is  a subsheaf of the direct sum of $r$ copies of the quasi-coherent sheaf $\mathcal{H}$.
\end{proof}
Now  assume that $X$ is given as local $\mathbb{P}^{1}$. We use the combinatorial description of $X$, using the 3 dimensional Young tableaux diagrams as in \cite[Example 4.9]{a18}. There exists two affine patches covering $X$. The partitions associated to the Newton polyhedron of $X$ on each patch are given as three dimensional partitions with $\mu_{1}=(1),\mu_{2}=(0),\mu_{3}=(0)$ (compare with \cite[Example 4.9]{a18}). 

We compute the 1-legged equivariant vertex $W^{\text{HFT}}_{1,\emptyset,\emptyset}$ associated to the moduli scheme of highly frozen triples of type $(r,P_{F})$. This computation is the higher rank analog of the computation in \cite[Lemma 5]{a18}.

Let $\mathcal{U}_{\alpha},\mathcal{U}_{\beta}$ denote affine open charts over the divisors $0,\infty$ on the base $\mathbb{P}^{1}$ respectively. Let $\mathbb{C}^{*}$ act on $\mathbb{C}^{4}$ by $$t(x_{0},x_{1},x_{2},x_{3})=(tx_{0},tx_{1},t^{-1}x_{2},t^{-1}x_{3}).$$ We identify $X$ as a quotient $X\cong (\mathbb{C}^{4}\backslash Z)\slash \mathbb{C}^{*}$ where $Z\subset \mathbb{C}^{4}$ is obtained by setting $x_{0}=x_{1}=0$. Let $([x_{0}:x_{1}],x_{2},x_{3})$ denote the coordinates in $X$ where $[x_{0}:x_{1}]$ denote the homogeneous coordinates along the base $\mathbb{P}^{1}$ and $x_{2},x_{3}$ denote the fiber coordinates. Locally in the $\mathcal{U}_{\alpha}$ and $\mathcal{U}_{\beta}$ the defining coordinates are given as $(\frac{x_{1}}{x_{0}},x_{2}x_{0},x_{3}x_{0})$ and $(\frac{x_{0}}{x_{1}},x_{2}x_{1},x_{3}x_{1})$ respectively. 

Now denote the local coordinates over $\mathcal{U}_{\alpha}$ by $(\tilde{x}_{1},\tilde{x}_{2},\tilde{x}_{3})$ where $\tilde{x}_{1}=\frac{x_{1}}{x_{0}},\tilde{x}_{2}=x_{2}x_{0},\tilde{x}_{3}=x_{3}x_{0}$. Let $H\subset X$ denote the hyperplane obtained as the fiber of $X$ over $0\in \mathbb{P}^{1}$, i.e locally in $\mathcal{U}_{\alpha}$ by setting $\tilde{x}_{1}=0$. Throughout this calculation we fix the hyperplane $H$ as a choice of equivariant structure on $\mathcal{O}_{X}(1)$. Now consider the action of $\textbf{T}=\mathbb{C}^{3}$ on $X$ where locally over $\mathcal{U}_{\alpha}$ is given by $(\lambda_{1},\lambda_{2},\lambda_{3})\cdot \tilde{x}_{i}=\lambda_{i}\cdot \tilde{x}_{i}.$ 

We identify an action of $(\mathbb{C}^{*})^{2}$ on $X$ which preserves the Calabi-Yau form by considering a subtorus $\text{T}'\subset\textbf{T}$ such that $\text{T}'=\{(\lambda_{1},\lambda_{2},\lambda_{3})\in \textbf{T}\mid \lambda_{1}\lambda_{2}\lambda_{3}=1\}.$ Let $\tilde{t}_{1},\cdots,\tilde{t}_{3}$ denote the characters corresponding to the action of $\lambda_{i}$. Identify $\mathcal{O}_{X}(-1)\cong \mathcal{O}_{X}(-H)$. Then, locally over $\mathcal{U}_{\alpha}$, the Poincar{\'e} polynomial of $\mathcal{O}_{X}(-n)\mid_{\mathcal{U}_{\alpha}}$ is obtained as $$\frac{\tilde{t}_{1}^{n}}{(1-\tilde{t}_{1})(1-\tilde{t}_{2})(1-\tilde{t}_{3})},$$ and moreover, the Poincar{\'e} polynomial of $\mathcal{O}_{X}(-n)\mid_{\mathcal{U}_{\beta}}$ is obtained by:$$\frac{1}{(1-\tilde{t}_{1}^{-1})(1-(\tilde{t}_{2}\tilde{t}_{1}))(1-(\tilde{t}_{3}\tilde{t}_{1}))}.$$Note that in this case the correction terms $C^{n}_{\alpha}$ and $C^{n}_{\beta}$ in \eqref{trace1} (which depend on the choice of equivariant structure) are $\tilde{t}^{n}_{1}$ and $1$ respectively. Similarly, the $\textbf{T}$-character of the Poincar{\'e} polynomial of $\mathcal{O}_{X}(-n)\mid_{\mathcal{U}_{\alpha\beta}}$ is obtained as $$\left(\frac{1}{(1-\tilde{t}_{2})(1-\tilde{t}_{3})}\right)\delta(\tilde{t}_{1}).$$ Here the correction term $C^{n}_{\alpha\beta}$ in \eqref{trace2} is equal to 1. To compute the contributions in \eqref{local-vertex} we need to compute the trace characters in \eqref{trace1} over the two patches $\alpha$ and $\beta$ and the edge redistribution in \eqref{G_alph}. 
Let $\textbf{Q}^{k}$ denote the $\mathcal{T}$-fixed component of the moduli scheme of highly frozen triples over which the highly frozen triples $\mathcal{O}^{\oplus r}_{X}(-n)^{\mathcal{T}}\xrightarrow{\phi} F^{\mathcal{T}}$ satisfy the condition that $l(\text{Coker}(\phi)^{\mathcal{T}})=k$. By \eqref{character-total} the $\mathcal{T}$-equivariant vertex $V_{\textbf{Q}^{k}}$ is given by
\begin{align}\label{rank2vertex}
V_{\textbf{Q}^{k}}=\sum_{d_{1}+d_{2}=k}\Bigg((w^{-1}_{1}+w^{-1}_{2})\cdot\left(w_{1}\cdot \sum_{i=1}^{d_{1}}\tilde{t}^{-i-n}_{1}+w_{2}\cdot \sum_{i=1}^{d_{2}}\tilde{t}^{-i-n}_{1}\right)\notag\\
-(w_{1}+w_{2})\cdot\bigg(w^{-1}_{1}\cdot \sum_{i=0}^{d_{1}-1}\frac{\tilde{t}^{i+n}_{1}}{\tilde{t}_{2}\tilde{t}_{3}}
+w^{-1}_{2}\cdot \sum_{i=0}^{d_{2}-1}&\frac{\tilde{t}^{i+n}_{1}}{\tilde{t}_{2}\tilde{t}_{3}}\bigg)\Bigg)\notag\\
\end{align}
(compare with \cite[Lemma 5]{a18}). Now let $\text{s}_{i}$ for $i=1,2,3$ and $v_{j}$ for $j=1,\cdots, r$ denote the equivariant parameters corresponding to characters $\tilde{t}_{i}$ and $w_{j}$ respectively. By the definition of the equivariant vertex in \eqref{HFT-vertex} the coefficient of the degree $k$ term in the equivariant vertex in \eqref{CY3vertex} is obtained by the evaluation of the contribution of $V_{\textbf{Q}^{k}}$ on $\textbf{Q}^{k}$, i.e:
\begin{align}\label{lastnumber}
w(\textbf{Q}^{k})=&{\displaystyle{\int_{\textbf{Q}^{k}}}e(\text{T}_{\textbf{Q}^{k}})e(-V_{\textbf{Q}^{k}})}=
\displaystyle{\prod_{d_{1}+d_{2}=k}}\Bigg[\frac{\displaystyle{\prod_{j=1}^{r}}\bigg((-v_{j}+\prod_{l=1}^{r}v_{l})+\displaystyle{\prod_{i=1}^{d_{j}-1}}((i+n)s_{1})-(s_{2}+s_{3})\bigg)}{\displaystyle{\prod_{j=1}^{r}}\bigg((v_{j}+(-1)^{r-1}\prod_{l=1}^{r}v_{l})+\displaystyle{\prod_{i=1}^{d_{j}}}(-1)^{i}\cdot(i+n)s_{1}\bigg)}\Bigg].
\end{align}
\begin{remark}
Setting $v_{1}=v_{2}=1$ in \eqref{lastnumber} would result in the following equation:
\begin{align}\label{lastnumber3}
w(\textbf{Q}^{k})=
\displaystyle{\prod_{d_{1}+d_{2}=k}}\Bigg[\frac{\bigg(\displaystyle{\prod_{i=0}^{d_{1}-1}}((i+n)s_{1})-(s_{2}+s_{3})\bigg)\cdot\bigg(\displaystyle{\prod_{i=0}^{d_{2}-1}}((i+n)s_{1})-(s_{2}+s_{3})\bigg)}{\bigg(\displaystyle{\prod_{i=1}^{d_{1}}}(-1)^{i}\cdot(i+n)s_{1}\bigg)\cdot\bigg(\displaystyle{\prod_{i=1}^{d_{2}}}(-1)^{i}(i+n)s_{1}\bigg)}\Bigg].\notag\\
\end{align}
Now use the condition on Calabi-Yau torus and set $s_{1}+s_{2}+s_{3}=0$. This is equivalent to $ns_{1}-(s_{2}+s_{3})=-(n+1)(s_{2}+s_{3})$. Now use this fact to simplify \eqref{lastnumber3} and obtain the 1-legged equivariant vertex over local $\mathbb{P}^{1}$ associated to highly frozen triples:
 \begin{equation}\label{one-leg}
 W^{\text{HFT}}_{1,\emptyset,\emptyset}=\left((1+q)^{\frac{(n+1)(s_{2}+s_{3})}{s_{1}}}\right)^{r}
 \end{equation}
\end{remark}
\begin{remark}
\emph{The computation of equivariant vertex for more general local toric Calabi-Yau threefolds with outgoing partitions $<\mu_{1},\mu_{2},\mu_{3}>$ requires more detailed combinatorial calculations. However, one can compute the associated partition functions in those general cases if one fully understands the equivariant PT vertex in rank 1. In other words it is seen from our calculations that if the PT vertex with respect to the variable $q$ is given by $W^{P}_{<\mu_{1},\mu_{2},\mu_{3}>}=G(q)$ then the HFT equivariant vertex is obtained by $$W^{\text{HFT}}_{<\mu_{1},\mu_{2},\mu_{3}>}=(G(q))^{(n+1)(r)}.$$}    
\end{remark}

\bibliographystyle{amsalpha}
\bibliography{ref1}

\noindent {\tt{sheshmani.1@math.osu.edu}} \\
\noindent{\small Ohio State University, 600 Math Tower, 231 West 18th Avenue, Columbus, OH 43210-1174} \\

\end{document}